\providecommand\@dotsep{5}
\def\listtodoname{List of Todos}
\def\listoftodos{\@starttoc{tdo}\listtodoname}
\numberwithin{equation}{section}
\def\dis{\displaystyle}
\def\cal{\mathcal}
\newtheorem{lem}{Lemma}
\newtheorem{prop}{Proposition}
\newtheorem{theo}{Theorem}
\newtheorem{coro}{Corollary}
\newtheorem{rem}{Remark}
\DeclareMathOperator{\cat}{cat}
\title[Morse Theory for fractional Schr\"odinger equations]
{a Multiplicity result via Ljusternick-Schnirelmann category and morse theory for a fractional schr\"odinger equation in $\mathbb R^{N}$}
\author[G. M. Figueiredo]{Giovany M. Figueiredo}
\author[G. Siciliano]{Gaetano Siciliano}
\address[G. M. Figueiredo]{\newline\indent Faculdade de Matem\'atica
\newline\indent 
Universidade Federal do Par\'a
\newline\indent
66075-110, Bel\'em - PA, Brazil}
\email{\href{mailto:giovany@ufpa.br}{giovany@ufpa.br}}
\address[G. Siciliano]{\newline\indent Departamento de Matem\'atica
\newline\indent 
 Universidade de S\~ao Paulo 
\newline\indent 
Rua do Mat\~ao 1010,  05508-090 S\~ao Paulo, SP, Brazil }
\email{\href{mailto:sicilian@ime.usp.br}{sicilian@ime.usp.br}}
\thanks{Giovany M. Figueiredo was partially
supported by  CNPq/Brazil. Gaetano Siciliano  was partially supported by
Fapesp and CNPq, Brazil. }
\subjclass[2000]{35A15, 35S05, 58E05, 74G35}
\keywords{Fractional Laplacian, multiplicity of solutions, Ljusternick-Schnirelmann category, Morse theory}
\begin{document}

\maketitle

\begin{abstract}
In this work we study  the following class of
problems in $\mathbb R^{N}, N>2s$
$$
 \varepsilon^{2s} (-\Delta)^{s}u + V(z)u=f(u), \,\,\,
        u(z) > 0
$$
where $0<s<1$, $(-\Delta)^{s}$ is the fractional Laplacian, 
$\varepsilon$ is a positive parameter, the potential $V:\mathbb{R}^N \to\mathbb{R}$ 
and the nonlinearity $f:\mathbb R \to \mathbb R$ satisfy suitable assumptions;
in particular it is assumed that $V$ achieves its positive minimum on some set $M.$
By using variational methods we prove existence, multiplicity and
concentration of maxima
of positive solutions when $\varepsilon\to 0^{+}$.
In particular the multiplicity result
is obtained by means of the Ljusternick-Schnirelmann and Morse theory,
by exploiting the  ``topological complexity'' of the set $M$.
\end{abstract}

\section{Introduction}

In this paper we are concerned with  existence, multiplicity
and concentration results for the solutions of the following class of  problems
\begin{equation}\label{problema}\tag{$P_{\varepsilon}$}
\left\{
             \begin{array}{l}
              \varepsilon^{2s}(-\Delta)^{s}u + V(z)u=f(u)
               \ \ \text{in} \ \ \mathbb{R}^N, \quad N>2s\medskip\\ 
        u \in H^{s}(\mathbb R^N)\medskip
        \\
        u(z) > 0,  z \in \mathbb R^N ,
             \end{array}
           \right.
\end{equation}
where  $s \in (0,1)$, $\varepsilon > 0$ and  the Hilbert space $H^s(\mathbb R^N)$ is defined as
\[
H^s(\mathbb R^N)=\big\{u\in L^2(\mathbb R^N):\,
(-\Delta)^{s/2}u \in L^2(\mathbb R^N)\big\}
\]
endowed with scalar product and (squared) norm given by
\[
(u,v)= \int_{\mathbb R^{N}} (-\Delta)^{s/2}u (-\Delta)^{s/2}v + \int_{\mathbb R^{N}} uv,
\qquad
\|u\|^2=\|(-\Delta)^{s/2}u \|_2^2+ \|u\|_2^2.
\]
The fractional Laplacian $(-\Delta)^s$ is the pseudodifferential operator   defined via the Fourier transform
$$\mathcal F((-\Delta)^{s}u)=|\cdot|^{2s}\mathcal Fu,$$
and, when $u$ has sufficient regularity, it is also given by
\[
(-\Delta)^su(z)=-\frac{C(N,s)}{2}\int_{\mathbb R^{N}} \frac{u(z+y)-u(z-y)-2u(z)}{|y|^{N+2s}}dy, \quad z\in\mathbb R^N,
\]
where $C(N,s)$ is a suitable normalization constant.  For this fact and the relation between
the fractional Laplacian and the fractional Sobolev space $H^{s}(\mathbb R^{N})$
we refer the reader to classical books, see also  \cite{DPV}.

Problem \eqref{problema}  appears when one look for standing waves solutions 
$$\psi(z,t)=u(z)e^{-iE t/\varepsilon}, \quad u(z)\in \mathbb R,\ E \text{ a real constant}$$
to the following Fractional Schr\"odinger equation 
$$i\varepsilon\frac{\partial \psi}{\partial t}=\varepsilon^{2s}(-\Delta)^{s}\psi +W(z)\psi -f(|\psi|)$$
where $W: \mathbb R^{N}\to \mathbb R$ is an external potential and $f$ a suitable nonlinearity.
Here $\varepsilon$ is a sufficiently small parameter which corresponds to the Planck constant.

The fractional Schr\"odinger equation was first derived and studied by Laskin \cite{L1,L2,L3}.
After that many papers appeared studying existence, multiplicity and behavior of solutions
to fractional Schr\"odinger equations.
Recently in \cite{CZ}  the authors studied, by means of  Lyapunov-Schmidt reduction methods, concentration phenomenon 
 for solutions 
 in presence of a potential
 and with a power type nonlinearity. In particular it is shown that 
 for sufficiently small $\varepsilon$ the solutions concentrates to non-degenerate critical points
 of the potential. Concentration of solutions is also studied in \cite{SZ} where the authors consider
 the nonlinearity $f(x,u)=K(x)|u|^{p-2}u$ and prove the  concentration near suitable critical 
 points of a function $\Gamma (x)$ which involves the potential $V$ and the function $K.$
 We also mention  \cite{FMV} where it is shown that  concentration can occur only  at critical points of $V$.
 

\medskip

We recall also that in recent years, problems involving fractional  operators are receiving a special attention.
Indeed fractional spaces and nonlocal equations have important applications in many sciences.
We limit here ourself to give a non-exhaustive list of fields and papers in which these equations are used:
obstacle problem \cite{MS,S},
optimization and finance \cite{CT,DL}, phase transition \cite{ABS,SV}, 
material science \cite{B}, anomalous diffusion \cite{GM, MK1, MK2}, 
conformal geometry and minimal surfaces
\cite{CRS,CV,CG}.
The list may continue with applications in crystal dislocation, soft thin films, multiple scattering,
quasi-geostrophic flows, water waves, and so on. 
The interested reader may consult also the references in the cited papers.

\medskip

Coming back to our problem \eqref{problema}, in order to state the results we introduce the basics assumptions on $f$ and $V$:
\medskip

\begin{enumerate}[label=(V\arabic*),ref=V\arabic*]
\item\label{V} $V:\mathbb R^{N}\to \mathbb R$ is a continuous function and satisfies
$$
0<\min_{\mathbb R^N}V(x)=:V_{0}<\liminf_{|x|\rightarrow\infty}V(x)=: V_{\infty}\in (0,+\infty] \, ;
$$
\end{enumerate}

\begin{enumerate}[label=(f\arabic*),ref=f\arabic*,start=1]
\item\label{f_{1}} $f: \mathbb R \rightarrow \mathbb R $ is a function of class $C^{1}$ and
$f(u)=0$ for $u\leq0$; \medskip
\item\label{f_{2}} $\lim_{u \rightarrow 0} {f'(u)}=0$; \medskip 
\item\label{f_{3}} $ \exists \,q\in  (2, 2^{*}_{s}-1)$ such that $\lim_{u\rightarrow \infty}{f'(u)}/{u^{q-1}}=0$,
 where $2_{s}^{*}:=2N/(N-2s)$; \medskip
\item\label{f_{4}}  $ \exists\,  \theta>2$ such that $0<\theta F(u):=\theta \int^{u}_{0}f(t)dt
 \leq uf(u)$ for all $ u>0$; \medskip 
\item\label{f_{5}}  the function $ u\to f(u)/u$ is strictly increasing in $(0,+\infty).$
\end{enumerate}
\medskip

By a solution of \eqref{problema} we mean $u\in W_{\varepsilon}$ (see Section \ref{preliminari} for the definition of $W_{\varepsilon}$) such that
for every $ v\in W_{\varepsilon}$
$$ \varepsilon^{2s}\int_{\mathbb R^{N} } (-\Delta)^{s/2}u(-\Delta)^{s/2}v+\int_{\mathbb R^{N}}V(z)uv=\int_{\mathbb R^{N}}f(u)v$$
that is, as we will see, $u$ is a critical point of a suitable energy functional $I_{\varepsilon}$.
The solution with ``minimal energy''  is what we call a {\sl ground state}.

The assumptions  on $V$  and $f$ are quite natural in this context.
Assumption \eqref{V} was first introduced by Rabinowitz in \cite{Rabinowitz}
to take into account potentials which are possibly not coercive.
Hypothesis \eqref{f_{1}} is not restrictive since we are looking for positive solutions
(see e.g. \cite[pag. 1247]{FQT})
and \eqref{f_{2}}-\eqref{f_{5}} are useful to use variational teqniques
which involve  the Palais-Smale condition, the Mountain Pass Theorem and
the Nehari manifold.
To this aim we recall that $\{u_{n}\}$ is a Palais-Smale sequence for  a $C^{1}$ functional, let us say $I$,
at level $c\in \mathbb R$, if $I(u_{n})\to c$ and $I'(u_{n})\to 0.$
We will abbreviate this simply by saying that $\{u_{n}\}$ is a $(PS)_{c}$ sequence.
Moreover the functional $I$ is said to satisfy the Palais-Smale condition at level $c$,
if every $(PS)_{c}$ sequence has a (strongly) convergent subsequence.

Our first result concerns the existence of ground states solutions.

\begin{theo}\label{gio12}
Suppose that $f$ verifies \eqref{f_{1}}-\eqref{f_{5}} and $V$ verifies
\eqref{V}. Then there exists a ground state solution $\mathfrak u_{\varepsilon}\in W_{\varepsilon}$
of \eqref{problema}, 
\begin{itemize}
\item[1.] for every $\varepsilon>0$, if  $V_{\infty}=+\infty;$ \medskip
\item[2.] for every  $\varepsilon\in (0,\bar\varepsilon]$, for some $\bar\varepsilon>0$, if $V_{\infty}<+\infty$.
\end{itemize} 
\end{theo}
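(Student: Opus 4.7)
The plan is to realize the ground state as a minimizer of $I_{\varepsilon}$ on the associated Nehari manifold, using that for this class of functionals the Nehari and mountain pass levels coincide. Hypotheses \eqref{f_{1}}--\eqref{f_{3}}, together with the continuous fractional Sobolev embedding $H^{s}(\mathbb R^{N})\hookrightarrow L^{p}(\mathbb R^{N})$ for $p\in[2,2^{*}_{s}]$, yield the mountain pass geometry of $I_{\varepsilon}$; assumption \eqref{f_{5}} guarantees that
\begin{equation*}
\mathcal N_{\varepsilon}=\bigl\{u\in W_{\varepsilon}\setminus\{0\}:I'_{\varepsilon}(u)u=0\bigr\}
\end{equation*}
is a $C^{1}$ manifold and that $c_{\varepsilon}:=\inf_{\mathcal N_{\varepsilon}}I_{\varepsilon}$ equals the mountain pass level; finally the Ambrosetti--Rabinowitz condition \eqref{f_{4}} bounds every $(PS)_{c_{\varepsilon}}$ sequence in $W_{\varepsilon}$. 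The rest is a compactness matter.

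In case \textbf{1} ($V_{\infty}=+\infty$) the potential is coercive, so the embedding $W_{\varepsilon}\hookrightarrow L^{p}(\mathbb R^{N})$ is compact for $p\in[2,2^{*}_{s})$. Combined with the subcritical growth \eqref{f_{2}}--\eqref{f_{3}}, this makes any bounded $(PS)_{c_{\varepsilon}}$ sequence strongly convergent, producing a critical point $\mathfrak u_{\varepsilon}$ at level $c_{\varepsilon}$ for every $\varepsilon>0$.

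In case \textbf{2} ($V_{\infty}<+\infty$) the compactness threshold is governed by the autonomous limiting problem $(-\Delta)^{s}w+V_{\infty}w=f(w)$, whose ground state energy I denote by $c_{\infty}>0$: by the fractional Lions concentration-compactness lemma, a non-convergent $(PS)_{c_{\varepsilon}}$ sequence must lose at least one bubble of mass $\geq c_{\infty}$, so compactness is restored as soon as $c_{\varepsilon}<c_{\infty}$. To establish this strict inequality for small $\varepsilon$, I would compare $c_{\varepsilon}$ with the ground state level $c_{V_{0}}$ of the autonomous problem with constant potential $V_{0}=\min V$: from \eqref{f_{5}} the autonomous ground state energy is strictly increasing in the constant potential, hence $c_{V_{0}}<c_{\infty}$. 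Taking a positive ground state $w_{0}$ at level $c_{V_{0}}$, fixing $z_{0}\in M$ with $V(z_{0})=V_{0}$, projecting the translate $w_{0}((\cdot-z_{0})/\varepsilon)$ onto $\mathcal N_{\varepsilon}$ and using continuity of $V$ together with dominated convergence for the nonlocal kinetic term, one gets $\limsup_{\varepsilon\to 0^{+}}c_{\varepsilon}\leq c_{V_{0}}<c_{\infty}$, hence $c_{\varepsilon}<c_{\infty}$ for all $\varepsilon$ in some interval $(0,\bar\varepsilon]$, giving the existence of $\mathfrak u_{\varepsilon}$.

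The main obstacle is precisely this strict energy inequality in case \textbf{2}: unlike the local problem, the autonomous fractional ground state is not known in closed form, so the comparison rests on polynomial decay estimates for $w_{0}$ and on a careful handling of the nonlocal seminorm under translations and Nehari projection (the $(-\Delta)^{s/2}$ norm does not split over disjoint supports, unlike the Dirichlet integral, so standard cutoff arguments must be replaced by global $L^{2}$-estimates on the double integral representation). Positivity of $\mathfrak u_{\varepsilon}$ in either case follows by testing the equation against $u^{-}=\min\{u,0\}$, using \eqref{f_{1}} to conclude $\mathfrak u_{\varepsilon}\geq 0$, and then applying the fractional strong maximum principle to upgrade this to $\mathfrak u_{\varepsilon}>0$.
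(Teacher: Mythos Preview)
Your overall strategy coincides with the paper's: mountain pass geometry, boundedness of $(PS)$ sequences via \eqref{f_{4}}, compact embedding when $V_{\infty}=+\infty$, and in the non-coercive case the key inequality $c_{\varepsilon}<m(V_{\infty})$ obtained by testing with a suitable profile concentrated near a minimum point of $V$. So the architecture is right.

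There is, however, a genuine gap in your execution of case~2. You propose to use the \emph{untruncated} ground state $w_{0}$ of the $V_{0}$-problem as test function and to pass to the limit by dominated convergence. Two problems arise. First, hypothesis \eqref{V} only prescribes $\liminf_{|x|\to\infty}V(x)=V_{\infty}<\infty$; nothing prevents $V$ from being unbounded from above, so the translate of $w_{0}$ need not even belong to $W_{\varepsilon}$, and there is no integrable majorant for $V(\varepsilon y+z_{0})\,w_{0}^{2}(y)$. Second, the term that actually requires care is the potential term, not the kinetic one (the latter is translation-invariant and does not depend on $\varepsilon$ at all), so invoking dominated convergence ``for the nonlocal kinetic term'' misidentifies the difficulty.

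The paper resolves this with two simple devices that you dismissed prematurely. It fixes an intermediate level $\mu\in(V_{0},V_{\infty})$, takes the ground state $\mathfrak w_{\mu}$, and \emph{truncates} it by a smooth cutoff $\eta_{r}$ supported in $B_{2r}$. Since $\eta_{r}\mathfrak w_{\mu}\to\mathfrak w_{\mu}$ in $H^{s}(\mathbb R^{N})$ (this is standard for $H^{s}$, so your claim that ``standard cutoff arguments must be replaced'' is not warranted), the Nehari-projected energy $E_{\mu}(t_{r}\eta_{r}\mathfrak w_{\mu})$ is eventually below $m(V_{\infty})$. Fixing such an $r$, the test function $\phi$ has \emph{compact support}, so continuity of $V$ at $0\in M$ gives $V(\varepsilon x)\le\mu$ on $\mathrm{supp}\,\phi$ for all small $\varepsilon$, whence $I_{\varepsilon}(t\phi)\le E_{\mu}(t\phi)$ for every $t>0$ and $c_{\varepsilon}\le E_{\mu}(\phi)<m(V_{\infty})$. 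The intermediate $\mu$ is what absorbs the small energy increase caused by truncation; the compact support is what makes the comparison with $I_{\varepsilon}$ a pointwise inequality rather than a limit, sidestepping any boundedness assumption on $V$.
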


\medskip

The next results deal with the multiplicity of solutions and they involve
topological properties 
of the set of minima of the potential
$$
M:=\Big\{x\in\mathbb R^{N}:V(x)=V_{0}\Big\}.
$$
Indeed by means of the Ljusternik-Schnirelman theory 
we arrive at the following result.
\begin{theo}\label{gioGae1}
Suppose that $f$ satisfies \eqref{f_{1}}-\eqref{f_{5}} and the function $V$
satisfies \eqref{V}. Then, there exists $\varepsilon^{*}>0$ 
such that for every $\varepsilon\in (0,\varepsilon^{*}]$
problem \eqref{problema} has at least
\begin{itemize}
\item[1.] $\cat(M)$ positive solutions;\medskip
\item[2.] $\cat(M)+1$ positive solutions, if $M$ is bounded and $\cat (M)>1$.
\end{itemize}
Moreover,
for any such a solution $w_{\varepsilon}$, 
if  $\eta_{\varepsilon}\in \mathbb R^{N}$ denotes its
global maximum, it holds
$$
\lim_{\varepsilon\rightarrow 0^{+}}V(\eta_{\varepsilon})=V_{0}.
$$
\end{theo}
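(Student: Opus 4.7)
The strategy I would follow is the classical Benci-Cerami-Passaseo photography/barycenter argument on the Nehari manifold, together with a Morse-theoretic refinement to obtain the extra solution. Denote by $\mathcal N_\varepsilon\subset W_\varepsilon$ the Nehari manifold associated to $I_\varepsilon$ and by $c_\varepsilon=\inf_{\mathcal N_\varepsilon}I_\varepsilon$ the ground state level, attained in view of Theorem \ref{gio12}. Alongside $I_\varepsilon$ one introduces the \emph{autonomous limit problem}
\[
(-\Delta)^s u + V_0\, u = f(u), \qquad u\in H^s(\mathbb R^N),
\]
with functional $I_0$, Nehari manifold $\mathcal N_0$, ground state level $c_0$ and positive radial ground state $\omega$, which one should have studied in preliminary sections. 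A crucial ingredient is $c_\varepsilon\to c_0$ as $\varepsilon\to 0^+$: the bound $\limsup c_\varepsilon\leq c_0$ is obtained by testing $I_\varepsilon$ with a cut-off, translated and rescaled copy of $\omega$ anchored at any $y\in M$, and the reverse inequality follows from a splitting/concentration-compactness lemma for $(-\Delta)^s$. These same tools give a local Palais-Smale condition for $I_\varepsilon|_{\mathcal N_\varepsilon}$ below any level $c_0+h(\varepsilon)$ with $h(\varepsilon)\to 0$.

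For the first multiplicity statement, define the photography
\[
\Phi_\varepsilon:M\to \mathcal N_\varepsilon, \qquad \Phi_\varepsilon(y)(z)=t_\varepsilon(y)\,\eta(|\varepsilon z-y|)\,\omega\!\left(\frac{\varepsilon z-y}{\varepsilon}\right),
\]
with $\eta$ a standard radial cutoff and $t_\varepsilon(y)>0$ the (unique) projection onto $\mathcal N_\varepsilon$. A direct computation gives $\sup_{y\in M}|I_\varepsilon(\Phi_\varepsilon(y))-c_0|\to 0$. Dually, one introduces a barycenter
\[
\beta(u)=\frac{\int_{\mathbb R^N}\chi(\varepsilon z)\,u(z)^2\,dz}{\int_{\mathbb R^N}u(z)^2\,dz},
\]
where $\chi$ extends the identity near $M$, and proves by contradiction, again via the splitting lemma, that for every $\delta>0$ one has $\beta(u)\in M_\delta:=\{z:\dist(z,M)<\delta\}$ whenever $u\in I_\varepsilon^{c_0+h(\varepsilon)}\cap \mathcal N_\varepsilon$ and $\varepsilon$ is small. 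Then $\beta\circ \Phi_\varepsilon$ is homotopic to the inclusion $M\hookrightarrow M_\delta$, hence
\[
\cat\bigl(I_\varepsilon^{c_0+h(\varepsilon)}\cap \mathcal N_\varepsilon\bigr)\geq \cat_{M_\delta}(M)=\cat(M),
\]
and standard Ljusternik-Schnirelmann theory on the natural constraint $\mathcal N_\varepsilon$ yields $\cat(M)$ critical points of $I_\varepsilon|_{\mathcal N_\varepsilon}$; assumption \eqref{f_{1}} and a maximum principle for $(-\Delta)^s$ upgrade them to positive solutions of \eqref{problema}.

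The extra solution in the case when $M$ is bounded and $\cat(M)>1$ is recovered by Morse theory. The homotopies above force the sublevel $I_\varepsilon^{c_0+h(\varepsilon)}\cap \mathcal N_\varepsilon$ to topologically dominate $M$, so that the Poincar\'e polynomial satisfies
\[
P_t\bigl(I_\varepsilon^{c_0+h(\varepsilon)}\cap \mathcal N_\varepsilon\bigr)\geq P_t(M)+t\,Q(t)
\]
for some polynomial $Q$ with nonnegative integer coefficients. The Morse relations, in the spirit of Benci-Cerami and Cingolani-Vannella, then force at least $2\cat(M)-1\geq \cat(M)+1$ distinct critical points, possibly after a harmless generic perturbation of $f$ ensuring nondegeneracy (or, equivalently, working with critical groups).

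Finally, for the concentration statement, pick any solution $w_\varepsilon$ produced above, let $\eta_\varepsilon$ be a global maximum and set $v_\varepsilon(z):=w_\varepsilon(\varepsilon z+\eta_\varepsilon)$. A uniform-in-$\varepsilon$ $L^\infty$ bound, obtained by Moser iteration for the fractional Laplacian, together with the splitting lemma, yields along a subsequence $v_\varepsilon\to v$ in $H^s(\mathbb R^N)$, with $v$ a positive ground state of the autonomous equation with potential $\widetilde V:=\lim V(\eta_\varepsilon)$; the energy identity $c_\varepsilon\to c_0$ then forces $\widetilde V=V_0$. The main obstacles I anticipate are two: (i) the splitting/compactness analysis, which for $(-\Delta)^s$ requires careful control of the Gagliardo bilinear form under cutoff and translation; and (ii) the Morse count on the Nehari sublevels, which needs a clean correspondence between the category, cup-length and Poincar\'e polynomial of $I_\varepsilon^{c_0+h(\varepsilon)}\cap \mathcal N_\varepsilon$ and those of $M$ itself.
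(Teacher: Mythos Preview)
Your approach to the $\cat(M)$ lower bound via the photography/barycenter machinery and to the concentration of maxima via Moser iteration plus a compactness/splitting argument is essentially the same as the paper's (Section~\ref{Bary} and Section~\ref{TH2}), so those parts are fine.

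The genuine gap is in how you obtain the \emph{extra} solution when $M$ is bounded and $\cat(M)>1$. You invoke Morse theory and Poincar\'e polynomials to produce ``at least $2\cat(M)-1\geq \cat(M)+1$'' critical points, possibly after a perturbation ensuring nondegeneracy. This is not the route the paper takes for Theorem~\ref{gioGae1}, and more importantly it does not prove the statement as written: Theorem~\ref{gioGae1} asserts $\cat(M)+1$ \emph{distinct} solutions with no nondegeneracy hypothesis and no ``counted with multiplicity'' caveat. The Morse-theoretic count you sketch is exactly what the paper uses for Theorem~\ref{gioGae2}, where nondegeneracy is explicitly assumed and multiplicities are allowed; it does not yield the clean $\cat(M)+1$ bound of Theorem~\ref{gioGae1}. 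Also, the Morse inequalities are naturally phrased in terms of $\mathcal P_{1}(M)$, not $\cat(M)$, so the bound ``$2\cat(M)-1$'' is not what the relations give.

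What the paper actually does for the additional solution is a direct topological argument on the Nehari manifold, following Benci--Cerami. Since $\cat(M)>1$, the image $\mathcal A:=\Phi_{\varepsilon}(M)$ is not contractible in $\mathcal N_{\varepsilon}^{m(V_{0})+h(\varepsilon)}$. One then chooses $u^{*}\geq 0$ with $I_{\varepsilon}(t_{\varepsilon}(u^{*})u^{*})>m(V_{0})+h(\varepsilon)$, forms the compact contractible cone $\mathfrak C=\{tu^{*}+(1-t)u:t\in[0,1],\ u\in\mathcal A\}$, projects it radially onto $\mathcal N_{\varepsilon}$, and sets $c:=\max_{t_{\varepsilon}(\mathfrak C)}I_{\varepsilon}>m(V_{0})+h(\varepsilon)$. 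Then $\mathcal A$ is contractible in $\mathcal N_{\varepsilon}^{c}$ but not in $\mathcal N_{\varepsilon}^{m(V_{0})+h(\varepsilon)}$; since $I_{\varepsilon}$ satisfies Palais--Smale in this range, there must be a critical level strictly between $m(V_{0})+h(\varepsilon)$ and $c$, giving the $(\cat(M)+1)$-th solution without any nondegeneracy assumption.
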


Hereafter $\cat_{Y}(X)$ denotes the Ljusternick-Schnirelmann category
of the set $X$ in $Y$ (if $X=Y$ we just write $\cat(X)$).
On the other hand, with the use of  Morse theory
we are able to deduce the next result.

\begin{theo}\label{gioGae2}
Suppose that $f$ satisfies \eqref{f_{1}}-\eqref{f_{5}} and the function $V$
satisfies \eqref{V}. Then there exists $\varepsilon^{*}>0$ such that for every $\varepsilon\in (0,\varepsilon^{*}]$ problem \eqref{problema}
 has at least $2\mathcal P_{1}(M)-1$ solutions,
if non-degenerate, possibly counted with their multiplicity.
\end{theo}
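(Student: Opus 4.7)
The plan is to apply Morse theory to the $C^2$ functional $I_\varepsilon$ restricted to the Nehari manifold $\mathcal{N}_\varepsilon\subset W_\varepsilon$, leveraging the ``photography--barycenter'' construction underlying Theorem \ref{gioGae1}. The crucial ingredient is that $\mathcal{N}_\varepsilon$ is contractible, which via a long exact sequence relates the relative Poincar\'e polynomial of the pair to the absolute one, producing the characteristic factor $2$. More precisely, from the proof of Theorem \ref{gioGae1} we dispose, for $\varepsilon$ small, of continuous maps $\Phi_\varepsilon:M\to I_\varepsilon^{c_\varepsilon+h(\varepsilon)}$ and $\beta_\varepsilon:I_\varepsilon^{c_\varepsilon+h(\varepsilon)}\to M_\delta$ (with $h(\varepsilon)\to 0^{+}$) whose composition is homotopic to the inclusion $M\hookrightarrow M_\delta$; here $I_\varepsilon^c$ denotes the sublevel set of $I_\varepsilon|_{\mathcal{N}_\varepsilon}$ at level $c$, and $M_\delta$ a small tubular neighborhood of $M$. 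This forces $(\Phi_\varepsilon)_{*}$ to be injective on homology, and one obtains
$$\mathcal{P}_t\bigl(I_\varepsilon^{c_\varepsilon+h(\varepsilon)}\bigr)=\mathcal{P}_t(M)+Z(t)$$
for some polynomial $Z$ with nonnegative integer coefficients. Since $\mathcal{N}_\varepsilon$ is radially homeomorphic to the unit sphere of the infinite-dimensional Hilbert space $W_\varepsilon$, it is contractible; inserting this into the long exact sequence of the pair $\bigl(\mathcal{N}_\varepsilon,I_\varepsilon^{c_\varepsilon+h(\varepsilon)}\bigr)$ collapses it to $H_q\bigl(\mathcal{N}_\varepsilon,I_\varepsilon^{c_\varepsilon+h(\varepsilon)}\bigr)\cong H_{q-1}\bigl(I_\varepsilon^{c_\varepsilon+h(\varepsilon)}\bigr)$ for $q\geq 2$, and a direct bookkeeping at $q=0,1$ gives the identity
$$\mathcal{P}_t\bigl(\mathcal{N}_\varepsilon,I_\varepsilon^{c_\varepsilon+h(\varepsilon)}\bigr)\;=\;t\,\mathcal{P}_t\bigl(I_\varepsilon^{c_\varepsilon+h(\varepsilon)}\bigr)-t.$$

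Writing the Morse relations separately for critical points in $I_\varepsilon^{c_\varepsilon+h(\varepsilon)}$ and for those of energy strictly larger than $c_\varepsilon+h(\varepsilon)$ --- both valid since $I_\varepsilon$ is $C^2$ (thanks to $f\in C^1$ in \eqref{f_{1}}) and satisfies $(PS)$ at every level for $\varepsilon$ small, by the same compactness analysis that drives Theorem \ref{gio12} --- and summing them with the displayed identity yields
$$\sum_{u\in K}i_t(u)\;=\;\mathcal{P}_t\bigl(I_\varepsilon^{c_\varepsilon+h(\varepsilon)}\bigr)+\mathcal{P}_t\bigl(\mathcal{N}_\varepsilon,I_\varepsilon^{c_\varepsilon+h(\varepsilon)}\bigr)+(1+t)\mathcal{Q}(t)\;=\;(1+t)\,\mathcal{P}_t\bigl(I_\varepsilon^{c_\varepsilon+h(\varepsilon)}\bigr)-t+(1+t)\mathcal{Q}(t),$$
where $K$ is the set of critical points of $I_\varepsilon$ on $\mathcal{N}_\varepsilon$ and $\mathcal{Q}$ has nonnegative integer coefficients. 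Evaluating at $t=1$ and combining with the photography estimate,
$$\sum_{u\in K}i_1(u)\;\geq\;2\,\mathcal{P}_1\bigl(I_\varepsilon^{c_\varepsilon+h(\varepsilon)}\bigr)-1\;\geq\;2\,\mathcal{P}_1(M)-1.$$
Under the non-degeneracy assumption, $i_t(u)=t^{m(u)}$ and hence $i_1(u)=1$, so this lower bound counts at least $2\mathcal{P}_1(M)-1$ distinct critical points, each of which corresponds to a positive solution of \eqref{problema} thanks to \eqref{f_{1}}.

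The main technical obstacle will be securing the $(PS)_{c}$ condition on $\mathcal{N}_\varepsilon$ at \emph{every} relevant level $c$ --- not just near the ground-state level $c_\varepsilon$ --- since the Morse relations on the pair $\bigl(\mathcal{N}_\varepsilon,I_\varepsilon^{c_\varepsilon+h(\varepsilon)}\bigr)$ depend on compactness above that level; this forces a choice of $\varepsilon^{*}$ small enough that the topological inputs of Theorem \ref{gioGae1} and the global compactness coexist. The residual issues --- the $C^2$ structure of $I_\varepsilon$ on $\mathcal{N}_\varepsilon$, and positivity of the critical points --- are handled respectively by $f\in C^1$ and by the usual truncation argument based on $f(u)=0$ for $u\leq 0$. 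Finally, in the degenerate case $i_1(u)$ is reinterpreted as the total dimension of the critical groups of $u$, yielding the ``counted with multiplicity'' form of the statement.
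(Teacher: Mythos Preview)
Your proposal is correct and follows the same Benci--Cerami scheme as the paper, but packages the Morse-theoretic computation differently. You work directly on the Nehari manifold $\mathcal N_\varepsilon$: from its contractibility you derive $\mathcal P_t(\mathcal N_\varepsilon,\mathcal N_\varepsilon^{a})=t\,\mathcal P_t(\mathcal N_\varepsilon^{a})-t$ via the long exact sequence, then add the two Morse relations. The paper instead lifts everything to $W_\varepsilon$ through the transfer identity $\mathcal P_t(I_\varepsilon^{a},I_\varepsilon^{r})=t\,\mathcal P_t(\mathcal N_\varepsilon^{a})$ (their Lemma~5.2 analogue), obtaining $\mathcal P_t(W_\varepsilon,I_\varepsilon^{b_\varepsilon^*})=t^2(\mathcal P_t(M)+\mathcal Q(t)-1)$ and summing the contributions below and above the regular level $b_\varepsilon^*$ to reach $\sum_u\mathcal I_t(u)=t\mathcal P_t(M)+t^2(\mathcal P_t(M)-1)+t(1+t)\mathcal Q(t)$. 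The two computations are equivalent up to the extra factor $t$ coming from the negative Nehari direction; your route is slightly more direct and avoids the transfer lemma.

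Two points to tighten. First, the level $c_\varepsilon+h(\varepsilon)$ should be replaced by a nearby \emph{regular} value (the paper's $b_\varepsilon^*$), otherwise the Morse relations for the pair are not immediately applicable. Second, the implication ``non-degenerate $\Rightarrow$ $i_t(u)=t^{m(u)}$'' in infinite dimensions requires $I_\varepsilon''(u)$ to be a Fredholm operator of index zero; the paper verifies this explicitly by showing that the operator $v\mapsto \int f'(u)v\,(\cdot)$ is compact on $W_\varepsilon$, which is precisely where the full strength of \eqref{f_{2}}--\eqref{f_{3}} on $f'$ (rather than on $f$) is used. You attribute the $C^2$ structure to $f\in C^1$, but you should also record this compactness step, since without it the Morse lemma and the computation of the critical groups at a non-degenerate point are not guaranteed.
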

 We are denoting with  $\mathcal P_{t}(M)$ the Poincar\'e polynomial of $M$.
It is clear that in general, we get a better result using Morse theory; indeed, if for example $M$
is obtained  by  a contractible domain cutting off $k$ disjoint contractible sets, it is $\cat (M)=2$ 
and $\mathcal P_{1}(M)=1+k$. However, by using the Ljusternick-Schnirelmann
 category no non-degeneracy condition is required.

\begin{rem}
As it will be evident by the proofs, Theorem \ref{gio12} and Theorem \ref{gioGae1} remain true
if we replace conditions  \eqref{f_{2}} and \eqref{f_{3}} with the weaker conditions
\begin{itemize}
\item $\lim_{u \rightarrow 0} {f(u)/u}=0$; \smallskip 
\item $ \exists \,q\in  (2, 2^{*}_{s}-1)$ such that $\lim_{u\rightarrow \infty}{f(u)}/{u^{q}}=0$,
 where $2_{s}^{*}:=2N/(N-2s)$.
\end{itemize}
On the other hand for Theorem \ref{gioGae2} we need \eqref{f_{2}} and \eqref{f_{3}}
to have the compactness of a certain operator (see Section \ref{TH3}).

We have preferred to state our theorems under the stronger conditions just for the sake of simplicity.

\end{rem}

\medskip

The plan of the paper is the following. In Section \ref{preliminari},
after a change of variable, we introduce an equivalent problem to \eqref{problema}
and the related variational setting;
actually we will prove Theorems \ref{gio12}, \ref{gioGae1} and \ref{gioGae2} by referring
to this equivalent problem. In Section \ref{compattezza} we prove some compactness properties and give the proof of Theorem \ref{gio12}.
Section \ref{Bary} is devoted to introduce the barycenter map and its properties.
They will be fundamental tools in order to obtain the multiplicity results via the category theory
of Ljusternick-Schnirelmann, explored in Section \ref{TH2}, and the Morse theory given in Section \ref{TH3}.

\medskip

As a matter of notations, we denote with $B_{r}(y)$, respectively $B_{r}$, the ball in $\mathbb R^{N}$
 with radius $r>0$ centered in $y$, respectively in $0$. 
 The $L^{p}-$norm
in $\mathbb R^{N}$ is simply denoted with $|\cdot |_{p}$. If we  need
to specify the domain, let us say $A\subset \mathbb R^{N}$, we write $|\cdot |_{L^{p}(A)}$.


\section{Preliminaries and technical results}\label{preliminari}


First of all, it is easy to see that our problem 
 is equivalent, after a change of variable 
  to the following one
\begin{equation}\label{equivalente}\tag{$P_{\varepsilon}^{*}$}
\left\{
        \begin{array}{l}
           (-\Delta)^{s} u + V(\varepsilon x)u=f(u)
            \ \ \text{in} \ \ \mathbb R^N, \quad N>2s \smallskip
        \\
        u \in H^{s}(\mathbb R^N) \smallskip
        \\
        u(x) > 0, \  x \in \mathbb R^N 
             \end{array}
           \right.
\end{equation}
to which we will refer from now on.
Once we find solutions $u_{\varepsilon}$ for 
\eqref{equivalente}, 
the function $w_{\varepsilon}(x):=u_{\varepsilon}(x/\varepsilon)$ will be
a solution of \eqref{problema}.
Moreover, the maximum point
$\zeta_{\varepsilon}$ of $w_{\varepsilon}$ is related to the maximum point $z_{\varepsilon}$  of 
$u_{\varepsilon}$ simply by
$
\zeta_{\varepsilon}= \varepsilon z_{\varepsilon}.
$
Consequently, to prove the concentration property
 stated in Theorem \ref{gioGae1} we just need to show that
$$
\lim_{\varepsilon \to 0^{+}}V(\varepsilon z_{\varepsilon})=V_{0}.
$$

We fix now some notations involving the functionals
used to get the solutions to \eqref{equivalente}.

\subsection{ Variational setting}
Let us start with  the autonomous case. For a given constant (potential)
$\mu>0$ consider the problem
\begin{equation}\label{limite}\tag{$A_{\mu}$}
\left\{
             \begin{array}{l}
              (-\Delta)^{s} u + \mu u=f(u)
               \ \ \text{in} \ \ \mathbb R^N, \ \ N>2s \medskip
        \\
        u \in H^{s}(\mathbb R^N)  \medskip
        \\
        u(x) > 0, \  x \in \mathbb R^N 
             \end{array}
           \right.
\end{equation}
and the $C^{1}$ functional in $H^{s}(\mathbb R^{N})$
\begin{equation*}
E_{\mu}(u)=\frac{1}{2}\int_{\mathbb R^N}|(-\Delta)^{s/2} u|^2 +
          \frac{\mu}{2}\int_{\mathbb R^N} u ^2 -
          \int_{\mathbb R^N}F(u)
\end{equation*}
 whose critical points are the solutions of
\eqref{limite}.
In this case $H^{s}(\mathbb R^N)$ is endowed with the (squared) norm
$$\|u\|^{2}_{\mu} = \int_{\mathbb R^N}|(-\Delta)^{s/2} u|^{2} + \mu \int_{\mathbb R^N} u^{2}.$$

The following are well known facts. The functional $E_{\mu}$ has a mountain pass geometry
and, defining $\mathcal H=\{\gamma\in C([0,1], H^{s}(\mathbb R^{N})): \gamma(0)=0, E_{\mu}(\gamma(1))<0\}$,
 the mountain pass level 
 \begin{equation}\label{mmu}
 m(\mu):=\inf_{\gamma\in \mathcal H}\sup_{t\in[0,1]} E_{\mu}(\gamma(t))
 \end{equation}
satisfies
\begin{equation}\label{mplimite}
m(\mu)=\inf_{u\in H^{s}(\mathbb R^{N})\setminus\{0\}}\sup_{t\geq 0}E_{\mu}(tu)
=\inf_{u\in \mathcal M_{\mu}}E_{\mu}(u)>0,
\end{equation}
where
\begin{equation*}
{\cal M}_{\mu}:=\Big\{u\in H^{s}(\mathbb R^N)\setminus
\{0\}:\int_{\mathbb R^N}|(-\Delta)^{s/2} u|^2
+\mu\int_{\mathbb R^N}u^2=\int_{\mathbb R^N}f(u)u \Big\}.
\end{equation*}
It is standard to see that  $\mathcal M_{\mu}$ is bounded away from zero in $H^{s}(\mathbb R^{N}),$
and  is a differentiable manifold radially diffeomorphic to the unit sphere. It is usually called the {\sl Nehari manifold}
associated to $E_{\mu}$.

On the other hand, the solutions of
\eqref{equivalente} can be characterized as
 critical points of the $C^{1}$ functional given by
$$
I_{\varepsilon}(u) =\frac{1}{2}\int_{\mathbb R^N}|(-\Delta)^{s/2} u|^2 +
          \frac{1}{2}\int_{\mathbb R^N}V(\varepsilon x)u^2 -
          \int_{\mathbb R^N}F(u)
$$
which is well defined on the Hilbert  space 
$$
W_{\varepsilon}: = \left\{ u \in H^{s}({\mathbb R^N}) :
\int_{\mathbb R^N}V(\varepsilon x)u^{2} < \infty \right\}
$$
endowed with the (squared) norm
$$
\|u\|^{2}_{W_\varepsilon} = \int_{\mathbb R^N}|(-\Delta)^{s/2} u|^{2} +
\int_{\mathbb R^N}V(\varepsilon x)u^{2}.
$$
Note that if $V_{\infty}=+\infty, W_{\varepsilon}$ has compact embedding into $L^{p}(\mathbb R^{N})$
for $p\in [2,2_{s}^{*})$, see e.g. \cite[Lemma 3.2]{Cheng}.

The Nehari manifold associated to
$I_{\varepsilon}$ is
$$
{\cal N}_{\varepsilon}=\Big\{u\in W_{\varepsilon}\setminus
\{0\}: J_{\varepsilon}(u)=0 \Big\}
$$
where
\begin{eqnarray}\label{j}
J_{\varepsilon}(u):= \int_{\mathbb R^{N}}|(-\Delta)^{s/2} u|^{2} +
\int_{\mathbb R^{N}}V(\varepsilon x)u^{2} - \int_{\mathbb R^{N}}f(u)u
\end{eqnarray}
and its tangent space in $u$ is given by
$$\mathrm T_{u}\mathcal N_{\varepsilon}=\Big\{ v\in H^{s}(\mathbb R^{N}): J'(u)[v]=0\Big\}.$$

Let us introduce also 
$$\mathcal S_{\varepsilon}:=\Big\{u\in W_{\varepsilon}: \|u\|_{W_\varepsilon}=1, u>0 \ a.e.\Big\}$$
which is a smooth manifold of codimension 1.
The next result is standard; the proof follows the same lines of \cite[Lemma 2.1 and Lemma 2.2]{BC}.
\begin{lem} The following proposition hold true:
\begin{itemize}
\item[1.] for every $u\in \mathcal N_{\varepsilon}$ it is $J_{\varepsilon}'(u)[u]<0;$ \smallskip
\item[2.] $\mathcal N_{\varepsilon}$ is a differentiable manifold radially diffeormorphic to 
$\mathcal S_{\varepsilon}$ and there exists $k_{\varepsilon}>0$
such that $$\|u\|_{W_\varepsilon}\geq k_{\varepsilon}, \quad I_{\varepsilon}(u)\geq k_{\varepsilon}$$
\end{itemize}
\end{lem}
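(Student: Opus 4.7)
The argument follows the standard Nehari-manifold machinery, relying crucially on the monotonicity condition \eqref{f_{5}} and the subcritical growth \eqref{f_{2}}--\eqref{f_{3}}.

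For item 1, I would differentiate $J_{\varepsilon}$ to get
\[
J_{\varepsilon}'(u)[u]=2\|u\|_{W_{\varepsilon}}^{2}-\int_{\mathbb R^{N}}f'(u)u^{2}-\int_{\mathbb R^{N}}f(u)u.
\]
Using $u\in\mathcal N_{\varepsilon}$, i.e.\ $\|u\|_{W_{\varepsilon}}^{2}=\int f(u)u$, this simplifies to $J_{\varepsilon}'(u)[u]=\int\bigl(f(u)u-f'(u)u^{2}\bigr)$. By \eqref{f_{5}}, the map $s\mapsto f(s)/s$ is strictly increasing on $(0,+\infty)$, which is equivalent to $f'(s)s-f(s)>0$ for all $s>0$; combined with \eqref{f_{1}}, the integrand is $\leq 0$ a.e.\ and strictly negative on $\{u>0\}$, a set of positive measure (otherwise $\int f(u)u=0$ would contradict $u\neq 0$). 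Hence $J_{\varepsilon}'(u)[u]<0$.

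For item 2, since $J_{\varepsilon}'(u)\neq 0$ on $\mathcal N_{\varepsilon}$, the set $\mathcal N_{\varepsilon}$ is a $C^{1}$ submanifold of codimension one via the implicit function theorem. To construct the radial diffeomorphism, fix $u\in\mathcal S_{\varepsilon}$ and consider
\[
\phi_{u}(t):=I_{\varepsilon}(tu),\qquad t\geq 0.
\]
The critical points of $\phi_{u}$ correspond to $tu\in\mathcal N_{\varepsilon}$, and by \eqref{f_{5}} the function $t\mapsto \int f(tu)tu/t^{2}=\int(f(tu)/(tu))u^{2}\,dx$ is strictly increasing on $(0,+\infty)$ (for $u>0$ a.e.), vanishing as $t\to 0^{+}$ by \eqref{f_{2}} and tending to $+\infty$ as $t\to+\infty$ by \eqref{f_{4}}. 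Hence there is a unique $t_{u}>0$ with $t_{u}u\in\mathcal N_{\varepsilon}$. The map $m_{\varepsilon}\colon \mathcal S_{\varepsilon}\to\mathcal N_{\varepsilon}$, $u\mapsto t_{u}u$, is well-defined and continuous; its $C^{1}$ character follows from the implicit function theorem applied to $H(t,u):=J_{\varepsilon}(tu)$, noting that $\partial_{t}H(t_{u},u)=t_{u}^{-1}J_{\varepsilon}'(t_{u}u)[t_{u}u]<0$ by item 1. Its inverse $v\mapsto v/\|v\|_{W_{\varepsilon}}$ is clearly smooth.

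For the lower bounds on $\mathcal N_{\varepsilon}$, I would use \eqref{f_{2}}--\eqref{f_{3}} to estimate, for any $\delta>0$,
\[
|f(s)s|\leq \delta s^{2}+C_{\delta}|s|^{q+1}\quad\text{for all }s\in\mathbb R,
\]
together with the continuous embedding $W_{\varepsilon}\hookrightarrow L^{p}(\mathbb R^{N})$ for $p\in[2,2_{s}^{*}]$ (which holds since $V\geq V_{0}>0$). This yields
\[
\|u\|_{W_{\varepsilon}}^{2}=\int_{\mathbb R^{N}}f(u)u\leq \delta C_{1}\|u\|_{W_{\varepsilon}}^{2}+C_{2}\|u\|_{W_{\varepsilon}}^{q+1},
\]
and fixing $\delta$ small gives $\|u\|_{W_{\varepsilon}}\geq k_{\varepsilon}>0$ since $q+1>2$. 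Finally, invoking \eqref{f_{4}},
\[
I_{\varepsilon}(u)=\frac{1}{2}\|u\|_{W_{\varepsilon}}^{2}-\int F(u)\geq \left(\frac{1}{2}-\frac{1}{\theta}\right)\int f(u)u=\left(\frac{1}{2}-\frac{1}{\theta}\right)\|u\|_{W_{\varepsilon}}^{2}\geq\left(\frac{1}{2}-\frac{1}{\theta}\right)k_{\varepsilon}^{2}>0,
\]
so (up to shrinking $k_{\varepsilon}$) both bounds hold. No step is genuinely hard; the only point requiring some care is the $C^{1}$ dependence of $t_{u}$ on $u$, for which the strict negativity of $J_{\varepsilon}'(u)[u]$ established in item 1 is precisely the nondegeneracy needed to apply the implicit function theorem.
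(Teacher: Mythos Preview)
Your argument is correct and is exactly the standard Nehari-manifold proof that the paper defers to by citing \cite[Lemmas 2.1 and 2.2]{BC}; the paper itself gives no details beyond that reference. One small imprecision worth noting: strict monotonicity of $s\mapsto f(s)/s$ for a $C^{1}$ function yields only $f'(s)s-f(s)\geq 0$ (not strict inequality at every point), but this does not affect the outcome, and in any case the strict negativity of $J_{\varepsilon}'(u)[u]$ is needed only to guarantee that $\mathcal N_{\varepsilon}$ is a regular level set and that the implicit function theorem applies to $t_{u}$---both of which you use correctly.
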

As in \cite[Lemma 2.1]{BC}, it is easy to see that the functions in $\mathcal N_{\varepsilon}$
have to be positive on some set of nonzero measure.
 It is also easy to check that  $I_{\varepsilon}$ has the mountain pass geometry, as given in the next
\begin{lem}\label{geometriadamontanha}
Fixed $\varepsilon>0,$ for the  functional $I_{\varepsilon}$ the following statements hold: \smallskip
\begin{itemize}
\item[i)] there exists $\alpha$, $\rho > 0$ such that
$
I_{\varepsilon}(u)\geq \alpha \,\,\, \mbox{with} \,\,\,
\|u\|_{\varepsilon}=\rho, \smallskip
$
\item[ii)] there exist $e\in W_{\varepsilon}$ with $\|e\|_{W_{\varepsilon}}>\rho$ such that  $I_{\varepsilon}(e)<0$.
\end{itemize}
\end{lem}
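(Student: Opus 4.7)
The plan is to verify the two standard Mountain Pass conditions using only the growth hypotheses on $f$ and the lower bound on $V$, proceeding separately for $(i)$ and $(ii)$.

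For $(i)$, I would first derive a pointwise growth estimate on $F$. Combining \eqref{f_{1}}, \eqref{f_{2}} and \eqref{f_{3}}, for every $\eta>0$ there exists $C_{\eta}>0$ such that
\[
|f(u)|\le \eta |u|+C_{\eta}|u|^{q},\qquad |F(u)|\le \frac{\eta}{2}u^{2}+\frac{C_{\eta}}{q+1}|u|^{q+1}
\qquad\text{for all }u\in\mathbb R.
\]
Since $V\ge V_{0}>0$ by \eqref{V}, we have $|u|_{2}^{2}\le V_{0}^{-1}\|u\|_{W_{\varepsilon}}^{2}$, and the continuous embedding $H^{s}(\mathbb R^{N})\hookrightarrow L^{q+1}(\mathbb R^{N})$ (valid because $q+1\in(2,2^{*}_{s})$) yields $|u|_{q+1}^{q+1}\le C\|u\|_{W_{\varepsilon}}^{q+1}$. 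Plugging these into $I_{\varepsilon}$ and choosing, for instance, $\eta=V_{0}/2$, gives
\[
I_{\varepsilon}(u)\ge \tfrac14\|u\|_{W_{\varepsilon}}^{2}-C'\|u\|_{W_{\varepsilon}}^{q+1}.
\]
Since $q+1>2$, taking $\rho>0$ small enough and setting $\alpha:=\tfrac14\rho^{2}-C'\rho^{q+1}>0$ establishes $(i)$.

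For $(ii)$, I would exploit the Ambrosetti--Rabinowitz condition \eqref{f_{4}}. Integrating the differential inequality $\theta F(u)\le uf(u)$ on $(u_{0},u)$ with $u_{0}>0$ fixed and $F(u_{0})>0$ (which holds by \eqref{f_{5}} and \eqref{f_{1}}), one obtains constants $c_{1},c_{2}>0$ such that
\[
F(u)\ge c_{1}u^{\theta}-c_{2}\qquad\text{for all }u\ge 0.
\]
Pick any nonnegative $\varphi\in C_{c}^{\infty}(\mathbb R^{N})\setminus\{0\}$, which belongs to $W_{\varepsilon}$. For $t>0$,
\[
I_{\varepsilon}(t\varphi)\le \frac{t^{2}}{2}\|\varphi\|_{W_{\varepsilon}}^{2}-c_{1}t^{\theta}\int_{\mathrm{supp}\,\varphi}\varphi^{\theta}+c_{2}|\mathrm{supp}\,\varphi|.
\]
Since $\theta>2$, the right-hand side tends to $-\infty$ as $t\to+\infty$, so $e:=t\varphi$ for $t$ sufficiently large satisfies $\|e\|_{W_{\varepsilon}}>\rho$ and $I_{\varepsilon}(e)<0$, proving $(ii)$.

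Neither part presents a real obstacle: the argument is entirely elementary once the subcritical/superlinear information carried by \eqref{f_{2}}--\eqref{f_{4}} is translated into the above pointwise bounds on $F$. The only point requiring a little care is the uniformity in $\varepsilon$ of $\alpha,\rho$, which is automatic here since every estimate above only uses $V\ge V_{0}$ and the Sobolev embedding, both of which are independent of $\varepsilon$; this uniformity will be convenient later when passing to the limit $\varepsilon\to 0^{+}$ in the multiplicity arguments.
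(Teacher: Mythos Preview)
Your argument is correct and is exactly the standard verification the paper has in mind; the paper does not actually write out a proof of this lemma, merely stating that ``it is also easy to check that $I_{\varepsilon}$ has the mountain pass geometry''. One tiny remark: the positivity $F(u_{0})>0$ that you invoke follows directly from \eqref{f_{4}} (which already asserts $0<\theta F(u)$ for $u>0$), so the appeal to \eqref{f_{5}} there is unnecessary.
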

Then, defining the mountain pass level of $I_{\varepsilon}$,
\begin{equation*}\label{carat1}
c_{\varepsilon}:=\inf_{\gamma\in \mathcal H}\sup_{t\in[0,1]} I_{\varepsilon}(\gamma(t))
\end{equation*}
where $\mathcal H=\{\gamma\in C([0,1], W_{\varepsilon}): \gamma(0)=0, I_{\varepsilon}(\gamma(1))<0\}$,
 well known arguments 
  imply that 
 \begin{equation*}\label{carat2}
 c_{\varepsilon}=\inf_{u\in W_{\varepsilon}\setminus \{0\}}\sup_{t\geq
0}I_{\varepsilon}(tu)= \inf_{u\in {\cal
N}_{\varepsilon}}I_{\varepsilon}(u) \geq m(V_{0}).
 \end{equation*}

\medskip

\section{Compactness properties for $I_{\varepsilon}$ and $E_{\mu}$}\label{compattezza}

This section is devoted to prove compactness properties related to
the functionals $I_{\varepsilon}$ and $E_{\mu}$.

It is standard by now to see that hypothesis \eqref{f_{4}} is used to obtain the boundedness of the $PS$ sequences
for $I_{\varepsilon}$ or $E_{\mu}$:
we will always omit the prove of this fact in the paper.

We  need to recall the following Lions type lemma.
\begin{lem}\label{lionslemma}
If $\{u_{n}\}$ is bounded in $H^{s}(\mathbb R^{N})$ and for some $R>0$ and $2\leq r< 2_{s}^{*}$ we have
$$\sup_{x\in \mathbb R^{N}}\int_{B_R(x)}|u_{n}|^{r}\to 0\quad  \text{ as }\quad  n\to \infty,$$
then $u_{n}\to 0$ in $L^{p}(\mathbb R^{N})$ for $2< p <2_{s}^{*}$.
\end{lem}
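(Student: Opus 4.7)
The plan is to adapt P.L.\ Lions' classical concentration--vanishing argument to the fractional setting, using the global Sobolev embedding $H^{s}(\mathbb R^{N})\hookrightarrow L^{2^{*}_{s}}(\mathbb R^{N})$ together with a finite--overlap covering of $\mathbb R^{N}$.

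First I would observe that the hypothesis is stable under a change of exponent: if $\sup_{x}\int_{B_{R}(x)}|u_{n}|^{r}\to 0$ for some $r\in[2,2^{*}_{s})$, the same vanishing holds for \emph{any} $\tilde r\in (0,2^{*}_{s})$. Indeed, for $\tilde r\le r$ this is H\"older applied on each ball, while for $\tilde r\in(r,2^{*}_{s})$ one interpolates $L^{\tilde r}$ ball-wise between $L^{r}$ and $L^{2^{*}_{s}}$ and uses that $\{u_{n}\}$ is bounded in $L^{2^{*}_{s}}(\mathbb R^{N})$ by the Sobolev embedding. Given the target $p\in(2,2^{*}_{s})$, I would therefore work with the specific exponent $\tilde r:=(p-2)\,2^{*}_{s}/(2^{*}_{s}-2)\in(0,2^{*}_{s})$, tailored so that $\tfrac{1}{p}=\tfrac{1-2/p}{\tilde r}+\tfrac{2/p}{2^{*}_{s}}$.

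Next, cover $\mathbb R^{N}$ by balls $\{B_{R}(x_{i})\}_{i}$ of uniformly finite overlap $\kappa=\kappa(N)$. On each $B_{i}:=B_{R}(x_{i})$, the $L^{p}$-interpolation inequality raised to the $p$-th power gives
$$
\int_{B_{i}}|u_{n}|^{p}\le \Bigl(\int_{B_{i}}|u_{n}|^{\tilde r}\Bigr)^{\!(2^{*}_{s}-2)/2^{*}_{s}}\|u_{n}\|_{L^{2^{*}_{s}}(B_{i})}^{2}.
$$
A \emph{localized} fractional Sobolev inequality on a slightly enlarged concentric ball $B_{i}^{*}$---obtained via multiplication by a smooth cutoff and the global embedding $H^{s}\hookrightarrow L^{2^{*}_{s}}$---yields $\|u_{n}\|_{L^{2^{*}_{s}}(B_{i})}^{2}\le C\|u_{n}\|_{H^{s}(B_{i}^{*})}^{2}$, and summing with finite overlap of $\{B_{i}^{*}\}$ bounds $\sum_{i}\|u_{n}\|_{L^{2^{*}_{s}}(B_{i})}^{2}$ by $C\|u_{n}\|_{H^{s}(\mathbb R^{N})}^{2}$. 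Combining these,
$$
\int_{\mathbb R^{N}}|u_{n}|^{p}\le C\Bigl(\sup_{x\in\mathbb R^{N}}\int_{B_{R}(x)}|u_{n}|^{\tilde r}\Bigr)^{\!(2^{*}_{s}-2)/2^{*}_{s}}\|u_{n}\|_{H^{s}}^{2}\longrightarrow 0,
$$
by the (reduced) hypothesis and the boundedness of $\{u_{n}\}$ in $H^{s}$.

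The main technical obstacle is precisely the localized fractional Sobolev inequality used above. Unlike the case $s=1$, the Gagliardo seminorm is intrinsically nonlocal, so the cutoff argument has to be executed with some care: the interactions $\iint_{B_{i}^{*}\times(\mathbb R^{N}\setminus B_{i}^{*})}|u_{n}(x)-u_{n}(y)|^{2}|x-y|^{-N-2s}\,dx\,dy$ must be controlled using the decay of both the cutoff and the kernel. Once this local Sobolev inequality is in place, the remainder of the proof is exactly Lions' classical interpolation/covering scheme transported verbatim to $H^{s}$.
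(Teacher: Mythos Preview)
The paper does not supply its own proof of this lemma; it simply refers the reader to \cite[Lemma~2.3]{GPM}. Your argument---Lions' covering and interpolation scheme transported to $H^{s}$---is correct, and is essentially the approach taken in the cited reference. One minor remark on the step you flag as the main obstacle: the inequality $\|u\|_{L^{2^{*}_{s}}(B_{i})}\le C\,\|u\|_{H^{s}(B_{i}^{*})}$ can be obtained more cleanly by invoking the fractional Sobolev embedding on bounded Lipschitz domains (via a Stein-type extension operator), which sidesteps the cutoff tail interactions you mention; once this localized embedding is in hand, the bound $\sum_{i}\|u\|_{H^{s}(B_{i}^{*})}^{2}\le C\,\|u\|_{H^{s}(\mathbb R^{N})}^{2}$ follows directly from the finite-overlap property, since each pair $(x,y)$ lies in at most a bounded number of products $B_{i}^{*}\times B_{i}^{*}$.
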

For a proof see e.g. \cite[Lemma 2.3]{GPM}.

In order to prove compactness, some  preliminary work is needed.

\begin{lem}\label{gio5}
Let $\{u_{n}\}\subset W_{\varepsilon}$ be 
such that $I_{\varepsilon}'(u_{n})\to 0$
and $u_{n}\rightharpoonup 0$ in
$W_{\varepsilon}$.
Then we have either
\begin{itemize}
\item[a)]$u_{n} \rightarrow 0$ in $W_{\varepsilon}$, or \smallskip
\item[b)] there exist a sequence $\{y_{n}\} \subset {\mathbb R^N}$ and
constants $R, c > 0$ such that
$$
\liminf_{n\rightarrow +
\infty}\int_{B_{R}(y_{n})}u_{n}^{2} \geq c > 0.
$$
\end{itemize}
\end{lem}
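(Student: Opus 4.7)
The plan is to establish the standard Lions-type dichotomy: assume the non-vanishing alternative (b) fails and then deduce strong convergence (a). Concretely, I would suppose that for every $R>0$
\[
\sup_{y\in\mathbb R^N}\int_{B_R(y)}u_n^2\longrightarrow 0,
\]
and aim to show $\|u_n\|_{W_\varepsilon}\to 0$. First I would note that $u_n\rightharpoonup 0$ in $W_\varepsilon$ implies $\{u_n\}$ is bounded, hence also bounded in $H^s(\mathbb R^N)$, so Lemma \ref{lionslemma} applies and gives $u_n\to 0$ strongly in $L^p(\mathbb R^N)$ for every $p\in(2,2_s^*)$. In particular, since $q+1\in(3,2_s^*)$ by \eqref{f_{3}}, we get $|u_n|_{q+1}\to 0$.

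Next I would exploit the growth conditions on $f$. From \eqref{f_{2}} and \eqref{f_{3}} (integrating $f'$ and using $f(0)=0$) one obtains the familiar estimate: for every $\eta>0$ there exists $C_\eta>0$ such that
\[
|f(u)u|\leq \eta\,u^2+C_\eta|u|^{q+1}\qquad\text{for all }u\in\mathbb R.
\]
Hence
\[
\Bigl|\int_{\mathbb R^N}f(u_n)u_n\Bigr|\leq \eta\,|u_n|_2^2+C_\eta|u_n|_{q+1}^{q+1}.
\]
Since $V(\varepsilon x)\geq V_0>0$, we have $|u_n|_2^2\leq V_0^{-1}\|u_n\|_{W_\varepsilon}^2$, which is uniformly bounded, while $|u_n|_{q+1}^{q+1}\to 0$ by the previous step. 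Letting $n\to\infty$ and then $\eta\to 0^+$ yields
\[
\int_{\mathbb R^N}f(u_n)u_n\longrightarrow 0.
\]

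Finally I would test $I_\varepsilon'(u_n)$ against $u_n$. Since $\|u_n\|_{W_\varepsilon}$ is bounded and $I_\varepsilon'(u_n)\to 0$ in the dual, $I_\varepsilon'(u_n)[u_n]\to 0$, that is,
\[
\|u_n\|_{W_\varepsilon}^2=\int_{\mathbb R^N}f(u_n)u_n+I_\varepsilon'(u_n)[u_n]\longrightarrow 0,
\]
which is precisely (a).

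I do not expect a serious obstacle here; the one point requiring a little care is that vanishing in the sense of (b) only yields $L^p$ convergence for $p>2$, so the $L^2$ part of $\int f(u_n)u_n$ cannot be controlled directly. The $\eta$-splitting above, together with the boundedness of $\{u_n\}$ in $W_\varepsilon$ (which controls the $L^2$ norm via $V_0$), is exactly what resolves this and is the one non-automatic ingredient of the argument.
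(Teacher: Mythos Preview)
Your proposal is correct and follows essentially the same route as the paper: assume (b) fails, apply the Lions-type Lemma~\ref{lionslemma} to get $u_n\to 0$ in $L^p$ for $p\in(2,2_s^*)$, use the growth estimate $|f(u)u|\leq \xi u^2 + C_\xi |u|^{q+1}$ together with the $L^2$-boundedness of $\{u_n\}$ to deduce $\int f(u_n)u_n\to 0$, and conclude by evaluating $I_\varepsilon'(u_n)[u_n]$. The paper's argument is identical in structure and detail.
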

\begin{proof}Suppose that b) does not occur. Using Lemma \ref{lionslemma}  it follows
$$
u_{n} \rightarrow 0 \,\,\, \mbox{in}\,\,\, L^{p}({\mathbb R^N})\,\,\,
\mbox{for}\,\,\, p \in (2, 2^{*}_{s}).
$$
Given $\xi > 0$, by \eqref{f_{2}} and \eqref{f_{3}}, for some constant $C_{\xi}>0$ we have
$$
0 \leq \int_{\mathbb R^N}f(u_{n})u_{n}\leq \xi \int_{\mathbb R^N}
u_{n}^{2} + C_{\xi} \int_{\mathbb R^N}|u_{n}|^{q + 1}.
$$
Using the fact that $\{u_{n}\}$ is bounded in $L^{2}(\mathbb R^{N})$,
$u_{n} \rightarrow 0$ in $L^{q+1}(\mathbb R^{N})$, and that $\xi$ is
arbitrary, we can conclude that
$$
\int_{\mathbb R^N}f(u_{n})u_{n}\rightarrow 0.
$$
Recalling that $\|u_{n}\|^{2}_{W_\varepsilon}-\int_{\mathbb R^N}f(u_{n})u_{n}= I'_{\varepsilon}(u_{n})[u_{n}]=o_{n}(1)$, it follows
that
$u_{n} \rightarrow 0$ in $ W_{\varepsilon}$. 
\end{proof}

\begin{lem}\label{gio9}
Assume that $V_{\infty}<\infty$ and let $\{v_{n}\}$ be a $(PS)_{d}$
sequence for  $I_{\varepsilon}$ in $W_{\varepsilon}$ with
$v_{n}\rightharpoonup 0$ in $W_{\varepsilon}$. Then
$$v_{n}\not\rightarrow 0\ \  \text{in}\ \ W_{\varepsilon}\ \Longrightarrow \ d \geq
m({V_{\infty}})$$
(recall that  $m({V_{\infty}})$ is the mountain pass level of
$E_{V_{\infty}}$, see \eqref{mplimite}).
\end{lem}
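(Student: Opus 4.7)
The strategy is a Benci--Cerami concentration argument: use Lemma~\ref{gio5} to extract a nontrivial weak limit of translates of $v_n$, show that this limit solves the autonomous limit problem at level $V_\infty$, and close with an energy comparison.

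Applying Lemma~\ref{gio5}(b), since $v_n\not\to 0$ in $W_{\varepsilon}$, I get $\{y_n\}\subset\mathbb R^N$ and $R,c>0$ with $\liminf_n\int_{B_R(y_n)}v_n^2\geq c$. Because $v_n\rightharpoonup 0$ in $H^s(\mathbb R^N)$, the Rellich compactness theorem forces $\int_{B_R(y_n)}v_n^2\to 0$ whenever $\{y_n\}$ stays in a bounded set; hence $|y_n|\to\infty$, so $|\varepsilon y_n|\to\infty$ as well. Setting $\tilde v_n(x):=v_n(x+y_n)$ yields a bounded sequence in $H^s(\mathbb R^N)$; along a subsequence $\tilde v_n\rightharpoonup \tilde v$ in $H^s$, $\tilde v_n\to\tilde v$ a.e.\ and in $L^p_{\mathrm{loc}}$ for $p\in[1,2^*_s)$, with $\tilde v\neq 0$ by the mass lower bound.

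The core step is to prove $\tilde v$ solves $(-\Delta)^s\tilde v + V_\infty \tilde v = f(\tilde v)$. Testing $I'_\varepsilon(v_n)\to 0$ against $\varphi(\cdot-y_n)$ for $\varphi\in C_c^\infty(\mathbb R^N)$ and changing variables gives
\[
\int (-\Delta)^{s/2}\tilde v_n\,(-\Delta)^{s/2}\varphi + \int V(\varepsilon(x+y_n))\tilde v_n\varphi - \int f(\tilde v_n)\varphi = o(1).
\]
The fractional term converges by weak $H^s$-convergence; the nonlinear one by \eqref{f_{2}}--\eqref{f_{3}} and compact embeddings on $\mathrm{supp}\,\varphi$. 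For the potential, $|\varepsilon(x+y_n)|\to\infty$ uniformly on $\mathrm{supp}\,\varphi$, so \eqref{V} together with the uniform bound $\int V(\varepsilon x)v_n^2\leq C$ (which controls any upward oscillations of $V$) lets one conclude $\int V(\varepsilon(x+y_n))\tilde v_n\varphi \to V_\infty\int\tilde v\,\varphi$. Hence $\tilde v\in\mathcal M_{V_\infty}$, and \eqref{mplimite} gives $E_{V_\infty}(\tilde v)\geq m(V_\infty)$.

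It remains to show $d\geq E_{V_\infty}(\tilde v)$. Using \eqref{f_{4}} one writes
\[
I_\varepsilon(v_n) = I_\varepsilon(v_n)-\tfrac1\theta I'_\varepsilon(v_n)[v_n]+o(1) = \bigl(\tfrac12-\tfrac1\theta\bigr)\|v_n\|_{W_{\varepsilon}}^2 + \int\bigl(\tfrac1\theta f(v_n)v_n - F(v_n)\bigr)+o(1),
\]
whose integrands are all nonnegative. Translating to $\tilde v_n$ and applying weak lower semicontinuity of the $H^s$-seminorm, Fatou to the potential term (using $\liminf_n V(\varepsilon(\cdot+y_n))\geq V_\infty$ a.e.), and Fatou again to the $f$-term, one obtains $d \geq E_{V_\infty}(\tilde v)-\tfrac1\theta E'_{V_\infty}(\tilde v)[\tilde v] = E_{V_\infty}(\tilde v) \geq m(V_\infty)$, since $\tilde v\in\mathcal M_{V_\infty}$. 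The main obstacle is the potential term: \eqref{V} supplies only a $\liminf$ at infinity, so no pointwise limit of $V(\varepsilon(x+y_n))$ is available, and the whole proof must be organized so that only Fatou-type inequalities (applied to integrands of definite sign, as provided by the Nehari rewriting) are invoked.
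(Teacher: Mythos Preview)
Your approach is genuinely different from the paper's, and it contains a real gap at the step you yourself flag as ``the main obstacle''.

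\medskip

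\textbf{The gap.} You claim that testing $I_\varepsilon'(v_n)$ against $\varphi(\cdot-y_n)$ and passing to the limit yields
\[
\int V(\varepsilon(x+y_n))\,\tilde v_n\,\varphi \;\longrightarrow\; V_\infty\int \tilde v\,\varphi,
\]
so that $\tilde v\in\mathcal M_{V_\infty}$. But hypothesis \eqref{V} gives only $\liminf_{|z|\to\infty}V(z)=V_\infty$; nothing forbids $V$ from oscillating \emph{above} $V_\infty$ along the sequence $\varepsilon y_n$. Hence $V(\varepsilon(x+y_n))$ need not converge pointwise (nor along any fixed subsequence) to $V_\infty$, and the displayed equality is unjustified. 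Your remark that the uniform bound $\int V(\varepsilon x)v_n^2\le C$ ``controls upward oscillations'' does not help: this is a global integral bound and says nothing about the pointwise values of $V$ on $\mathrm{supp}\,\varphi$. Without $\tilde v\in\mathcal M_{V_\infty}$, your final chain
\[
d\;\ge\;E_{V_\infty}(\tilde v)-\tfrac{1}{\theta}E'_{V_\infty}(\tilde v)[\tilde v]\;=\;E_{V_\infty}(\tilde v)\;\ge\; m(V_\infty)
\]
breaks at both the equality and the last inequality. Your closing sentence says the proof is organised so that only Fatou-type inequalities are used, but step 4 explicitly asserts an equality that Fatou cannot deliver; the argument is internally inconsistent there.

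\medskip

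\textbf{How the paper avoids this.} The paper does not attempt to identify a limit equation for the translated profile. Instead it projects the original sequence onto $\mathcal M_{V_\infty}$: take $t_n>0$ with $t_nv_n\in\mathcal M_{V_\infty}$ and prove $\limsup_n t_n\le 1$. The only interaction with the potential is through the quantity
\[
\int_{\mathbb R^N}\bigl[V_\infty-V(\varepsilon x)\bigr]v_n^2,
\]
for which an \emph{upper} bound suffices; this follows from \eqref{V} (which gives $V(\varepsilon x)\ge V_\infty-\xi$ for $|x|\ge R$) together with $v_n\to 0$ in $L^2_{\mathrm{loc}}$. One then concludes using the monotonicity of $u\mapsto \tfrac12 f(u)u-F(u)$ (a consequence of \eqref{f_{5}}) and the Nehari identity $E_{V_\infty}(t_nv_n)=\int[\tfrac12 f(t_nv_n)t_nv_n-F(t_nv_n)]$. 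This route never needs the pointwise limit of $V(\varepsilon(x+y_n))$, which is precisely what your argument requires and \eqref{V} does not provide.
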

\begin{proof}
Let $\{t_{n}\}\subset (0,+\infty )$ be a
sequence such that $\{t_{n}v_{n}\}\subset {\cal M}_{V_{\infty}}$. We
start by showing the following

\medskip

\noindent \textbf{Claim} {\it The sequence $\{t_{n}\}$
satisfies $\limsup_{n\rightarrow \infty}t_{n} \leq 1$. }

\medskip
\noindent In fact, supposing by contradiction that the claim
does not hold, there exists $\delta
> 0$ and a subsequence still denoted by $\{t_{n}\}$, such that
\begin{eqnarray}\label{limsup}
t_{n} \geq 1 + \delta \quad \mbox{for all} \quad n \in \mathbb  N.
\end{eqnarray}
Since $\{v_{n}\}$ is bounded in $W_{\varepsilon}$,
$I'_{\varepsilon}(v_{n})[v_{n}]=o_{n}(1)$, that is,
$$
\int_{\mathbb R^N}\biggl[| (-\Delta)^{s/2} v_{n}|^{2} + V(\varepsilon
x)v_{n}^{2}\biggl] = \int_{\mathbb R^N}f(v_{n})v_{n} +
o_{n}(1).
$$
Moreover, since $\{t_{n}v_{n}\}\subset {\cal M}_{V_{\infty}}$, we get
$$
t_{n}^{2} \int_{\mathbb R^N}\biggl[| (-\Delta)^{s/2} v_{n}|^{2} +
V_{\infty} v_{n}^{2}\biggl]=
\int_{\mathbb R^N}f(t_{n}v_{n})t_{n}v_{n}.
$$
The last two equalities imply that
\begin{equation}\label{bo}
\int_{\mathbb R^N}\biggl[\frac{f(t_{n}v_{n})v_{n}^{2}}{t_{n}v_{n}}
- \frac{f(v_{n})v_{n}^{2}}{v_{n}}\biggl]=
\int_{\mathbb R^N}[V_{\infty} - V(\varepsilon x)]v_{n} ^{2}+
o_{n}(1).
\end{equation}
Given $\xi >0$, by condition \eqref{V} there exists $R=R(\xi)
> 0$ such that
$$
V(\varepsilon x) \geq V_{\infty} - \xi \quad \mbox{for any}
\,\,\,|x|\geq R.
$$
Let $C>0$ be such that $\|v_{n}\|_{W_\varepsilon}\leq C$. Since
$v_{n}\rightarrow 0$ in $L^{2}(B_{R}(0))$, we conclude by \eqref{bo}
\begin{eqnarray}\label{mudou}
\int_{\mathbb R^N}\biggl[\frac{f(t_{n}v_{n})}{t_{n}v_{n}} -
\frac{f(v_{n})}{v_{n}}\biggl]v_{n}^{2} \leq \xi CV_{\infty} +
o_{n}(1).
\end{eqnarray}
Since $v_{n}\not\rightarrow 0$ in $W_{\varepsilon}$, we may invoke
Lemma \ref{gio5} to obtain $\{y_{n}\}\subset \mathbb R^{N}$ and
$\check{R},{c} >0$ such that
\begin{eqnarray}\label{1eq4}
\int_{B_{\check R}( y_{n})} v_{n}^{2}\geq{c}.
\end{eqnarray}
Defining $\check{v}_{n} := v_{n}(\cdot+ y_{n})$, we may suppose
that, up to a subsequence,
$$
\check{v}_{n}\rightharpoonup \check{v} \,\,\, \mbox{in}\,\,\, H^{s}(\mathbb R^{N}).
$$
Moreover, in view of (\ref{1eq4}), there exists a subset $\Omega
\subset \mathbb R^{N}$ with positive measure such that $\check{v}>0$ in
$\Omega$. From \eqref{f_{5}}, we can use (\ref{limsup}) to rewrite
(\ref{mudou}) as
\begin{eqnarray*}
0 < \int_{\Omega}\biggl[\frac{f((1 + \delta)\check{v}_{n})}{(1+ \delta)\check{v}_{n}}
-\frac{f(\check{v}_{n})}{\check{v}_{n}}\biggl]\check{v}_{n}^{2} \leq \xi CV_{\infty} + o_{n}(1), \ \
\mbox{for any} \ \ \xi
> 0.
\end{eqnarray*}
Letting $n\rightarrow \infty$ in the last inequality and applying
Fatou's Lemma, it follows that
$$
0<\int_{\Omega}\biggl[\frac{f((1 + \delta)\check{v})}{(1 +\delta)\check{v}}
-\frac{f(\check{v})}{\check{v}}\biggl]\check{v}^{2}\leq
\xi CV_{\infty}, \ \ \mbox{for any} \ \ \xi>0.
$$
which is an absurd, proving the claim.

\medskip 

Now, it is convenient to distinguish the following cases: \medskip

\noindent \textbf{Case 1:} $ \limsup_{n \to \infty}t_{n}=1.$

In this case there exists a subsequence, still denoted by
$\{t_{n}\}$, such that $t_{n}\rightarrow 1$. Thus,
\begin{eqnarray}\label{1eq5}
d + o_{n}(1) = I_{\varepsilon}(v_{n}) \geq m({V_{\infty}}) +
I_{\varepsilon}(v_{n}) - E_{V_{\infty}}(t_{n}v_{n}).
\end{eqnarray}
Recalling that
\begin{eqnarray*}
I_{\varepsilon}(v_{n})- E_{V_{\infty}}(t_{n}v_{n})&=&
\frac{(1-t_{n}^{2})}{2}\int_{\mathbb R^{N}}|(-\Delta)^{s/2} v_{n}|^{2}+
\frac{1}{2}\int_{\mathbb R^{N}}V(\varepsilon x)v_{n}^{2} - \frac
{t_{n}^{2}}{2}\int_{\mathbb R^{N}}V_{\infty}v_{n}^{2}
\\ &+&
\int_{\mathbb R^{N}}[F(t_{n}v_{n}) - F(v_{n})],
\end{eqnarray*}
and using the fact that $\{v_{n}\}$ is bounded in $W_{\varepsilon}$ by $C>0$
together with the condition \eqref{V}, we get
\begin{eqnarray*}
I_{\varepsilon}(v_{n}) - E_{V_{\infty}}(t_{n}v_{n}) \geq \ o_{n}(1) -
C \xi + \int_{\mathbb R^{N}}[F(t_{n}v_{n}) - F(v_{n})].
\end{eqnarray*}
Moreover, by the Mean Value Theorem,
\begin{eqnarray*}
\int_{\mathbb R^{N}}[F(t_{n}v_{n}) - F(v_{n})]= o_{n}(1),
\end{eqnarray*}
therefore \eqref{1eq5} becomes
\begin{eqnarray*}
d + o_{n}(1) \geq m({V_{\infty}}) - C \xi + o_{n}(1),
\end{eqnarray*}
and taking the limit in $n$, by the arbitrariness of $\xi$,  we have
$d \geq m({V_{\infty}}).$ \medskip

\noindent \textbf{Case 2:} $\limsup_{n \to \infty}t_{n}=t_{0} <1$.

In this case up to a  subsequence,
still denoted by $\{t_{n}\}$, we have
$$
t_{n} \to t_{0} \,\,\, \mbox{and} \,\,\, t_{n} < 1 \,\,\, \text{ for all }
n \in \mathbb N .
$$
Since $u\mapsto \frac{1}{2}f(u)u-F(u)$ is increasing, we have
$$
m({V_{\infty}}) \leq
\int_{\mathbb R^{N}}\biggl[\frac{1}{2}f(t_{n}v_{n})t_{n}v_{n}-F(t_{n}v_{n})\biggl]
\leq\int_{\mathbb R^{N}}\biggl[\frac{1}{2}f(v_{n})v_{n}-F(v_{n})\biggl]\\
$$
hence,
$$
m({V_{\infty}}) \leq
I_{\varepsilon}(v_{n})-\frac{1}{2}I'_{\varepsilon}(v_{n})[v_{n}] = d +
o_{n}(1),
$$
and again we easily conclude.
\end{proof}

Now we are ready to give the desired compactness result.

\begin{prop}\label{gio11}
The functional $I_{\varepsilon}$  in $W_{\varepsilon}$ satisfies the $(PS)_{c}$ condition
\begin{itemize}
\item[1.] at any level $c < m({V_{\infty}})$, if $V_{\infty}<\infty$, \medskip
\item[2.] at any level $c\in\mathbb R$, if $V_{\infty}=\infty$.
\end{itemize}
\end{prop}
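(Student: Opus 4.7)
The plan is to show that any $(PS)_{c}$ sequence $\{u_{n}\}\subset W_{\varepsilon}$ admits a strongly convergent subsequence, treating the two cases separately but starting from a common first step. By condition \eqref{f_{4}} (as remarked in the paper) $\{u_{n}\}$ is bounded in $W_{\varepsilon}$, so up to a subsequence $u_{n}\rightharpoonup u$ in $W_{\varepsilon}$ and a.e.\ in $\mathbb{R}^{N}$. Standard arguments, using \eqref{f_{2}}--\eqref{f_{3}} together with the local compactness of $H^{s}\hookrightarrow L^{p}_{\mathrm{loc}}$ for $p\in[2,2^{*}_{s})$, give $I'_{\varepsilon}(u)=0$, so $u$ is a critical point. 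In particular, from $I'_{\varepsilon}(u)[u]=0$ and the standard estimate $\tfrac12 f(t)t-F(t)\geq 0$ (a consequence of \eqref{f_{4}}) one deduces
\[
I_{\varepsilon}(u)=I_{\varepsilon}(u)-\tfrac{1}{2}I'_{\varepsilon}(u)[u]=\int_{\mathbb{R}^{N}}\Bigl[\tfrac{1}{2}f(u)u-F(u)\Bigr]\geq 0.
\]

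For the case $V_{\infty}=+\infty$, I would invoke the compact embedding $W_{\varepsilon}\hookrightarrow L^{p}(\mathbb{R}^{N})$ for $p\in[2,2^{*}_{s})$ quoted from \cite[Lemma 3.2]{Cheng}. Then by \eqref{f_{2}}--\eqref{f_{3}}, $\int f(u_{n})u_{n}\to \int f(u)u$. Combining with $I'_{\varepsilon}(u_{n})[u_{n}]=o_{n}(1)$ and $I'_{\varepsilon}(u)[u]=0$ gives $\|u_{n}\|_{W_{\varepsilon}}\to \|u\|_{W_{\varepsilon}}$, and since $W_{\varepsilon}$ is a Hilbert space this forces strong convergence.

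For the case $V_{\infty}<+\infty$ and $c<m(V_{\infty})$, I would set $v_{n}:=u_{n}-u$ and use a Brezis--Lieb type splitting, which for the Hilbert norm reads
\[
\|u_{n}\|_{W_{\varepsilon}}^{2}=\|v_{n}\|_{W_{\varepsilon}}^{2}+\|u\|_{W_{\varepsilon}}^{2}+o_{n}(1),
\]
and whose version for $F$, namely $\int F(u_{n})=\int F(v_{n})+\int F(u)+o_{n}(1)$, follows from the subcritical growth \eqref{f_{2}}--\eqref{f_{3}} and almost everywhere convergence. The analogous identity for $\int f(u_{n})u_{n}$ together with $I'_{\varepsilon}(u)=0$ then yields
\[
I_{\varepsilon}(v_{n})=c-I_{\varepsilon}(u)+o_{n}(1),\qquad I'_{\varepsilon}(v_{n})\to 0,
\]
with $v_{n}\rightharpoonup 0$ in $W_{\varepsilon}$. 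If $v_{n}\not\to 0$, Lemma \ref{gio9} applies and gives $c-I_{\varepsilon}(u)\geq m(V_{\infty})$, which combined with $I_{\varepsilon}(u)\geq 0$ contradicts $c<m(V_{\infty})$. Hence $v_{n}\to 0$, i.e.\ $u_{n}\to u$ strongly.

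The main obstacle I anticipate is making the Brezis--Lieb type decomposition rigorous for the nonlocal ingredient: one has to check that the Gagliardo (or equivalently the $(-\Delta)^{s/2}$) quadratic form behaves like a Hilbertian norm under weak convergence, which it does since it comes from an inner product on $H^{s}(\mathbb{R}^{N})$, but writing out the splitting for $\int f(u_{n})u_{n}$ and $\int F(u_{n})$ requires the subcritical bounds together with a standard truncation/Vitali argument. Once this is in place the proof is essentially mechanical.
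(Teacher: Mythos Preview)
Your proposal is correct and follows essentially the same route as the paper: boundedness from \eqref{f_{4}}, weak limit $u$ is critical with $I_{\varepsilon}(u)\geq 0$, then in the case $V_{\infty}<\infty$ a Brezis--Lieb splitting for $v_{n}=u_{n}-u$ feeds into Lemma~\ref{gio9} to force $v_{n}\to 0$, while for $V_{\infty}=\infty$ the compact embedding of $W_{\varepsilon}$ into $L^{p}(\mathbb{R}^{N})$ gives strong convergence directly. The only cosmetic difference is that in the case $V_{\infty}=\infty$ the paper still passes through $v_{n}=u_{n}-u$ and shows $\|v_{n}\|_{W_{\varepsilon}}^{2}=\int f(v_{n})v_{n}=o_{n}(1)$, whereas you argue equivalently via $\|u_{n}\|_{W_{\varepsilon}}\to\|u\|_{W_{\varepsilon}}$.
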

\begin{proof}
Let $\{u_{n}\}\subset W_{\varepsilon}$ be such
that $I_{\varepsilon}(u_{n})\rightarrow c$ and
$I'_{\varepsilon}(u_{n})\rightarrow 0$. By standard calculations, we
can see that $\{u_{n}\}$ is bounded in $W_{\varepsilon}$. Thus there
exists $u \in W_{\varepsilon}$ such that, up to a subsequence,
$u_{n}\rightharpoonup u$ in $W_{\varepsilon}$  
and we see that   $I'_{\varepsilon}(u)=0$.

Defining $v_{n}:= u_{n} - u$, by \cite{Alves3}
 we  know that $\int_{\mathbb R^{N}} F(v_{n})=\int_{\mathbb R^{N}}F(u_{n})-\int_{\mathbb R^{N}}F(u)+o(1)$
 and arguing as in
\cite{giovany} we have also 
$I'_{\varepsilon}(v_{n})\rightarrow 0$.
Then
\begin{equation}\label{cd}
I_{\varepsilon}(v_{n})=I_{\varepsilon}(u_{n})-I_{\varepsilon}(u)+
o_{n}(1)=c - I_{\varepsilon}(u)+o_{n}(1)=:d +o_{n}(1)
\end{equation}
and $\{v_{n}\}$ is a $(PS)_{d}$ sequence.
By \eqref{f_{4}},
\begin{eqnarray*}
I_{\varepsilon}(u)= I_{\varepsilon}(u)- \frac{1}{2}I'_{\varepsilon}(u)[u]=
\int_{\mathbb R^{N}}[\frac{1}{2}f(u)u - F(u)]\geq 0,
\end{eqnarray*}
and then, if $V_{\infty}<\infty$ and $c<m(V_{\infty})$, by \eqref{cd} we obtain
$$
d\leq c < m({V_{\infty}}).
$$
It follows from Lemma \ref{gio9} that $v_{n}\rightarrow 0$,
that is $u_{n}\rightarrow u$ in $W_{\varepsilon}$.

In the case  $V_{\infty}= \infty$ by 
the compact imbedding $W_{\varepsilon}
\hookrightarrow\hookrightarrow L^{p}(\mathbb R^{N}), 2 \leq p < 2^{*}_{s}$,
up to a subsequence, $v_{n}\rightarrow 0$ in
$L^{p}(\mathbb R^{N})$ and by \eqref{f_{2}} and \eqref{f_{3}}
\begin{eqnarray*}
\| v_{n}\|_{W_\varepsilon} ^{2}= \int_{\mathbb R^{N}}f(v_{n})v_{n}=o_{n}(1).
\end{eqnarray*}
This last equality implies that $u_{n}\rightarrow u$ in
$W_{\varepsilon}$.
\end{proof}

The next proposition is a direct consequence of the previous one, 
but for completeness we give the proof.

\begin{prop}\label{gio13}
The functional $I_{\varepsilon}$ restricted to $\cal{N}_{\varepsilon}$
satisfies the $(PS)_{c}$ condition 
\begin{itemize}
\item[1.] at any level $c <m({V_{\infty}})$, if $V_{\infty}<\infty$, \medskip
\item[2.]  at any level $c\in\mathbb R$, if $V_{\infty}=\infty$.
\end{itemize}
\end{prop}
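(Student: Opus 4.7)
The plan is to show that every $(PS)_c$ sequence for the constrained functional $I_\varepsilon|_{\mathcal N_\varepsilon}$ is in fact a $(PS)_c$ sequence for the unconstrained $I_\varepsilon$ on $W_\varepsilon$; then Proposition \ref{gio11} immediately applies.

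Let $\{u_n\}\subset\mathcal N_\varepsilon$ satisfy $I_\varepsilon(u_n)\to c$ and $(I_\varepsilon|_{\mathcal N_\varepsilon})'(u_n)\to 0$. First, $W_\varepsilon$-boundedness of $\{u_n\}$ follows from \eqref{f_{4}} combined with the identity $J_\varepsilon(u_n)=0$ via the standard estimate
\[
c+o(1)=I_\varepsilon(u_n)-\tfrac12 I_\varepsilon'(u_n)[u_n]\geq\Big(\tfrac12-\tfrac1\theta\Big)\int_{\mathbb R^N}f(u_n)u_n=\Big(\tfrac12-\tfrac1\theta\Big)\|u_n\|^2_{W_\varepsilon}.
\]
Applying the Lagrange multiplier rule to the $C^1$-constraint $\mathcal N_\varepsilon=\{J_\varepsilon=0\}$, there exist $\lambda_n\in\mathbb R$ with $I_\varepsilon'(u_n)-\lambda_n J_\varepsilon'(u_n)\to 0$ in $W_\varepsilon^*$. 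Testing on $u_n$ and using $I_\varepsilon'(u_n)[u_n]=J_\varepsilon(u_n)=0$ gives $\lambda_n J_\varepsilon'(u_n)[u_n]\to 0$.

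The crux is to show that $J_\varepsilon'(u_n)[u_n]$ stays bounded away from zero, which then forces $\lambda_n\to 0$. A direct computation gives $J_\varepsilon'(u)[u]=-\int_{\mathbb R^N}u(f'(u)u-f(u))$, whose integrand is nonnegative by \eqref{f_{5}}. I argue by contradiction: if $J_\varepsilon'(u_n)[u_n]\to 0$ along a subsequence, then extracting a weak limit $u_n\rightharpoonup u$ in $W_\varepsilon$ together with pointwise a.e.\ convergence, Fatou's lemma forces $u(f'(u)u-f(u))=0$ a.e.; the strict monotonicity in \eqref{f_{5}} then yields $u\equiv 0$. Next, either via the compact embedding $W_\varepsilon\hookrightarrow\hookrightarrow L^p(\mathbb R^N)$ when $V_\infty=\infty$, or via Lemma \ref{lionslemma} combined with the translation/concentration argument developed in Lemma \ref{gio9} when $V_\infty<\infty$, one deduces $u_n\to 0$ in $L^p(\mathbb R^N)$ for $p\in(2,2^*_s)$. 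Growth conditions \eqref{f_{2}} and \eqref{f_{3}} then yield $\int f(u_n)u_n\to 0$, contradicting the identity $\|u_n\|^2_{W_\varepsilon}=\int f(u_n)u_n\geq k_\varepsilon^2>0$ from the preliminary Nehari lemma.

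Once $\lambda_n\to 0$, the subcritical growth of $f$ together with the $W_\varepsilon$-boundedness of $\{u_n\}$ ensures $J_\varepsilon'(u_n)$ is bounded in $W_\varepsilon^*$, hence $\lambda_n J_\varepsilon'(u_n)\to 0$ and therefore $I_\varepsilon'(u_n)\to 0$. Thus $\{u_n\}$ is a $(PS)_c$ sequence for $I_\varepsilon$ on $W_\varepsilon$ at a level admissible for Proposition \ref{gio11}, which delivers the strong convergence. The principal obstacle is the quantitative lower bound $|J_\varepsilon'(u_n)[u_n]|\geq\sigma>0$; the delicate case is $V_\infty<\infty$, where loss of mass at infinity must be ruled out by a concentration-compactness/translation argument in the spirit of Lemma \ref{gio9}.
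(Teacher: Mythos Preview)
Your overall strategy coincides with the paper's: write the Lagrange-multiplier relation $I_\varepsilon'(u_n)=\lambda_n J_\varepsilon'(u_n)+o_n(1)$, test on $u_n$ to obtain $\lambda_n J_\varepsilon'(u_n)[u_n]=o_n(1)$, conclude $I_\varepsilon'(u_n)\to 0$, and invoke Proposition~\ref{gio11}. Where the paper simply records that $J_\varepsilon'(u_n)$ vanishes on $T_{u_n}\mathcal N_\varepsilon$ and writes ``Hence $\lambda_n J_\varepsilon'(u_n)=o_n(1)$'', you instead force $\lambda_n\to 0$ by establishing a uniform lower bound on $|J_\varepsilon'(u_n)[u_n]|$. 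That extra work is reasonable (and in fact clarifies a step the paper compresses), but two points in your argument for the case $V_\infty<\infty$ need correction.

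First, invoking Lemma~\ref{gio9} here is circular: its hypothesis is that $\{v_n\}$ is a $(PS)$ sequence for the \emph{free} functional $I_\varepsilon$, which is precisely what you are trying to establish. What your contradiction argument actually requires is only Lions' dichotomy (Lemma~\ref{lionslemma}) together with the translation-invariance of the nonnegative integral $\int u_n\big(f'(u_n)u_n-f(u_n)\big)$ and Fatou's lemma; no appeal to Lemma~\ref{gio9} is needed or legitimate.

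Second, in the non-vanishing branch, after translating $v_n:=u_n(\cdot+y_n)\rightharpoonup v$ your Fatou step only yields $v^+\equiv 0$, because by \eqref{f_{1}} the integrand $(f'(t)t-f(t))t$ vanishes for $t\le 0$. This is not yet a contradiction: elements of $\mathcal N_\varepsilon$ may carry a nontrivial negative part, and Lions applied to $u_n$ could detect concentration of $u_n^-$. The fix is to run Lemma~\ref{lionslemma} on $u_n^+$ instead. Since $\|u_n\|_{W_\varepsilon}^2=\int f(u_n)u_n=\int f(u_n^+)u_n^+\ge k_\varepsilon^2$, the growth conditions \eqref{f_{2}}--\eqref{f_{3}} preclude vanishing of $u_n^+$; hence one finds $y_n$ with $\int_{B_R(y_n)}(u_n^+)^2\ge c>0$, the translated weak limit satisfies $v^+\not\equiv 0$, and Fatou now gives the contradiction you want.
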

\begin{proof}Let $\{u_{n}\}\subset \cal{N}_{\varepsilon}$ be such
that $I_{\varepsilon}(u_{n})\rightarrow c$ and 
for some sequence 
$\{\lambda_{n}\} \subset \mathbb R$,
\begin{eqnarray}\label{contra}
I'_{\varepsilon}(u_{n}) = \lambda_{n}J'_{\varepsilon}(u_{n}) + o_{n}(1),
\end{eqnarray}
where $J_{\varepsilon}:W_{\varepsilon}\rightarrow \mathbb R$ is defined in \eqref{j}.
Again we can deduce that $\{u_{n}\}$ is bounded. Now 
\begin{itemize}
\item[a)] evaluating \eqref{contra} in $u_{n}$ we get $\lambda_{n }J_{\varepsilon}'(u_{n})[u_{n}]=o_{n}(1)$, \medskip
\item[b)] evaluating \eqref{contra} in $v\in \mathrm{T}_{u_{n}}\mathcal N_{\varepsilon}$ we get $J_{\varepsilon}'(u_{n})[v]=0.$\medskip
\end{itemize}
Hence $\lambda_{n}J_{\varepsilon}'(u_{n})=o_{n}(1)$ and by \eqref{contra} we deduce $I_{\varepsilon}'(u_{n})=o_{n}(1)$.
Then $\{u_{n}\}$ is a $(PS)_{c}$ sequence for $I_{\varepsilon}$ and we conclude by  Proposition \ref{gio11}.
\end{proof}

\begin{coro}{\label{C1}} The constrained critical points of the functional $I_{\varepsilon}$ on
${\cal N}_{\varepsilon}$ are critical points of $I_{\varepsilon}$ in
$W_{\varepsilon}$.
\end{coro}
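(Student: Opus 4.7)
The plan is to use the Lagrange multiplier characterization of constrained critical points together with the nondegeneracy statement that $J'_\varepsilon(u)[u]<0$ on $\mathcal N_\varepsilon$ (the first item of the Lemma preceding Lemma \ref{geometriadamontanha}). The argument is essentially the ``static'' version of the computation done inside the proof of Proposition \ref{gio13}; no limit is involved.

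Concretely, let $u\in\mathcal N_\varepsilon$ be a critical point of $I_\varepsilon$ restricted to $\mathcal N_\varepsilon$. Since $\mathcal N_\varepsilon$ is a $C^1$ manifold of codimension one in $W_\varepsilon$ (locally a level set of $J_\varepsilon$ on which $J'_\varepsilon(u)$ is nonzero, as will be verified in the next step), the Lagrange multiplier rule gives some $\lambda\in\mathbb R$ with
\[
I'_\varepsilon(u)=\lambda\,J'_\varepsilon(u)\quad\text{in }W_\varepsilon^{*}.
\]
I would then test this identity against $u$ itself. On the left, $I'_\varepsilon(u)[u]=J_\varepsilon(u)=0$ simply because $u\in\mathcal N_\varepsilon$. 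On the right, by the cited lemma we have $J'_\varepsilon(u)[u]<0$. Therefore $\lambda\cdot J'_\varepsilon(u)[u]=0$ forces $\lambda=0$, and so $I'_\varepsilon(u)=0$ in $W_\varepsilon^{*}$, i.e.\ $u$ is a free critical point.

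Two small points to double-check before writing the final proof: first, that the Lagrange multiplier rule applies, which needs $J'_\varepsilon(u)\neq 0$ on $\mathcal N_\varepsilon$; but this is immediate from $J'_\varepsilon(u)[u]<0$. Second, that $I'_\varepsilon(u)[u]$ really equals $J_\varepsilon(u)$: this is the very definition of $J_\varepsilon$ in \eqref{j}. Both are routine.

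I do not anticipate a genuine obstacle: the whole statement is essentially algebraic once one has the strict inequality $J'_\varepsilon(u)[u]<0$ on $\mathcal N_\varepsilon$. The only thing worth flagging is that this strict sign, which ultimately follows from \eqref{f_{5}} (the monotonicity of $f(u)/u$), is what prevents the Lagrange multiplier from being nonzero; without \eqref{f_{5}}, the Nehari reduction would not be equivalent to the free problem.
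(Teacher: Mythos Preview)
Your argument is correct and is exactly the ``static'' version of the computation in Proposition~\ref{gio13} that the paper invokes: the Lagrange multiplier identity tested against $u$, combined with $I'_\varepsilon(u)[u]=J_\varepsilon(u)=0$ and $J'_\varepsilon(u)[u]<0$, forces $\lambda=0$. This is precisely the standard proof the paper alludes to.
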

\begin{proof} The standard proof follows by using similar
arguments explored in the last proposition.\end{proof}

Now let us pass to the functional related to the autonomous problem \eqref{limite}.
\begin{lem}[Ground state for the autonomous problem]\label{gio16}
Let $\{u_{n}\} \subset {\cal{M}}_{\mu}$ be a sequence satisfying
$E_{\mu}(u_{n})\rightarrow m({\mu})$. Then, up to subsequences the following alternative holds:
\begin{itemize}
\item[a)] $\{u_{n}\}$ strongly converges in $H^{s}(\mathbb R^{N})$; \medskip
\item[b)] there exists a sequence $\{\tilde{y}_{n}\}\subset
\mathbb R^{N}$ such that $u_{n}(\cdot+\tilde{y}_{n})$ strongly converges  in $H^{s}(\mathbb R^{N})$.
\end{itemize}
In particular, there exists a minimizer $\mathfrak w_{\mu}\geq0$ for $m({\mu})$.
\end{lem}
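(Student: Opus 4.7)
The plan is to combine Ekeland's variational principle with a concentration-compactness dichotomy (via the Lions-type Lemma \ref{lionslemma}) on the translation-invariant autonomous functional $E_\mu$, exploiting the fact that $\mathcal{M}_\mu$ is radially diffeomorphic to the unit sphere and bounded away from $0$.

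First I would show that $\{u_n\}$ is bounded in $H^s(\mathbb R^N)$: using $u_n \in \mathcal{M}_\mu$ and $E_\mu(u_n) \to m(\mu)$ together with assumption \eqref{f_{4}}, the standard computation
\[
m(\mu) + o_n(1) = E_\mu(u_n) - \frac{1}{\theta} E_\mu'(u_n)[u_n] \geq \left(\frac{1}{2} - \frac{1}{\theta}\right) \|u_n\|_\mu^2
\]
(where the second term vanishes since $u_n \in \mathcal{M}_\mu$) gives boundedness. Next I apply Ekeland's variational principle on the Nehari manifold $\mathcal{M}_\mu$ to replace $\{u_n\}$ by a sequence satisfying $E_\mu(u_n) \to m(\mu)$ and $E_\mu'|_{\mathcal{M}_\mu}(u_n) \to 0$; reasoning as in Proposition \ref{gio13} this yields a free $(PS)_{m(\mu)}$ sequence for $E_\mu$ in $H^s(\mathbb R^N)$.

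Now I would invoke Lemma \ref{lionslemma} with $r = 2$: if vanishing held, then $u_n \to 0$ in $L^p(\mathbb R^N)$ for every $p \in (2, 2_s^*)$, and by \eqref{f_{1}}--\eqref{f_{3}} we would get $\int f(u_n) u_n \to 0$, hence $\|u_n\|_\mu^2 \to 0$, contradicting the fact that $\|u\|_\mu$ is bounded away from $0$ on $\mathcal{M}_\mu$. Therefore non-vanishing occurs: there exist $\{\tilde y_n\} \subset \mathbb R^N$, $R > 0$ and $c > 0$ with $\int_{B_R(\tilde y_n)} u_n^2 \geq c$. If $\{\tilde y_n\}$ admits a bounded subsequence, we are in case (a); otherwise, set $\tilde u_n := u_n(\cdot + \tilde y_n)$. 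By translation-invariance of $E_\mu$, $\{\tilde u_n\}$ is still a bounded $(PS)_{m(\mu)}$ sequence, so up to subsequence $\tilde u_n \rightharpoonup \mathfrak w_\mu$ in $H^s(\mathbb R^N)$, with $\mathfrak w_\mu \neq 0$ thanks to the $L^2$-bump estimate and the local compactness of the embedding $H^s(B_R) \hookrightarrow L^2(B_R)$. A standard passage to the limit shows $E_\mu'(\mathfrak w_\mu) = 0$, so $\mathfrak w_\mu \in \mathcal{M}_\mu$ and hence $E_\mu(\mathfrak w_\mu) \geq m(\mu)$.

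To upgrade weak to strong convergence, I use the reverse inequality via Fatou applied to $E_\mu(u) - \tfrac12 E_\mu'(u)[u] = \int [\tfrac12 f(u)u - F(u)]$ (which is nonnegative and increasing in $u$ by \eqref{f_{4}}--\eqref{f_{5}}):
\[
m(\mu) \leq E_\mu(\mathfrak w_\mu) = \int_{\mathbb R^N}\!\Big[\tfrac12 f(\mathfrak w_\mu)\mathfrak w_\mu - F(\mathfrak w_\mu)\Big] \leq \liminf_n \int_{\mathbb R^N}\!\Big[\tfrac12 f(\tilde u_n)\tilde u_n - F(\tilde u_n)\Big] = m(\mu),
\]
so $E_\mu(\mathfrak w_\mu) = m(\mu)$ and Brezis–Lieb together with the vanishing of $\int[\tfrac12 f(v_n)v_n - F(v_n)]$ (where $v_n = \tilde u_n - \mathfrak w_\mu$) forces $v_n \to 0$ in $H^s(\mathbb R^N)$. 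Finally, because $f(u) = 0$ for $u \leq 0$ by \eqref{f_{1}}, replacing $\mathfrak w_\mu$ by $|\mathfrak w_\mu|$ does not increase $E_\mu$ nor leave $\mathcal{M}_\mu$, so we may assume $\mathfrak w_\mu \geq 0$. The main delicate point will be ruling out vanishing and then pinning down the non-triviality of the weak limit after translation; everything else is the translation-invariance machinery together with the strict monotonicity \eqref{f_{5}} already used in Lemma \ref{gio9}.
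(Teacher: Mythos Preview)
Your proposal is correct and follows essentially the same route as the paper: Ekeland's principle to upgrade the minimizing sequence to a $(PS)_{m(\mu)}$ sequence, then the Lions-type dichotomy (Lemma~\ref{lionslemma}) and translation invariance of $E_\mu$ to locate a nontrivial weak limit, which is the desired minimizer. The paper organizes the dichotomy slightly differently (it splits on whether the weak limit of $u_n$ itself is zero, rather than on boundedness of $\tilde y_n$), and it leaves the passage from $E_\mu(\mathfrak w_\mu)=m(\mu)$ to strong convergence implicit, whereas you spell it out via the Fatou/Brezis--Lieb argument; but these are the same ideas.
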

This result is known in the literature, but for completeness
we give here the proof.
\begin{proof}By the Ekeland Variational Principle
we may suppose that $\{{u}_{n}\}$ is
a $(PS)_{m(\mu)}$ sequence for $E_{\mu}$. Thus going to a subsequence if
necessary, we have that $u_{n}\rightharpoonup u$ weakly in
$H^{s}(\mathbb R^{N})$ and it is easy to verify that  $E_{\mu}'(u)=0$.

In case $u\neq 0$, then $\mathfrak w_{\mu}:=u$ is a ground state
solution of the autonomous problem  \eqref{limite}, that is,
$E_{\mu}(\mathfrak w_{\mu})=m(\mu)$.

In case $u\equiv 0$, applying the same arguments employed in the proof
of Lemma \ref{gio5}, there exists a sequence $\{\tilde{y}_{n}\} \subset
\mathbb R^{N}$ such that
\begin{eqnarray*}
v_{n}\rightharpoonup v \ \ \text{in} \ \ H^{s}(\mathbb R^{N})
\end{eqnarray*}
where $v_{n}:={u}_{n}(\cdot + \tilde{y}_{n})$. Therefore, $\{v_{n}\}$ is also a
$(PS)_{m(\mu)}$ sequence of $E_{\mu}$ and $v\not\equiv 0$. It
follows from the above arguments that setting $\mathfrak w_{\mu}:=v$
it is the ground state solution we were looking for.

In both cases, it is easy to see that  $\mathfrak w_{\mu}\geq0$ and the proof of the
lemma is finished.
\end{proof}

\subsection{Proof of Theorem \ref{gio12}}

By Lemma \ref{geometriadamontanha}, the
functional $I_{\varepsilon}$ has the geometry of the Mountain Pass Theorem in $W_{\varepsilon}.$
Then by well known results
there exists $\{u_{n}\}\subset  W_{\varepsilon}$
satisfying
$$
I_{\varepsilon}(u_{n})\rightarrow c_{\varepsilon}\,\,\, \mbox{and}\,\,\,
I'_{\varepsilon}(u_{n})\rightarrow 0.
$$
\noindent {\bf  case I: $V_{\infty} = \infty$}. By Proposition \ref{gio11}, 
$\{u_{n}\}$ strongly converges to some $\mathfrak u_{\varepsilon}$
in $H^{s}(\mathbb R^{N})$, which satisfies
$$
I_{\varepsilon}(\mathfrak u_{\varepsilon})=c_{\varepsilon}\,\,\, \mbox{and}\,\,\,
I'_{\varepsilon}(\mathfrak u_{\varepsilon})=0.
$$
\noindent{\bf case II: $V_{\infty} < \infty$}. In virtue of  Proposition \ref{gio11} we just need to show that
$c_{\varepsilon} < m({V_{\infty}})$. 
Suppose without loss of generality that $0\in M$, i.e.
$$
V(0)=V_{0}.
$$
Let $\mu \in (V_{0}, V_{\infty})$, so that 
\begin{equation}\label{contradiz}
m({V_{0}})<m({\mu})<m({V_{\infty}}).
\end{equation}
For $r>0$ let $\eta_{r}$ a smooth cut-off function
in $\mathbb R^{N}$ which equals $1$ on $B_{r}$ and with support in $B_{2r}.$
Let $w_{r}:=\eta_{r} \mathfrak w_{\mu}$ and $t_{r}>0$ such that $t_{r}w_{r}\in \mathcal M_{\mu}.$
If it were, for every $r>0: E_{\mu}(t_{r}w_{r})\geq m(V_{\infty})$, since $w_{r}\to \mathfrak w_{\mu}$ in $H^{s}(\mathbb R^{N})$
for $r\to +\infty$, we would have $t_{r}\to 1$ and then
$$m(V_{\infty})\leq\liminf_{r\to+\infty}E_{\mu}(t_{r}w_{r})=E_{\mu}(\mathfrak w_{\mu})=m(\mu)$$
which contradicts \eqref{contradiz}. Then there exists $\overline r>0$ such that $\phi:= t_{\bar r} w_{\bar r}$
satisfies $E_{\mu}(\phi)<m(V_{\infty}).$
Condition \eqref{V} implies that
for some $\overline{\varepsilon}>0$
$$
V(\varepsilon x)\leq \mu,\,\,\, \mbox{for all}\,\,\,x \in
\mbox{supp}\, \phi \,\,\, \mbox{and}\,\,\, \varepsilon \leq
\overline{\varepsilon},
$$
so
\begin{eqnarray*}
\int_{\mathbb R^{N}}V(\varepsilon x) \phi ^{2} \leq
\mu \int_{\mathbb R^{N}} \phi ^{2}\,\,\, \mbox{for all}\,\,\,
\varepsilon \leq \overline{\varepsilon}
\end{eqnarray*}
and consequently
$$
I_{\varepsilon}(t \phi)\leq E_{\mu}(t \phi)\leq E_{\mu}(\phi)\,\,\, \mbox{for all}\,\,\, t> 0.
$$
Therefore
$
\max_{t>0}I_{\varepsilon}(t\phi)\leq E_{\mu}(\phi),
$
and  then $$c_{\varepsilon}<m(V_{\infty})$$
which conclude the proof.

%

\medskip



\section{The barycenter map}\label{Bary}

Up to now $\varepsilon$ was fixed in our considerations.
Now we deal with the case $\varepsilon\to 0^{+}.$
The next result will be fundamental when we implement
the ``barycenter machinery'' below.

\begin{prop}\label{gio17}
Let $\varepsilon_{n}\rightarrow 0$ and $\{u_{n}\}\subset
{\cal{N}}_{\varepsilon_{n}}$ be such that
$I_{\varepsilon_{n}}(u_{n})\rightarrow m({V_{0}})$. Then there exists
a sequence $\{\tilde{y}_{n}\}\subset \mathbb R^{N}$ such that
$u_{n}(\cdot+\tilde{y}_{n})$ has a convergent subsequence in
$H^{s}(\mathbb R^{N})$. Moreover, up to a subsequence,
$y_{n}:=\varepsilon_{n}\tilde{y}_{n}\rightarrow y \in M $.
\end{prop}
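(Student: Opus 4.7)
The plan is a standard concentration--compactness argument: I first extract translations $\tilde y_n$ from $\{u_n\}$ via Lions' lemma, then prove the rescaled centres $y_n := \varepsilon_n \tilde y_n$ are bounded by comparing $I_{\varepsilon_n}$ with the limit autonomous functionals, and finally identify the cluster point as a minimum of $V$ using Lemma \ref{gio16}.

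\textbf{Non-vanishing.} The usual \eqref{f_{4}} manipulation bounds $\{u_n\}$ in $W_{\varepsilon_n}$, while the Nehari identity $\|u_n\|^{2}_{W_{\varepsilon_n}} = \int f(u_n)u_n$ combined with \eqref{f_{2}}--\eqref{f_{3}} yields a uniform lower bound $\|u_n\|_{W_{\varepsilon_n}} \geq k > 0$. Were $\{u_n\}$ vanishing in the sense of Lemma \ref{lionslemma}, then $u_n \to 0$ in $L^{q+1}(\mathbb R^N)$ would force $\int f(u_n) u_n \to 0$, contradicting the lower bound. Lions' lemma thus provides $\tilde y_n \in \mathbb R^N$ and constants $R,c>0$ with $\int_{B_R(\tilde y_n)} u_n^2 \geq c$. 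Set $v_n := u_n(\cdot + \tilde y_n)$ and $y_n := \varepsilon_n \tilde y_n$; along a subsequence $v_n \rightharpoonup v \neq 0$ in $H^s(\mathbb R^N)$, and $v_n$ lies on the Nehari manifold of the translated functional $\tilde I_n(w) := \tfrac12 \|(-\Delta)^{s/2} w\|_2^2 + \tfrac12 \int V(\varepsilon_n z + y_n) w^2 - \int F(w)$, with $\tilde I_n(v_n) = I_{\varepsilon_n}(u_n) \to m(V_0)$.

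\textbf{Boundedness of $y_n$ and identification $y_0 \in M$.} Suppose, for a contradiction, $|y_n| \to \infty$. For each fixed $z\in\mathbb R^N$, $|\varepsilon_n z + y_n| \to \infty$, hence by \eqref{V} $\liminf_n V(\varepsilon_n z + y_n) \geq V_\infty$. Weak lower semicontinuity of the fractional gradient norm, Fatou on the potential term, and Fatou (or a Brezis--Lieb splitting) on $F$ give, for every $t>0$,
\[
E_{V_\infty}(tv) \;\leq\; \liminf_n \tilde I_n(t v_n) \;\leq\; \tilde I_n(v_n) \;\to\; m(V_0),
\]
the middle inequality because $v_n$ maximises $s \mapsto \tilde I_n(s v_n)$ on $(0,+\infty)$. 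Passing to $\sup_{t>0}$ on the left and using $v \neq 0$, the minimax characterisation \eqref{mplimite} yields $m(V_\infty) \leq m(V_0)$, contradicting $m(V_0) < m(V_\infty)$ (a consequence of $V_0 < V_\infty$ and the strict monotonicity of $\mu \mapsto m(\mu)$, itself a straightforward consequence of \eqref{f_{5}}). The case $V_\infty = +\infty$ is similar, Fatou forcing $\tilde I_n(v_n) \to +\infty$. Hence, up to subsequence, $y_n \to y_0$. Now pick $t_n>0$ with $t_n v_n \in \mathcal M_{V_0}$. Since $V(\varepsilon_n z + y_n) \geq V_0$ and $v_n$ is a Nehari maximiser for $\tilde I_n$,
\[
m(V_0) \;\leq\; E_{V_0}(t_n v_n) \;\leq\; \tilde I_n(t_n v_n) \;\leq\; \tilde I_n(v_n) \;\to\; m(V_0),
\]
so $\{t_n v_n\}$ is minimising for $m(V_0)$. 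Using \eqref{f_{4}}--\eqref{f_{5}} and $v \neq 0$ one checks $t_n \to t_0 \in (0,+\infty)$; Lemma \ref{gio16} applied to $\{t_n v_n\}$---the translation alternative being ruled out by $\int_{B_R(0)} v_n^2 \geq c$---gives $t_n v_n \to t_0 v$ strongly in $H^s$, hence $v_n \to v$ strongly. Passing to the limit in the Nehari identity for $\tilde I_n$ via continuity of $V$, one finds $v \in \mathcal M_{V(y_0)}$, whence $m(V(y_0)) \leq E_{V(y_0)}(v) = \lim_n \tilde I_n(v_n) = m(V_0)$; strict monotonicity of $m(\cdot)$ combined with $V(y_0) \geq V_0$ forces $V(y_0) = V_0$, i.e.\ $y_0 \in M$.

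The main technical obstacle is the Fatou step on the potential term $\int V(\varepsilon_n z + y_n) v_n^2$: \eqref{V} only provides $\liminf$-information at infinity (not a pointwise limit), and a priori $v_n$ is merely $L^2_{loc}$-convergent, so Fatou on the product must be combined with either the strong $L^2$-convergence recovered a posteriori or a cut-off. This is straightforward when $V_\infty < \infty$ but needs a truncation argument in the coercive case $V_\infty = +\infty$; once it is in place, everything else is bookkeeping around Lions' lemma and Lemma \ref{gio16}.
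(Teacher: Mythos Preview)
Your overall strategy matches the paper's, but you have reordered the steps in a way that creates a genuine gap. In the boundedness-of-$y_n$ argument (case $V_\infty<\infty$) you claim
\[
E_{V_\infty}(tv)\;\leq\;\liminf_n \tilde I_n(t v_n)
\]
from only weak convergence $v_n\rightharpoonup v$, invoking ``Fatou (or a Brezis--Lieb splitting) on $F$''. But both go the \emph{wrong} way: since $F\geq 0$, Fatou gives $\int F(tv)\leq\liminf\int F(tv_n)$, and Brezis--Lieb gives $\int F(tv_n)=\int F(tv)+\int F(t(v_n-v))+o(1)\geq\int F(tv)+o(1)$. In either case $-\int F(tv_n)$ is only \emph{upper} semicontinuous, so the displayed inequality does not follow. (You flag the potential term as the delicate one; in fact that term is fine by Fatou --- it is the $F$ term that fails.)

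The paper avoids this by reversing the order. It \emph{first} projects $v_n$ onto $\mathcal M_{V_0}$ via $t_n$, observes
\[
m(V_0)\;\leq\;E_{V_0}(t_nv_n)\;\leq\;I_{\varepsilon_n}(t_nu_n)\;\leq\;I_{\varepsilon_n}(u_n)\to m(V_0)
\]
(using only $V\geq V_0$, valid everywhere, and the Nehari maximality of $u_n$), and then applies Lemma~\ref{gio16} to obtain \emph{strong} convergence $t_nv_n\to\tilde v$, hence $v_n\to v$, in $H^s(\mathbb R^N)$ --- all of this before anything is said about $y_n$. Only afterwards, with strong convergence in hand (so that $\int F(\tilde v_n)\to\int F(\tilde v)$ by continuity), does it run the chain $m(V_0)=E_{V_0}(\tilde v)<E_{V_\infty}(\tilde v)\leq\liminf I_{\varepsilon_n}(t_nu_n)\leq m(V_0)$ to rule out $|y_n|\to\infty$. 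Your later paragraph already contains precisely this $\mathcal M_{V_0}$-projection/Lemma~\ref{gio16} step, and it nowhere uses the boundedness of $y_n$; simply moving it \emph{before} the boundedness argument closes the gap and recovers the paper's proof.
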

\begin{proof}Arguing as in the proof of Lemma \ref{gio5}, we
obtain a sequence $\{\tilde{y}_{n}\}\subset \mathbb R^{N}$ and constants
$R,c>0$ such that
\begin{eqnarray*}
\liminf_{n\rightarrow \infty}
\int_{B_{R}(\tilde{y}_{n})} u_{n}^{2}\geq c
> 0 .
\end{eqnarray*}
Thus, if $v_{n}:= u_{n}(\cdot + \tilde{y}_{n})$, up to a
subsequence, $v_{n}\rightharpoonup v \not\equiv 0$ in $H^{s}(\mathbb R^{N})$. Let $t_{n}>0$ be such that $\tilde{v}_{n}:=t_{n}v_{n}
\in {\cal{M}}_{V_{0}}$. Then,
$$
E_{V_{0}}(\tilde{v}_{n})\rightarrow m({V_{0}}).
$$
Since $\{t_{n}\}$ is bounded, so is the sequence $\{\tilde{v}_{n}\}$, thus for some subsequence,
$\tilde{v}_{n}\rightharpoonup \tilde{v}$  in $H^{s}(\mathbb R^{N})$. Moreover, reasoning as in \cite{giovany}, up to some
subsequence still denoted with $\{t_{n}\}$, we can assume that
$t_{n}\rightarrow t_{0}>0$, and this limit implies that
$\tilde{v}\not\equiv 0$. From Lemma \ref{gio16},
$\tilde{v}_{n}\rightarrow \tilde{v}$ in $H^{s}(\mathbb R^{N})$, and
so $v_{n}\rightarrow v$ in $H^{s}(\mathbb R^{N})$.

Now, we will show that $\{y_{n}\}:=\{\varepsilon_{n}\tilde{y}_{n}\}$ has a
subsequence verifying $y_{n}\rightarrow y \in M$.
First note that the sequence 
$\{y_{n}\}$ is bounded in $\mathbb R^{N}$.
 Indeed, assume by contradiction that (up to subsequences) $|y_{n}|\rightarrow \infty$. 

In case $V_{\infty}=\infty$, the  inequality
\begin{eqnarray*} \int_{\mathbb R^{N}}V(\varepsilon_{n} x + y_{n})
v_{n}^{2}\leq \int_{\mathbb R^{N}}| (-\Delta)^{s/2} v_{n}|^{2} +
\int_{\mathbb R^{N}}V(\varepsilon_{n} x + y_{n}) v_{n}^{2}=
\int_{\mathbb R^{N}}f(v_{n})v_{n},
\end{eqnarray*}
and the  Fatou's Lemma imply
\begin{eqnarray*}
\infty = \liminf_{n\rightarrow
\infty}\int_{\mathbb R^{N}}f(v_{n})v_{n}
\end{eqnarray*}
which is an absurd, since the sequence $\{f(v_{n})v_{n}\}$ is
bounded in $L^{1}(\mathbb R^{N})$.

Now let us consider the case $V_{\infty}< \infty$. Since
$\tilde{v}_{n}\rightarrow \tilde{v}$ in $H^{s}(\mathbb R^{N})$ and
$V_{0} < V_{\infty}$, we have
\begin{eqnarray*}
m({V_{0}})&=& \frac{1}{2}\int_{\mathbb R^{N}}| (-\Delta)^{s/2}
\tilde{v}|^{2}+ \frac{V_{0}}{2}\int_{\mathbb R^{N}}
\tilde{v}^{2}- \int_{\mathbb R^{N}}F(\tilde{v})\\
&<& \frac{1}{2}\int_{\mathbb R^{N}}| (-\Delta)^{s/2}\tilde{v}|^{2}+
\frac{V_{\infty}}{2}\int_{\mathbb R^{N}}\tilde{v}^{2}-
\int_{\mathbb R^{N}}F(\tilde{v})\\
&\leq& \liminf_{n\rightarrow
\infty}\biggl[\frac{1}{2}\int_{\mathbb R^{N}}| (-\Delta)^{s/2}
\tilde{v}_{n}|^{2}+ \frac{1}{2}\int_{\mathbb R^{N}}V(\varepsilon_{n}
x + y_{n}) \tilde{v}_{n}^{2}-
\int_{\mathbb R^{N}}F(\tilde{v}_{n})\biggl],
\end{eqnarray*}
\noindent or equivalently
\begin{eqnarray*}
m({V_{0}})< \liminf_{n\rightarrow
\infty}\biggl[\frac{t_{n}^{2}}{2}\int_{\mathbb R^{N}}| (-\Delta)^{s/2}
u_{n}|^{2}+ \frac{t_{n}^{2}}{2}\int_{\mathbb R^{N}}V(\varepsilon_{n}
z)u_{n}^{2}- \int_{\mathbb R^{N}}F(t_{n}u_{n})\biggl].
\end{eqnarray*}
The last inequality implies,
\begin{eqnarray*}
m({V_{0}})<  \liminf_{n\rightarrow
\infty}I_{\varepsilon_{n}}(t_{n}u_{n})\leq \liminf_{n\rightarrow
\infty}I_{\varepsilon_{n}}(u_{n})= m({V_{0}}),
\end{eqnarray*}
which is a contradiction. Hence, $\{y_{n}\}$ has to be  bounded and, up to
a subsequence, $y_{n}\rightarrow y\in\mathbb R^{N}$. If $y\not\in M$,
then $V(y)>V_{0}$ and we obtain a contradiction arguing as above.
Thus, $y\in M$ and the Proposition is proved. 
\end{proof}

\medskip

Let $\delta > 0$ be fixed 
and $\eta$ be a smooth nonincreasing
cut-off function defined in $[0,\infty)$ by
\begin{equation*}\label{eta}
\eta(s)=
\begin{cases}
1 & \mbox{ if } 0 \leq s \leq \delta/2\\
0 & \mbox{ if } s\geq \delta.
\end{cases}
\end{equation*}

Let $\mathfrak w_{V_{0}}$ be a ground state solution
given in Lemma \ref{gio16} of
problem \eqref{limite} with $\mu=V_{0}$ and
for any $y \in M$, let us define
\begin{eqnarray*}
\Psi_{\varepsilon , y }(x) := \eta (|\varepsilon x - y|)\mathfrak w_{V_{0}}\biggl(\frac{\varepsilon x -y}{\varepsilon}\biggl).
\end{eqnarray*}
Let $t_{\varepsilon}>0$ verifying
$\max_{t\geq 0}I_{\varepsilon}(t\Psi_{\varepsilon
,y})=I_{\varepsilon}(t_{\varepsilon}\Psi_{\varepsilon ,y}),
$
so that $t_{\varepsilon}\Psi_{\varepsilon,y}\in \mathcal N_{\varepsilon}$,
and let 
\begin{equation*}\label{Phi}
\Phi_{\varepsilon}: y\in M\mapsto  t_{\varepsilon}\Psi_{\varepsilon ,y}\in \cal{N}_{\varepsilon}.
\end{equation*}
By construction, $\Phi_{\varepsilon}(y)$ has compact support for any
$y\in M$ and  $\Phi_{\varepsilon}$ is a continuous map.

\medskip

The next result will help us to define a map from $M$
to a suitable sublevel in the Nehari manifold.

\begin{lem}\label{gio14}
The function $\Phi_{\varepsilon}$ satisfyes
\begin{eqnarray*}
\lim_{\varepsilon \rightarrow 0^{+}}I_{\varepsilon}(\Phi_{\varepsilon}(
y))=m({V_{0}}), \ \text{uniformly} \ in \ y \in M.
\end{eqnarray*}
\end{lem}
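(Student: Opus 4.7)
The plan is to argue by contradiction and reduce everything to the autonomous problem through the translation $z = x - y/\varepsilon_{n}$. Suppose the conclusion fails: then there exist $\delta_{0}>0$ and sequences $\varepsilon_{n}\to 0^{+}$, $y_{n}\in M$ with
$$
|I_{\varepsilon_{n}}(\Phi_{\varepsilon_{n}}(y_{n}))-m(V_{0})|\geq \delta_{0}.
$$
Hypothesis \eqref{V} forces $V_{0}<\liminf_{|x|\to\infty}V(x)$, so $M$ is bounded and, up to a subsequence, $y_{n}\to y_{0}\in M$; in particular $V$ is uniformly bounded by some constant $C>0$ on the $\delta$-neighbourhood of $M$.

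After the change of variable the test function becomes $\hat{\Psi}_{n}(z):=\eta(\varepsilon_{n}|z|)\mathfrak w_{V_{0}}(z)$, which is supported in $B_{\delta/\varepsilon_{n}}$ and, by dominated convergence (with dominant $\mathfrak w_{V_{0}}$), converges to $\mathfrak w_{V_{0}}$ in $H^{s}(\mathbb R^{N})$. Setting $t_{n}:=t_{\varepsilon_{n}}$, the Nehari condition $t_{n}\Psi_{\varepsilon_{n},y_{n}}\in \mathcal N_{\varepsilon_{n}}$ reads
$$
t_{n}^{2}\int_{\mathbb R^{N}}|(-\Delta)^{s/2}\hat{\Psi}_{n}|^{2}+t_{n}^{2}\int_{\mathbb R^{N}}V(\varepsilon_{n}z+y_{n})\hat{\Psi}_{n}^{2}=\int_{\mathbb R^{N}}f(t_{n}\hat{\Psi}_{n})\,t_{n}\hat{\Psi}_{n}.
$$
On the support of $\hat\Psi_{n}$ the point $\varepsilon_{n}z+y_{n}$ lies in the $\delta$-neighbourhood of $M$, where $V\leq C$; this, together with \eqref{f_{2}}--\eqref{f_{4}} and the fact that $\hat\Psi_n$ stays bounded away from $0$ in $H^{s}$, forces $\{t_{n}\}$ to be bounded above and below, so $t_{n}\to t_{0}>0$ along a subsequence. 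Passing to the limit (using dominated convergence with dominant $C\mathfrak w_{V_{0}}^{2}$ for the potential term and the growth of $f$ for the nonlinear term), the identity becomes
$$
t_{0}^{2}\int_{\mathbb R^{N}}|(-\Delta)^{s/2}\mathfrak w_{V_{0}}|^{2}+t_{0}^{2}V_{0}\int_{\mathbb R^{N}}\mathfrak w_{V_{0}}^{2}=\int_{\mathbb R^{N}}f(t_{0}\mathfrak w_{V_{0}})\,t_{0}\mathfrak w_{V_{0}}.
$$
Since $\mathfrak w_{V_{0}}\in \mathcal M_{V_{0}}$ already and hypothesis \eqref{f_{5}} guarantees a unique Nehari projection on each ray, we conclude $t_{0}=1$.

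Performing the same change of variable in the energy,
$$
I_{\varepsilon_{n}}(\Phi_{\varepsilon_{n}}(y_{n}))=\frac{t_{n}^{2}}{2}\int_{\mathbb R^{N}}|(-\Delta)^{s/2}\hat{\Psi}_{n}|^{2}+\frac{t_{n}^{2}}{2}\int_{\mathbb R^{N}}V(\varepsilon_{n}z+y_{n})\hat{\Psi}_{n}^{2}-\int_{\mathbb R^{N}}F(t_{n}\hat{\Psi}_{n}),
$$
and the same dominated convergence argument (together with $t_n\to 1$ and continuity of $F$) shows the right-hand side tends to $E_{V_{0}}(\mathfrak w_{V_{0}})=m(V_{0})$, contradicting the assumption. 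The main technical point is the control of the potential term $\int V(\varepsilon_{n}z+y_{n})\hat{\Psi}_{n}^{2}$; this is handled precisely because the support of $\hat{\Psi}_{n}$ corresponds, in the original variable $x$, to the set $\{|x-y_{n}|\leq\delta\}$, where $V$ is uniformly bounded thanks to the continuity of $V$ and the boundedness of $M$. The arbitrariness of the contradicting sequence yields the uniform convergence on $M$.
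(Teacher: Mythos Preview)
Your argument is correct and follows essentially the same route as the paper's proof: contradiction, change of variable $z=x-y_{n}/\varepsilon_{n}$, convergence $t_{\varepsilon_{n}}\to 1$, and dominated convergence in each term of the energy to reach $E_{V_{0}}(\mathfrak w_{V_{0}})=m(V_{0})$. You actually give more detail than the paper on why $t_{n}\to 1$ (the paper merely cites \cite{giovany,Alves}); just note a small slip in phrasing---in the original variable the support of $\Psi_{\varepsilon_{n},y_{n}}$ is $\{|\varepsilon_{n}x-y_{n}|\leq\delta\}$, not $\{|x-y_{n}|\leq\delta\}$, but your real point (that the argument $\varepsilon_{n}z+y_{n}$ of $V$ stays in the $\delta$-neighbourhood of $M$, where $V$ is bounded) is exactly right.
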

\begin{proof}Suppose by contradiction that the lemma is false.
Then there exist $\delta_{0}> 0$, $\{y_{n}\} \subset M$ and
$\varepsilon_{n}\rightarrow 0^{+}$ such that
\begin{eqnarray}\label{1eq7}
| I_{\varepsilon_{n}}(\Phi_{\varepsilon_{n}}(y_{n}))- m({V_{0}})|
\geq \delta _{0}.
\end{eqnarray}
Repeating the same arguments explored in \cite{giovany} (see also
\cite{Alves}), it is possible to check that
$t_{\varepsilon_{n}}\rightarrow 1$. From Lebesgue's Theorem, we can
check that
\begin{eqnarray*}\label{1eq9}
\lim_{n\rightarrow
\infty}\|\Psi_{\varepsilon_{n},y_{n}}\|^{2}_{\varepsilon_{n}}
= \| \mathfrak w_{V_{0}}\|^{2}_{V_{0}}
\end{eqnarray*}
and
\begin{eqnarray*}
\lim_{n\rightarrow
\infty}\int_{\mathbb R^{N}}F(\Psi_{\varepsilon_{n},y_{n}})=\int_{\mathbb R^{N}}F(\mathfrak w_{V_{0}}).
\end{eqnarray*}
Now, note that
\begin{eqnarray*}
I_{\varepsilon_{n}}(\Phi_{\varepsilon_{n}}(y_{n}))&=&
\frac{t^{2}_{\varepsilon_{n}}}{2}\int_{\mathbb R^{N}}\Big|(-\Delta)^{s/2}
(\eta(|\varepsilon_{n}z|)\mathfrak w_{V_{0}}(z))\Big|^{2} + \frac{t^{2}_{\varepsilon
_{n}}}{2}\int_{\mathbb R^{N}}V(\varepsilon_{n}z + y_{n})|\eta(|
\varepsilon_{n}z|)\mathfrak w_{V_{0}}(z)|^{2} \\
&-& \int_{\mathbb R^{N}}F(t_{\varepsilon
_{n}}\eta(|\varepsilon_{n}z|)\mathfrak w_{V_{0}}(z)).
\end{eqnarray*}
Letting $n\rightarrow\infty$, we get
$\lim_{n\rightarrow\infty}I_{\varepsilon_{n}}(\Phi_{\varepsilon_{n}}(y_{n}))=E_{V_{0}}(\mathfrak w_{V_{0}})=
m({V_{0}})$, which contradicts (\ref{1eq7}). Thus the Lemma holds.
\end{proof}

\bigskip

Observe  that by Lemma \ref{gio14}, $h(\varepsilon):=|I_{\varepsilon}(\Phi_{\varepsilon}(y))-m(V_{0})|=o(1)$
for $\varepsilon\to 0^{+}$ uniformly in $y$,
and then $I_{\varepsilon}(\Phi_{\varepsilon}(y))-m(V_{0})\leq h(\varepsilon)$.
In particular the set
\begin{equation}\label{subnehari}
{\cal{N}}^{m(V_{0})+h(\varepsilon)}_{\varepsilon}:=\Big\{u\in{\cal{N}}_{\varepsilon}:I_{\varepsilon}(u)\leq
m({V_{0}})+h(\varepsilon)\Big\}
\end{equation}
is not empty, since 
 for sufficiently small $\varepsilon$,
\begin{equation}\label{phiepsilon}
\forall\, y\in M: \Phi_{\varepsilon}(y)\in {\cal{N}}^{m(V_{0})+h(\varepsilon)}_{\varepsilon}.
\end{equation}

\bigskip

We are in a position now to define the barycenter map
that will send a convenient sublevel in the Nehari manifold
in a suitable neighborhood of $M$.
From now on we fix a  $\delta >0$ in such a way that
$M$ and 
$$M_{2\delta}:=\Big\{x\in \mathbb R^{N}: d(x,M)\leq 2\delta\Big\}$$ are homotopically equivalent
($d$ denotes the euclidean distance).
Let $\rho=\rho(\delta) > 0$ be such that
$M_{2\delta}\subset B_{\rho}$ and  $\chi : \mathbb R^{N}\rightarrow
\mathbb R^{N}$ be defined as
\begin{equation*}\label{chi}
\chi(x)= 
\begin{cases}
x & \mbox{ if } | x | \leq \rho\\
\rho \dis\frac{x}{|x|} & \mbox{ if } | x | \geq \rho.
\end{cases}
\end{equation*}
 Finally, let us consider the so called {\sl barycenter map}
$\beta_{\varepsilon}$ defined on functions with compact support $u\in W_{\varepsilon}$ by 
$$
\beta_{\varepsilon}(u):=\frac{\dis\int_{\mathbb R^{N}}\chi(\varepsilon x)
u^{2}(x)}{\dis\int_{\mathbb R^{N}}u^{2}(x)}\in \mathbb R^{N}.
$$
\begin{lem}\label{gio15} The function $\beta_{\varepsilon}$ satisfies
\begin{eqnarray*}
\lim_{\varepsilon \rightarrow 0^{+}}\beta_{\varepsilon}(\Phi_{\varepsilon}(y)) = y, \ \
\text{uniformly in }  y \in M.
\end{eqnarray*}
\end{lem}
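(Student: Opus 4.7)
The idea is a direct computation combined with dominated convergence. I would first simplify the barycenter by a change of variable and then read off the limit from the Lebesgue dominated convergence theorem; uniformity in $y \in M$ is handed to us for free by compactness of $M$, which follows from \eqref{V} ($V$ is continuous with $\liminf_{|x|\to\infty} V(x) = V_\infty > V_0$, so $M = V^{-1}(V_0)$ is closed and bounded, hence compact).

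After substituting $z = x - y/\varepsilon$ in both the numerator and the denominator defining $\beta_\varepsilon(\Phi_\varepsilon(y))$, the factor $t_\varepsilon^2$ cancels and one obtains
\[
\beta_\varepsilon(\Phi_\varepsilon(y)) - y \;=\; \frac{\dis\int_{\mathbb R^N}\bigl[\chi(\varepsilon z + y)-y\bigr]\,\eta(|\varepsilon z|)^2\,\mathfrak w_{V_0}(z)^2\,dz}{\dis\int_{\mathbb R^N}\eta(|\varepsilon z|)^2\,\mathfrak w_{V_0}(z)^2\,dz}.
\]
Since $M \subset M_{2\delta} \subset B_\rho$, for every fixed $z \in \mathbb R^N$ and every $y \in M$ we have $\varepsilon z + y \to y \in B_\rho$ as $\varepsilon \to 0^+$, so eventually $\chi(\varepsilon z + y) = \varepsilon z + y$ and thus $\chi(\varepsilon z + y) - y = \varepsilon z \to 0$ pointwise. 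Likewise $\eta(|\varepsilon z|)\to \eta(0)=1$ pointwise. The integrands are dominated by $2\rho\,\mathfrak w_{V_0}(z)^2$ and $\mathfrak w_{V_0}(z)^2$ respectively, which lie in $L^1(\mathbb R^N)$ because $\mathfrak w_{V_0}\in H^s(\mathbb R^N)$.

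To conclude with uniformity I would argue by contradiction: if the lemma were false, there would exist $\delta_0>0$, $\varepsilon_n\to 0^+$ and $y_n \in M$ with $|\beta_{\varepsilon_n}(\Phi_{\varepsilon_n}(y_n))-y_n|\geq \delta_0$. By compactness of $M$, up to a subsequence $y_n\to y_0\in M$, and the previous pointwise convergences together with the uniform $L^1$ domination give, via dominated convergence, that the numerator above tends to $0$ while the denominator tends to $\|\mathfrak w_{V_0}\|_2^2>0$; this contradicts $|\beta_{\varepsilon_n}(\Phi_{\varepsilon_n}(y_n))-y_n|\geq \delta_0$.

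No real obstacle is expected here: the lemma is essentially bookkeeping once the ansatz $\Phi_\varepsilon(y)$ is in place. The only mildly delicate point is ensuring that the cut-off factor $\eta(|\varepsilon z|)$, which depends on $\varepsilon$, does not interfere with dominated convergence, but it is uniformly bounded by $1$ and converges pointwise to $1$, so the standard argument applies.
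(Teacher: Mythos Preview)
Your proposal is correct and follows essentially the same approach as the paper: argue by contradiction with sequences $\varepsilon_n\to 0^+$, $y_n\in M$, perform the change of variable $z=x-y_n/\varepsilon_n$ to obtain the identity
\[
\beta_{\varepsilon_n}(\Phi_{\varepsilon_n}(y_n))-y_n=\frac{\int_{\mathbb R^N}[\chi(\varepsilon_n z+y_n)-y_n]\,\eta(|\varepsilon_n z|)^2\,\mathfrak w_{V_0}(z)^2\,dz}{\int_{\mathbb R^N}\eta(|\varepsilon_n z|)^2\,\mathfrak w_{V_0}(z)^2\,dz},
\]
and conclude via dominated convergence together with $M\subset B_\rho$. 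Your version is slightly more explicit about the dominating functions and the role of compactness of $M$, but the argument is the same.
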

\begin{proof} Suppose, by contradiction, that the lemma is
false. Then, there exist $\delta_{0}> 0$, $\{y_{n}\} \subset M$ and
$\varepsilon_{n}\rightarrow 0^{+}$ such that
\begin{eqnarray}\label{1eq11}
| \beta_{\varepsilon_{n}}(\Phi_{\varepsilon_{n}}(y_{n})) - y_{n}| \geq
\delta_{0}.
\end{eqnarray}
Using the definition of $\Phi_{\varepsilon_{n}}(y_{n}),\beta_{\varepsilon_{n}}$ and $\eta$ given above,
we have the  equality
\begin{eqnarray*}
\beta_{\varepsilon_{n}}(\Phi_{\varepsilon_{n}}(y_{n}))= y_{n} + \frac{\dis\int_{\mathbb R^{N}}
[\chi(\varepsilon_{n}z +
y_{n})-y_{n}]\Big|\eta(|\varepsilon_{n}z|)w(z)\Big|^{2}}
{\dis \int_{\mathbb R^{N}}\Big|\eta(|\varepsilon_{n}z|)w(z)\Big|^{2}}.
\end{eqnarray*}
Using the fact that $\{y_{n}\}\subset M\subset B_{\rho} $ and the
Lebesgue's Theorem, it follows
\begin{eqnarray*}
| \beta_{\varepsilon_{n}}(\Phi_{\varepsilon_{n}}(y_{n})) - y_{n}| = o_{n}(1),
\end{eqnarray*}
which contradicts (\ref{1eq11}) and the Lemma is proved.
\end{proof}

\vspace*{.4cm}

\begin{lem}\label{gio18}

We have
\begin{eqnarray*}
\lim_{\varepsilon \rightarrow 0^{+}}\ \sup_{u \in
\cal{N}^{m(V_{0})+h(\varepsilon)}_{\varepsilon}} \ \inf_{y \in
M_{\delta}}\Big| \beta_{\varepsilon} (u)- y \Big|  = 0.
\end{eqnarray*}
\end{lem}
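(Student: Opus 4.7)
The plan is to argue by contradiction and reduce the statement to an application of the concentration Proposition \ref{gio17}. Suppose the conclusion fails; then there exist $\delta_{0}>0$, $\varepsilon_{n}\to 0^{+}$ and a sequence $u_{n}\in\mathcal{N}^{m(V_{0})+h(\varepsilon_{n})}_{\varepsilon_{n}}$ such that
\[
\inf_{y\in M_{\delta}}\bigl|\beta_{\varepsilon_{n}}(u_{n})-y\bigr|\ \geq\ \delta_{0}\quad\text{for all } n.
\]
Since $h(\varepsilon_{n})\to 0$, the definition of $\mathcal{N}^{m(V_{0})+h(\varepsilon)}_{\varepsilon}$ in \eqref{subnehari} together with $I_{\varepsilon}(u)\ge m(V_{0})$ on $\mathcal{N}_{\varepsilon}$ forces $I_{\varepsilon_{n}}(u_{n})\to m(V_{0})$, so Proposition \ref{gio17} applies.

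First I would invoke Proposition \ref{gio17} to produce $\{\tilde{y}_{n}\}\subset\mathbb{R}^{N}$ such that $v_{n}(\cdot):=u_{n}(\cdot+\tilde{y}_{n})$ converges strongly in $H^{s}(\mathbb{R}^{N})$ to some $v$, and such that (up to a subsequence) $y_{n}:=\varepsilon_{n}\tilde{y}_{n}\to y\in M$. Then I would make the translation $x\mapsto z+\tilde{y}_{n}$ in the defining integral of $\beta_{\varepsilon_{n}}$ to write
\[
\beta_{\varepsilon_{n}}(u_{n})\ =\ y_{n}\ +\ \frac{\displaystyle\int_{\mathbb{R}^{N}}\bigl[\chi(\varepsilon_{n}z+y_{n})-y_{n}\bigr]\,v_{n}^{2}(z)\,dz}{\displaystyle\int_{\mathbb{R}^{N}}v_{n}^{2}(z)\,dz}.
\]
Since $y\in M\subset M_{2\delta}\subset B_{\rho}$, we have $\chi(y)=y$, and by continuity $\chi(\varepsilon_{n}z+y_{n})-y_{n}\to\chi(y)-y=0$ pointwise a.e.\ in $z$. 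The map $\chi$ is bounded by $\rho$, so the integrand is dominated by $(|\chi|_{\infty}+|y_{n}|)v_{n}^{2}$, which is uniformly integrable thanks to the strong convergence $v_{n}\to v$ in $L^{2}(\mathbb{R}^{N})$. The dominated convergence theorem and the fact that $\int v_{n}^{2}\to\int v^{2}>0$ then give that the fractional term tends to $0$, and therefore
\[
\beta_{\varepsilon_{n}}(u_{n})\ \longrightarrow\ y\ \in\ M\ \subset\ M_{\delta}.
\]
This contradicts $\inf_{y\in M_{\delta}}|\beta_{\varepsilon_{n}}(u_{n})-y|\ge\delta_{0}$ and proves the lemma.

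The main (and really the only) obstacle is the concentration step: one must ensure that the Palais--Smale-type behavior derived from the nearly-minimal energy condition $I_{\varepsilon_{n}}(u_{n})\le m(V_{0})+h(\varepsilon_{n})$ on $\mathcal{N}_{\varepsilon_{n}}$ is strong enough to invoke Proposition \ref{gio17}. This is immediate once one notes that any sequence on $\mathcal{N}_{\varepsilon}$ with $I_{\varepsilon}(u_{n})\to m(V_{0})$ satisfies the hypotheses of that proposition; the remaining change-of-variables and dominated convergence argument is routine, entirely parallel to the proof of Lemma \ref{gio15}.
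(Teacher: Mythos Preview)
Your proof is correct and follows essentially the same route as the paper's: both reduce to showing $I_{\varepsilon_{n}}(u_{n})\to m(V_{0})$, invoke Proposition~\ref{gio17} to obtain the translated convergence $v_{n}\to v$ and $y_{n}=\varepsilon_{n}\tilde y_{n}\to y\in M$, then compute $\beta_{\varepsilon_{n}}(u_{n})$ via the change of variables $x=z+\tilde y_{n}$ and conclude by dominated convergence. The only cosmetic difference is that the paper argues directly (take any $\varepsilon_{n}\to0$, pick $u_{n}$ nearly realizing the sup, and produce $y_{n}\in M_{\delta}$ with $|\beta_{\varepsilon_{n}}(u_{n})-y_{n}|\to0$), whereas you phrase it as a contradiction; the substance is identical.
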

\begin{proof}Let $\{\varepsilon_{n}\}$ be such that
$\varepsilon_{n}\rightarrow 0^{+}$. For each $n \in \mathbb N$, there exists
$u_{n}\in  \cal{N}^{m(V_{0})+h(\varepsilon_{n})}_{{\varepsilon}_{n}}$ such that
$$
\inf_{y \in M_{\delta}}\Big| \beta_{\varepsilon_{n}} (u_{n})- y \Big|
= \sup_{u \in \cal{N}^{m(V_{0})+h(\varepsilon_{n})}_{\varepsilon_{n}}} \inf_{y
\in M_{\delta}}\Big|\beta_{\varepsilon_{n}} (u)- y \Big|  +  o_{n}(1).
$$
Thus, it suffices to find a sequence $\{y_{n}\}\subset M_{\delta}$
such that
\begin{eqnarray}\label{1eq18}
\lim_{n\rightarrow  \infty}\biggl|\beta_{\varepsilon_{n}}(u_{n})-y_{n}\biggl| \ \
= 0.
\end{eqnarray}
Recalling  that $u_{n}\in
\cal{N}^{m(V_{0})+h(\varepsilon_{n})}_{\varepsilon_{n}}\subset{\cal{N}}_{\varepsilon_{n}}$
we have,
\begin{eqnarray*}\label{1eq19}
m({V_{0}}) \leq c_{\varepsilon_{n}}\leq I_{\varepsilon_{n}}(u_{n})\leq
m({V_{0}}) + h(\varepsilon_{n}),
\end{eqnarray*}
 so
$ I_{\varepsilon_{n}}(u_{n})\rightarrow m({V_{0}}).
$
By Proposition \ref{gio17}, we get a sequence $\{\tilde{y}_{n}\}\subset \mathbb R^N$
such that $v_{n}:={u}_{n}(\cdot+\tilde{y}_{n})$ converges in $H^{s}(\mathbb R^{N})$
to some $v$ and  $\{y_{n}\}:=\{\varepsilon_{n}\tilde{y}_{n}\}\subset M_{\delta}$, for $n$ sufficiently large. Thus
$$
\beta_{\varepsilon_{n}}(u_{n})= y_{n}+\frac{\dis\int_{\mathbb R^{N}}[\chi (\varepsilon_{n}z +
y_{n})- y_{n}] v_{n}^{2}(z)}{\dis\int_{\mathbb R^{N}}
v_{n}(z)^{2}}.
$$
Since $v_{n} \to
v$ in $H^{s}(\mathbb R^{N})$, it is easy to check that the sequence
$\{y_{n}\}$ verifies (\ref{1eq18}).
\end{proof}

\medskip

In virtue of  Lemma \ref{gio18},
 there exists $\varepsilon^{*}>0$ such that 
$$\forall\,\varepsilon\in (0,\varepsilon^{*}]: \ \ \sup_{u\in {\mathcal N}^{m(V_{0})+h(\varepsilon)}_{\varepsilon}} d(\beta_{\varepsilon}(u), M_{\delta})<\delta/2.$$

Define now
$$M^{+}:=\Big\{x\in \mathbb R^{N}: d(x,M)\leq 3\delta/2\Big\}$$
so that  $M$ and $M^{+}$ are homotopically equivalent.

Now,  reducing  $\varepsilon^{*}>0$ if necessary,
we can assume that Lemma \ref{gio15},  Lemma \ref{gio18} and \eqref{phiepsilon} hold.
Then by standard arguments the composed map

\begin{equation}\label{fundamental}
M\stackrel{\Phi_{\varepsilon}}{\longrightarrow} {\cal{N}}^{m(V_{0})+h(\varepsilon)}_{\varepsilon}\stackrel{\beta_{\varepsilon}}{\longrightarrow}M^{+}
\quad \text{ is homotopic to the inclusion map.}
\end{equation}

In case $V_{\infty}<\infty$,
we eventually reduce $\varepsilon^{*}$ in such a way that
also the Palais-Smale condition is satisfied in the interval  $(m(V_{0}), m(V_{0})+h(\varepsilon))$,
see Proposition \ref{gio13}.

\section{Proof of Theorem \ref{gioGae1}}\label{TH2}
\subsection{Existence}
By \eqref{fundamental} and well known properties of the category,
we get
\begin{eqnarray*}
\cat({\cal{N}}^{m(V_{0})+h(\varepsilon)}_{\varepsilon})\geq
\cat_{M^{+}}(M),
\end{eqnarray*}
and the Ljusternik-Schnirelman theory  (see e.g.
\cite{ghoussoub}) implies that
$I_{\varepsilon}$ has at least $\cat_{M^{+}}(M)=\cat(M)$ critical points
on ${\cal{N}}_{\varepsilon}$. 

To obtain another solution we use the same ideas of  \cite{BC}.
First note that, since $M$ is not contractible, 
the set $\mathcal A:={\Phi_{\varepsilon}(M)}$
can not be contractible in ${\cal{N}}^{m(V_{0})+h(\varepsilon)}_{\varepsilon}$. 
Moreover $\mathcal A$ is compact. 

For $u\in W_{\varepsilon}\setminus\{0\}$
we denote with $t_{\varepsilon}(u)>0$ the unique positive number such that $t_{\varepsilon}(u) u\in \mathcal N_{\varepsilon}.$
Let $u^{*}\in W_{\varepsilon}$ be such that $u^{*}\geq 0$, and $I_{\varepsilon}(t_{\varepsilon}(u^{*})u^{*})>m(V_{0})+h(\varepsilon).$
Consider the cone
$$\mathfrak C:=\Big\{tu^{*}+(1-t)u: t\in [0,1], u\in\mathcal A \Big\}$$
and note that $0\notin \mathfrak C$, since functions in $\mathfrak C$ have to be positive on a set of nonzero measure.
Clearly it is compact and contractible.
Let 
$$t_{\varepsilon}(\mathfrak C):=\Big\{t_{\varepsilon}(w)w: w\in \mathfrak C\Big\}$$
be its projection on $\mathcal N_{\varepsilon}$, which is compact  as well, and 
$$c:=\max_{t_{\varepsilon}(\mathfrak C)}I_{\varepsilon}>m(V_{0})+h(\varepsilon).$$
Since $\mathcal A\subset t_{\varepsilon}(\mathfrak C)\subset \mathcal N_{\varepsilon}$
and $t_{\varepsilon}(\mathfrak C)$ is contractible in $\mathcal N^{c}_{\varepsilon}:=\{u\in \mathcal N_{\varepsilon}: I_{\varepsilon}(u)\leq c\}$,
we infer that also $\mathcal A$ is contractible in $\mathcal N^{c}_{\varepsilon}$.

Summing up, we have a set $\mathcal A$ which is contractible in $\mathcal N^{c}_{\varepsilon}$
but not  in $\mathcal N^{m(V_{0})+h(\varepsilon)}_{\varepsilon}$, where $c>m(V_{0})+h(\varepsilon).$
This is only possible, since $I_{\varepsilon}$ satisfies the Palais-Smale condition, if there is
a critical level  between $m(V_{0})+h(\varepsilon)$ and $c$.

By Corollary \ref{C1}, we conclude the proof of  statements about the existence of solutions
in Theorem \ref{gioGae1}.

\medskip

\subsection{Concentration of the maximum points.}
The next two lemmas play a role in the study of the behavior of the
maximum points of the solutions. In the proof of the next lemma,
we adapted some arguments found in \cite{Gongbao}, which are
related with the Moser iteration method \cite{Moser}.

\begin{lem}\label{leminha1}
Assume  the conditions \eqref{V} and \eqref{f_{1}}-\eqref{f_{5}}. Let 
 $v_{n}\in H^{s}(\mathbb R^{N})$ be such that 
$$
\left\{
             \begin{array}{l}
              (-\Delta)^{s}v_{n} + V_{n}(x)v_{n}=f(v_{n})
               \ \ \text{ in } \ \ \mathbb R^N, \quad N>2s \\
        v_{n}(x) > 0,  \  x \in \mathbb R^N ,
             \end{array}
           \right.
$$
where  $V_{n}(x):=V(\varepsilon_{n}x+\varepsilon_{n}\tilde{y}_{n})$, and suppose that
 $v_{n}\rightarrow v$ in $H^{s}(\mathbb R^{N})$ with $v\not\equiv
0$. Then $v_{n} \in L^{\infty}(\mathbb R^{N})$ and there exists $C>0$
such that $|v_{n}|_{\infty}\leq C$ for all $n \in
\mathbb N$. Furthermore
$$
\lim_{|x|\rightarrow \infty}v_{n}(x)=0 \,\,\, \mbox{uniformly in
n}.
$$
\end{lem}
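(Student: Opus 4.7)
The plan is to split the argument into the uniform $L^\infty$ bound, obtained via a Moser iteration adapted to the fractional setting, and the uniform decay at infinity, obtained by combining the $L^\infty$ bound with the strong $H^s$ convergence and interior Hölder regularity.

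For the first part, I would fix $\beta\geq 1$, $L>0$ and set $v_{n,L}:=\min\{v_{n},L\}$. Using $\psi_n:=v_n v_{n,L}^{2(\beta-1)}\in H^s(\mathbb R^N)$ as test function in the equation for $v_n$, the key step is the nonlocal algebraic inequality
\[
(v_n(x)-v_n(y))\bigl(\psi_n(x)-\psi_n(y)\bigr)\;\geq\;\frac{1}{\beta^{2}}\,\bigl(w_n(x)-w_n(y)\bigr)^{2},\qquad w_n:=v_n v_{n,L}^{\beta-1},
\]
valid pointwise (standard elementary computation, splitting into the regions where $v_n$ is above or below $L$). Integrating against the kernel $|x-y|^{-(N+2s)}$ and invoking the fractional Sobolev inequality $\|w_n\|_{2_s^\ast}^{2}\leq S\,\|(-\Delta)^{s/2}w_n\|_{2}^{2}$, I get
\[
\|w_n\|_{2_s^\ast}^{2}\;\leq\;C\beta^{2}\!\int_{\mathbb R^N}\bigl(-V_n v_n+f(v_n)\bigr)v_n v_{n,L}^{2(\beta-1)}.
\]
By \eqref{f_{2}}--\eqref{f_{3}}, $f(u)\leq \xi u+C_\xi u^{q+1}$ with $q+1<2_s^\ast$, and the right-hand side is estimated by $\|v_n\|_{H^s}$ and $\|v_n\|_{q+1}$, both bounded uniformly. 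Sending $L\to\infty$ by monotone convergence and iterating with $\beta_{k+1}=\beta_k\, 2_s^\ast/2$ in a Moser-type scheme, standard bootstrapping yields a constant $C>0$ independent of $n$ with $|v_n|_\infty\leq C$.

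For the second part, I would use that $v_n\to v$ strongly in $H^s(\mathbb R^N)\hookrightarrow L^{p}(\mathbb R^N)$ for every $p\in[2,2_s^\ast]$, which forces $\int_{|x|>R}|v_n|^{p}\to 0$ uniformly in $n$ as $R\to\infty$. Thanks to the uniform $L^\infty$ bound and the fact that $V_n$ is bounded, the right-hand side $g_n:=f(v_n)-V_n v_n$ of the equation $(-\Delta)^{s}v_n=g_n$ is uniformly bounded in $L^\infty(\mathbb R^N)$. The fractional regularity theory (Silvestre / Caffarelli--Silvestre) then gives a uniform interior $C^{0,\alpha}$ estimate on unit balls, hence the family $\{v_n\}$ is uniformly equicontinuous. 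Equicontinuity together with uniform $L^p$-smallness on $\{|x|>R\}$ implies pointwise uniform decay: $v_n(x)\to 0$ as $|x|\to\infty$, uniformly in $n$.

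The main obstacle is the first step: unlike the local case, one cannot just multiply by $v_n v_{n,L}^{2(\beta-1)}$ and integrate by parts, so the nonlocal algebraic inequality above and the careful passage to the limit $L\to\infty$ are the delicate points that must be handled with the Gagliardo bilinear form. Once the iteration base estimate is in place, the rest is a routine bootstrap, and the decay at infinity follows from the compactness coming from strong $H^s$ convergence combined with equicontinuity.
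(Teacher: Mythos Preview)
Your proposal is correct and proceeds along a genuinely different route from the paper. The paper carries out a Moser iteration with a cutoff $\eta$ supported in the exterior of a large ball and uses the \emph{critical} growth bound $f(u)\le \xi u+C_\xi u^{2_s^\ast-1}$, absorbing the critical term via the smallness of $\int_{\{|x|>R/2\}}v_n^{2_s^\ast}$ (available from the strong $H^s$ convergence); the output of the iteration is directly the exterior estimate $|v_n|_{L^\infty(\{|x|>R\})}\le C\,|v_n|_{L^{2_s^\ast}(\{|x|>R-r\})}$, so the uniform decay at infinity falls out of the iteration itself rather than from a separate step. You instead run a \emph{global} iteration based on the nonlocal algebraic inequality and the \emph{subcritical} bound, obtain a uniform $L^\infty(\mathbb R^N)$ estimate first, and then deduce the decay separately from Silvestre--type $C^{0,\alpha}$ regularity plus the equicontinuity/$L^p$--smallness argument. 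Your path is more modular and, in fact, technically cleaner for the fractional operator: the paper's cutoff computation is written as if a pointwise Leibniz rule held for $(-\Delta)^{s/2}$ (terms such as $\int \eta\, v_{L,n}^{2(\sigma-1)}v_n\,(-\Delta)^{s/2}v_n\,(-\Delta)^{s/2}\eta$ are inherited verbatim from the local gradient case and have no direct meaning here), whereas the Gagliardo--form inequality you invoke is the correct substitute. The price you pay is the appeal to external H\"older regularity theory, while the paper's argument is, at least formally, self-contained. One small correction: from \eqref{f_{3}} the growth bound is $f(u)\le \xi u+C_\xi u^{q}$ rather than $u^{q+1}$, though this is immaterial since $q<2_s^\ast-1$ is still subcritical; it would also be worth spelling out the H\"older split that makes the recursion $\beta_{k+1}=\beta_k\,2_s^\ast/2$ close with the subcritical exponent.
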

\begin{proof}For any $R>0$, $0<r\leq {R}/{2}$, let $\eta
\in C^{\infty}(\mathbb R^{N})$, $0\leq \eta\leq 1$ with $\eta(x)=1$ if
$|x|\geq R$ and $\eta(x)=0$ if $|x|\leq R-r$ and $|(-\Delta)^{s/2}
\eta|\leq {2}/{r}$. Note that by \eqref{f_{3}} we obtain the
following growth condition for $f$:
\begin{eqnarray}\label{1moser}
f(u)\leq \xi|u|+C_{\xi}|u|^{2^{*}_{s}-1}.
\end{eqnarray}

For each $n \in \mathbb N$ and for $L
> 0$, define
\begin{eqnarray*}
v_{L,n}(x) =
\begin{cases}
v_{n}(x)  &\text{ if } \quad v_{n}(x) \leq L\\
L \quad  &\text{ if }\quad v_{n}(x) \geq L,     
\end{cases}
\end{eqnarray*}
\begin{eqnarray*}
z_{L,n} := \eta^{2}v_{L,n}^{2(\sigma - 1)}v_{n} \quad \text{and} \quad
w_{L,n} := \eta v_{n} v_{L,n}^{\sigma - 1}
\end{eqnarray*}
with $\sigma > 1$ to be determined later.

 Taking $z_{L,n}$ as a test function, we obtain
\begin{eqnarray*}
\int_{\mathbb R^N}\eta^{2}v_{L,n}^{2(\sigma -1)}|(-\Delta)^{s/2} v_{n}|^{2} &=&
-2(\beta -1)\int_{\mathbb R^N}v_{L,n}^{2\sigma -
1}\eta^{2}v_{n}|(-\Delta)^{s/2} v_{n} (-\Delta)^{s/2} v_{L,n}\\
&+& \int_{\mathbb R^N}f(v_{n})\eta^{2}v_{n}v_{L,n}^{2(\sigma - 1)} -
\int_{\mathbb R^N}V_{n}v_{n}^{2}\eta^{2}v_{L,n}^{2(\sigma - 1)}\\
&-&2\int_{\mathbb R^N}\eta v_{L,n}^{2(\sigma -1)}v_{n}(-\Delta)^{s/2} v_{n} (-\Delta)^{s/2} \eta.
\end{eqnarray*}
By (\ref{1moser}) and for a $\xi$ sufficiently small, we have the
following  inequality
\begin{eqnarray*}
\int_{\mathbb R^N}\eta^{2}v_{L,n}^{2(\sigma -1)}|(-\Delta)^{s/2} v_{n}|^{2}
\leq C_{\xi}\int_{\mathbb R^N}v_{n}^{2^{*}_{s}}\eta^{2}v_{L,n}^{2(\sigma -
1)}-2\int_{\mathbb R^N}\eta v_{L,n}^{2(\sigma -1)}v_{n}|(-\Delta)^{s/2}v_{n}(-\Delta)^{s/2} \eta.
\end{eqnarray*}

\noindent For each $\varepsilon >0$, using the Young's inequality we
get
\begin{eqnarray*}
\int_{\mathbb R^N}\eta^{2}v_{L,n}^{2(\sigma -1)}|(-\Delta)^{s/2} v_{n}|^{2}
&\leq& C_{\xi}\int_{\mathbb R^N}v_{n}^{2^{*}_{s}}\eta^{2}v_{L,n}^{2(\sigma
- 1)}+2\varepsilon \int_{\mathbb R^N}\eta^{2}v_{L,n}^{2(\sigma -1)}|(-\Delta)^{s/2}v_{n}|^{2}\\
&+&2C_{\varepsilon}\int_{\mathbb R^N}v_{n}^{2}v_{L,n}^{2(\sigma -
1)}|(-\Delta)^{s/2} \eta|^{2}.
\end{eqnarray*}
Choosing $\varepsilon >0$ sufficiently small,
\begin{eqnarray}\label{2moser}
\int_{\mathbb R^N}\eta^{2}v_{L,n}^{2(\sigma -1)}|(-\Delta)^{s/2} v_{n}|^{2}
\leq C\int_{\mathbb R^N}v_{n}^{2^{*}_{s}}\eta^{2}v_{L,n}^{2(\sigma -
1)}+C\int_{\mathbb R^N}v_{n}^{2}v_{L,n}^{2(\beta - 1)}|(-\Delta)^{s/2}
\eta|^{2}.
\end{eqnarray}

\noindent Now, from Sobolev imbedding and Holder inequalities
\begin{eqnarray}\label{3moser}
|w_{L,n}|^{2}_{2^{*}_{s}} \leq
C\beta^{2}\biggl[\int_{\mathbb R^N}v_{n}^{2}v_{L,n}^{2(\sigma -
1)}|(-\Delta)^{s/2} \eta|^{2} + \int_{\mathbb R^N}\eta^{2} v_{L,n}^{2(\sigma -
1)}|(-\Delta)^{s/2} v_{n}|^{2}\biggl].
\end{eqnarray}

\noindent Using (\ref{2moser}) in (\ref{3moser}), we have
\begin{eqnarray}\label{4moser}
|w_{L,n}|^{2}_{2^{*}} \leq
C\sigma^{2}\biggl[\int_{\mathbb R^N}v_{n}^{2}v_{L,n}^{2(\sigma -
1)}|(-\Delta)^{s/2} \eta|^{2}
+\int_{\mathbb R^N}v_{n}^{2^{*}_{s}}\eta^{2}v_{L,n}^{2(\sigma -
1)}\biggl].
\end{eqnarray}

\noindent We claim that $v_{n} \in L^{{2_{s}^{*^{2}}}/{2}}(\mathbb R^{N}\setminus B_{R})$
for $R$ large enough and uniformly in $n$. In fact, let
$\sigma= {2^{*}_{s}}/{2}$. From (\ref{4moser}), we have
\begin{eqnarray*}
|w_{L,n}|^{2}_{2^{*}} \leq
C\sigma^{2}\biggl[\int_{\mathbb R^N}v_{n}^{2}v_{L,n}^{2^{*}-2}|(-\Delta)^{s/2}
\eta|^{2}
+\int_{\mathbb R^N}v_{n}^{2^{*}_{s}}\eta^{2}v_{L,n}^{2^{*}_{s}-2}\biggl]
\end{eqnarray*}
or equivalently
\begin{eqnarray*}
|w_{L,n}|^{2}_{2^{*}_{s}} \leq
C\sigma^{2}\biggl[\int_{\mathbb R^N}v_{n}^{2}v_{L,n}^{2^{*}_{s}-2}|(-\Delta)^{s/2}
\eta|^{2}
+\int_{\mathbb R^N}v_{n}^{2}\eta^{2}v_{L,n}^{2_{s}^{*}-2}v_{n}^{2_{s}^{*}-2}\biggl].
\end{eqnarray*}

\noindent Using the H\"{o}lder inequality with exponent
${2^{*}_{s}}/{2}$ and ${2^{*}_{s}}/{(2_{s}^{*}-2)}$
\begin{eqnarray*}
|w_{L,n}|^{2}_{2^{*}} \leq
C\sigma^{2}\int_{\mathbb R^N}v_{n}^{2}v_{L,n}^{2^{*}_{s}-2}|(-\Delta)^{s/2}
\eta|^{2}+C\sigma^{2}\biggl(\int_{\mathbb R^N}\biggl[v_{n}\eta
v_{L,n}^{(2^{*}_{s}-2)/2}\biggl]^{2^{*}}\biggl)^{2/2^{*}}
\biggl(\int_{\mathbb R^{N}\setminus B_{R/2}}v_{n}^{2^{*}}\biggl)^{(2^{*}_{s}-2)/2^{*}_{s}}.
\end{eqnarray*}

 From the definition of $w_{L,n}$ we have
\begin{eqnarray*}
\biggl(\int_{\mathbb R^N}\biggl[v_{n}\eta
v_{L,n}^{(2^{*}_{s}-2)/2}\biggl]^{2^{*}_{s}}\biggl)^{2/2^{*}_{s}}&\leq&
C\sigma^{2}\int_{\mathbb R^N}v_{n}^{2}v_{L,n}^{2^{*}_{s}-2}|(-\Delta)^{s/2}
\eta|^{2}\\
&+&C\sigma^{2}\biggl(\int_{\mathbb R^N}\biggl[v_{n}\eta
v_{L,n}^{(2^{*}_{s}-2)/2}\biggl]^{2^{*}_{s}}\biggl)^{2/2^{*}_{s}}
\biggl(\int_{\mathbb R^{N}\setminus B_{R/2}}v_{n}^{2^{*}_{s}}\biggl)^{2^{*}_{s}-2/2_{s}^{*}}.
\end{eqnarray*}

Since $v_{n}\rightarrow v$ in $H^{s}(\mathbb R^{N})$, for $R$
sufficiently large, we conclude
\begin{eqnarray*}
\int_{\mathbb R^{N}\setminus B_{R/2}}v_{n}^{2^{*}_{s}} \leq \varepsilon \,\,\,
\mbox{uniformly in $n$.}
\end{eqnarray*}
Hence
\begin{eqnarray*}
\biggl(\int_{\mathbb R^{N}\setminus B_{R}}\biggl[v_{n}
v_{L,n}^{(2^{*}_{s}-2)/2}\biggl]^{2^{*}_{s}}\biggl)^{2/2^{*}_{s}}\leq
C\sigma^{2}\int_{\mathbb R^N}v_{n}^{2}v_{L,n}^{2^{*}_{s}-2}
\end{eqnarray*}
or equivalently
\begin{eqnarray*}
\biggl(\int_{\mathbb R^{N}\setminus B_{R}}\biggl[v_{n}
v_{L,n}^{(2^{*}_{s}-2)/2}\biggl]^{2^{*}}\biggl)^{2/2_{s}^{*}}\leq
C\sigma^{2}\int_{\mathbb R^N}v_{n}^{2^{*}_{s}}\leq K < \infty .
\end{eqnarray*}
 Using the Fatou's lemma in the variable $L$, we have
\begin{eqnarray*}
\int_{\mathbb R^{N}\setminus B_{R}}v_{n}^{2_{s}^{*^{2}}/2} < \infty
\end{eqnarray*}
and therefore the claim holds.

Next, we note that if $\sigma=2^{*}_{s}{(t-1)}/{2t}$ with
$t={2_{s}^{*^{2}}}/{2(2^{*}_{s}-2)}$, then $\sigma >1$,
${2t}/{(t-1)}<2^{*}_{s}$ and $v_{n} \in L^{\sigma 2t/t-1}(\mathbb R^{N}\setminus B_{R-r})$.

Returning to inequality (\ref{4moser}), we obtain
\begin{eqnarray*}
|w_{L,n}|^{2}_{2^{*}_{s}} \leq C\sigma^{2}\biggl[\int_{ B_{R}\setminus B_{R-r}}v_{n}^{2}v_{L,n}^{2(\sigma - 1)} 
+\int_{\mathbb R^{N}\setminus B_{R-r}}v_{n}^{2_{s}^{*}}v_{L,n}^{2(\sigma - 1)}\biggl]
\end{eqnarray*}
or equivalently
\begin{eqnarray*}
|w_{L,n}|^{2}_{2^{*}_{s}} \leq C\sigma^{2}\biggl[\int_{B_{R}\setminus B_{R-r}}v_{n}^{2\sigma}
+\int_{\mathbb R^{N}\setminus B_{R-r}}v_{n}^{2^{*}_{s}-2}v_{n}^{2\sigma }\biggl].
\end{eqnarray*}

\noindent Using the H\"{o}lder's inequality with exponent
$t/(t-1)$ and $t$, we get
\begin{eqnarray*}
|w_{L,n}|^{2}_{2^{*}_{s}} &\leq&
C\sigma^{2}\biggl\{\biggl[\int_{B_{R}\setminus B_{R-r}}v_{n}^{2\sigma t/(t-1)}\biggl]^{(t-1)/t}\biggl[\int_{B_{R}\setminus B_{R-r}}1\biggl]^{1/t}\\
&+&\biggl[\int_{\mathbb R^{N}\setminus B_{R-r}}v_{n}^{(2^{*}_{s}-2)t}\biggl]^{1/t}\biggl[\int_{\mathbb R^{N}\setminus B_{R-r}}v_{n}^{2\sigma t/(t-1)}\biggl]^{t/(t-1)}\biggl\}.
\end{eqnarray*}

\noindent Since that $(2^{*}_{s}-2)t=2_{s}^{*^{2}}$, we conclude
\begin{eqnarray*}
|w_{L,n}|^{2}_{2^{*}_{s}} \leq C\sigma^{2}\biggl(\int_{\mathbb R^{N}\setminus B_{R-r}}v_{n}^{2\sigma t/(t-1)}\biggl)^{(t-1)/t}.
\end{eqnarray*}

\noindent Note that
\begin{eqnarray*}
|v_{L,n}|^{2\sigma}_{L^{2^{*}_{s}\sigma}(\mathbb R^{N}\setminus B_{R})} &\leq&
\biggl(\int_{\mathbb R^{N}\setminus B_{R-r}}v_{L,n}^{2^{*}_{s}\sigma
}\biggl)^{2/2_{s}^*}\leq \biggl(\int_{\mathbb R^{N}}\eta^{2}
v_{n}^{2^{*}_{s}}v_{L,n}^{2^{*}(\sigma -1) }\biggl)^{2/2_{s}^*}\\
&=&|w_{L,n}|^{2}_{2^{*}_{s}} \leq C\sigma^{2}\biggl
(\int_{\mathbb R^{N}\setminus B_{R-r}}v_{n}^{2\sigma t/(t-1)}\biggl)^{(t-1)/t}\\
&=& C\sigma^{2}|v_{n}|^{2\sigma}_{L^{2\sigma t /(t-1)}(\mathbb R^{N}\setminus B_{R-r})}.
\end{eqnarray*}

\noindent Applying Fatou's lemma
\begin{eqnarray*}
|v_{n}|^{2\sigma}_{L^{2^{*}_{s}\sigma}(\mathbb R^{N}\setminus B_{R})} \leq
C\sigma^{2}|v_{n}|^{2\sigma}_{L^{2\sigma t /(t-1)}(\mathbb R^{N}\setminus B_{R-r})}.
\end{eqnarray*}

\noindent Considering $\chi = {2^{*}_{s}(t-1)}/{2t}$,
$\zeta=2t/(t-1)$ and the last inequality, we can prove that
\begin{eqnarray*}
|v_{n}|_{L^{\chi^{m+1} \zeta}(\mathbb R^{N}\setminus B_{R})} \leq
C^{\sum_{i=1}^{m}\chi^{-i}}\chi^{\sum_{i=1}^{m}i\chi^{-i}}
|v_{n}|_{L^{2^{*}_{s}}(\mathbb R^{N}\setminus B_{R-r})},
\end{eqnarray*}
which implies
\begin{eqnarray*}
|v_{n}|_{L^{\infty}(\mathbb R^{N}\setminus B_{R})}\leq C|v_{n}|_{L^{2^{*}_{s}}(\mathbb R^{N}\setminus B_{R-r})}.
\end{eqnarray*}
Using again the convergence of $\{v_{n}\}$ to $v$ in
$H^{s}(I\!\!R^{N})$, for $\xi>0$ fixed there exists $R>0$
such that
\begin{eqnarray*}
|v_{n}|_{L^{\infty}(\mathbb R^{N}\setminus B_{R})}< \xi \quad \text{ for all } n \in \mathbb N .
\end{eqnarray*}

\noindent Thus,
$$
\lim_{|x|\rightarrow \infty}v_{n}(x)=0 \,\,\, \mbox{uniformly in }
n
$$
and the proof of the Lemma is finished. 
\end{proof}

Finally we have

\begin{lem}\label{leminha2}
There exists $\delta>0$ such that $|v_{n}|_{\infty} \geq \delta,
$ for every $ n \in \mathbb N. $

\end{lem}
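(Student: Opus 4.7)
I would argue by contradiction. Assume, up to a subsequence, that $|v_n|_\infty \to 0$ as $n \to \infty$. The goal is to derive $\int_{\mathbb R^N} v_n^2 \to 0$, which would contradict the fact that $v_n \to v$ in $H^s(\mathbb R^N)$ with $v \not\equiv 0$ (so $\int v_n^2 \to \int v^2 > 0$).

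The key input is the growth control on $f$. From \eqref{f_{2}} and \eqref{f_{3}}, for every $\xi > 0$ there is $C_\xi > 0$ such that
\[
f(u)u \leq \xi\, u^2 + C_\xi\, u^{q+1} \quad \text{for all } u \geq 0,
\]
where $q+1 \in (3, 2^*_s)$. Using $v_n \in \mathcal N_{\varepsilon_n}$ (equivalently, testing the equation against $v_n$ itself), together with $V_n(x) = V(\varepsilon_n x + \varepsilon_n \tilde y_n) \geq V_0 > 0$, we get
\[
V_0 \int_{\mathbb R^N} v_n^2 \;\leq\; \int_{\mathbb R^N}\!\bigl(|(-\Delta)^{s/2}v_n|^2 + V_n v_n^2\bigr) \;=\; \int_{\mathbb R^N} f(v_n) v_n \;\leq\; \xi \int_{\mathbb R^N} v_n^2 + C_\xi \int_{\mathbb R^N} v_n^{q+1}.
\]

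Next I would estimate the superlinear term using the assumed decay of $|v_n|_\infty$:
\[
\int_{\mathbb R^N} v_n^{q+1} \;\leq\; |v_n|_\infty^{q-1} \int_{\mathbb R^N} v_n^2.
\]
Plugging this back yields
\[
\Bigl( V_0 - \xi - C_\xi\, |v_n|_\infty^{q-1}\Bigr) \int_{\mathbb R^N} v_n^2 \;\leq\; 0.
\]
Choosing $\xi = V_0/4$ (thus fixing $C_\xi$) and then taking $n$ so large that $C_\xi |v_n|_\infty^{q-1} < V_0/4$, the coefficient on the left is bounded below by $V_0/2 > 0$, forcing $\int v_n^2 = 0$. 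This contradicts $\int v_n^2 \to \int v^2 > 0$.

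The argument is essentially one-shot; the only subtle point is ensuring the growth bound on $f$ is sharp enough to absorb both the linear part (via $V_0$) and the superlinear part (via the vanishing $|v_n|_\infty$). Since \eqref{f_{2}} and \eqref{f_{3}} together produce exactly the required estimate $f(u)u \leq \xi u^2 + C_\xi u^{q+1}$, no further ingredient is needed. The fact that $v_n \to v$ strongly in $L^2(\mathbb R^N)$, which is guaranteed by the hypotheses via Lemma \ref{leminha1}, is what makes $\int v_n^2$ stay bounded below and delivers the contradiction.
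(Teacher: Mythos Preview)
Your argument is correct and reaches the same contradiction as the paper, but the route is slightly different. The paper exploits \eqref{f_{5}} (monotonicity of $u\mapsto f(u)/u$) together with \eqref{f_{2}}: since $0<v_n(x)\le |v_n|_\infty$ and $f(|v_n|_\infty)/|v_n|_\infty\to 0$, one has directly $f(v_n)v_n\le \frac{V_0}{2}v_n^2$ for large $n$, which after testing the equation forces $\|v_n\|_{V_0}=0$. You instead use the polynomial growth bound from \eqref{f_{2}}--\eqref{f_{3}} and then control the superlinear term via $\int v_n^{q+1}\le |v_n|_\infty^{q-1}\int v_n^2$. Both approaches yield $\int v_n^2=0$ for large $n$; the paper's is a one-line application of \eqref{f_{5}}, while yours avoids \eqref{f_{5}} at the cost of invoking \eqref{f_{3}}. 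One minor remark: the strong convergence $v_n\to v\not\equiv 0$ in $H^s(\mathbb R^N)$ is part of the standing hypotheses coming from Proposition~\ref{gio17}, not a consequence of Lemma~\ref{leminha1}; and in fact the simpler contradiction is just that $v_n\neq 0$ for every $n$, so you need not pass through the limit $\int v_n^2\to\int v^2>0$.
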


\begin{proof}
Suppose that $|v_{n}|_{\infty} \to
0$.
It follows by
\eqref{f_{5}} that there exists $n_{0} \in \mathbb N$ such that,
$$
\frac{f(|v_{n}|_{\infty})}{|v_{n}|_{\infty}}<\frac{V_{0}}{2},
\,\,\, \mbox{for} \,\,\, n \geq n_{0}.
$$
Hence
$$
\int_{\mathbb R^{N}}|(-\Delta)^{s/2}
v_{n}|^{2}+\int_{\mathbb R^{N}}V_{0}v_{n}^{2}\leq
\int_{\mathbb R^{N}}\frac{f(|v_{n}|_{\infty})}{|v_{n}|_{\infty}}v_{n}^{2}
\leq \frac{V_{0}}{2}\int_{\mathbb R^N}v_{n}^{2},
$$
thus $\|v_{n}\|_{V_{0}} = 0$ for $n \geq n_{0}$,
which is an absurd, because $v_{n}\not=0$ for every  $n \in \mathbb N$. 
\end{proof}

\medskip

For what concerns the behavior of the maximum points when $\varepsilon\to0^{+},$
let $u_{\varepsilon_{n}}$ be a solution of problem $(P_{\varepsilon_{n}})$. Then
$v_{n}(x)=u_{\varepsilon_{n}}(x+\tilde{y}_{n})\in H^{s}(\mathbb R^{N})$ is a solution of
$$
  \left\{
             \begin{array}{l}
              (-\Delta)^{s} v_{n} + V_{n}(x)v_{n}=f(v_{n})
               \ \ in \ \ \mathbb R^N
        \\
        v_{n}(x) > 0,  \,x \in \mathbb R^N ,
             \end{array}
           \right.
$$
with $V_{n}(x):=V(\varepsilon_{n}x+\varepsilon_{n}\tilde{y}_{n})$ and
$\{\tilde{y}_{n}\}\subset \mathbb R^{N}$ are those given in Proposition \ref{gio17}.
Moreover, up to a subsequence, $v_{n}\rightarrow v$ in
$H^{s}(\mathbb R^{N})$ and $y_{n}\rightarrow y$ in $M$, where
$y_{n}=\varepsilon_{n}\tilde{y}_{n}$. 
By Lemma \ref{leminha1} and Lemma \ref{leminha2},  the global
maxima $p_{n}$ of $v_{n}$ are all in 
$ B_{R}$ for some $R>0$. Thus, the global
maximum of $u_{\varepsilon_{n}}$ is
$z_{\varepsilon}=p_{n}+\tilde{y}_{n}$ and therefore
$$
\varepsilon_{n}z_{\varepsilon_{n}}=\varepsilon_{n}p_{n}+\varepsilon_{n}\tilde{y}_{n}=\varepsilon_{n}p_{n}+y_{n}.
$$
Since $\{p_{n}\}$ is bounded, we have
\begin{equation*}\label{fine}
\lim_{n\rightarrow
\infty}V(\varepsilon_{n}z_{\varepsilon_{n}})=V_{0}.
\end{equation*}
We conclude the proof of Theorem \ref{gioGae1} 
in virtue of the considerations made at the beginning of Section \ref{preliminari}.

\section{Proof of Theorem \ref{gioGae2}}\label{TH3}
Before prove the theorem we first recall some basic facts of Morse theory
and fix some notations.

For a pair of topological spaces $(X,Y)$,
$Y\subset X,$ let $H_{*}(X,Y)$ be its singular homology with coefficients in some field $\mathbb F$
(from now on omitted) and 
$$\mathcal P_{t}(X,Y)=\sum_{k}\dim H_{k}(X,Y)t^{k}$$
the Poincar\'e polynomial of the pair. If $Y=\emptyset$, it will be always omitted in the objects which involve the pair.
Recall that  if $H$ is an Hilbert space,  $I:H\to \mathbb R$  a $C^{2}$ functional  and 
$u$ an isolated critical point  with $I(u)=c$, the  {\sl polynomial Morse index} of $u$ is
$$\mathcal I_{t}(u)=\sum_{k}\dim C_{k}(I,u)  t^{k}$$
where $C_{k}(I,u)=H_{k}(I^{c}\cap U, (I^{c}\setminus\{u\})\cap U)$ are the critical groups.
Here $I^{c}=\{u\in H: I(u)\leq c\}$ and $U$ is a neighborhood of the critical point $u$.
The multiplicity of $u$ is the number $\mathcal I_{1}(u)$.

It is known that  for a non-degenerate critical point $u$
(that is, the selfadjoint operator associated to $I''(u)$
is an isomorphism)
it is $\mathcal I_{t}(u)=t^{\mathfrak m (u)}$,
where $\mathfrak m(u)$ is the {\sl (numerical) Morse index of $u$}: the maximal dimension
of the subspaces where $I''(u)[\cdot,\cdot]$ is negative definite.

\subsection{Proof of Theorem \ref{gioGae2}}

First note that $I_{\varepsilon}$ is of class $C^{2}$ and
for $u,v,w\in W_{\varepsilon}$
\begin{equation*}\label{I''}
I_{\varepsilon}''(u)[v,w]=\int_{\mathbb R^{N}}(-\Delta)^{s/2}v(-\Delta)^{s/2}w+\int_{\mathbb R^{N}}V(\varepsilon x)vw-\int_{\mathbb R^{N}}f'(u)vw
\end{equation*}
hence $I_{\varepsilon}''(u)$ is represented by the operator
\begin{equation*}
L_{\varepsilon}(u):=\textrm R(u)-\mathrm K(u): W_{\varepsilon}\to W_{\varepsilon}'
\end{equation*}
where $\mathrm R(u)$ is the Riesz isomorphism
and $\mathrm K(u)$ is compact. Indeed let $v_{n}\rightharpoonup 0$ and $w\in W_{\varepsilon}$;
given $\xi > 0$, by \eqref{f_{2}} and \eqref{f_{3}}, for some constant $C_{\xi}>0$
we have
$$\int_{\mathbb R^{N}}\Big|f'(u)v_{n}w\Big|\leq \xi \int_{\mathbb R^{N}} |v_{n}w| +C_{\xi}\int_{\mathbb R^{N}}|u|^{q-1}|v_{n}w|$$
and using that  $v_{n}\rightharpoonup 0$ and the fact that $\xi$ is arbitrary, we  deduce
$$\|\mathrm K(u)[v_{n}]\|=\sup_{\|w\|_{W_{\varepsilon}}=1}\Big|\int_{\mathbb R^{N}}f'(u)v_{n}w\Big|\rightarrow 0.$$

Now  for $a\in(0,+\infty]$, let
$$I_{\varepsilon}^{a}:=\Big\{u\in W_{\varepsilon}: I_{\varepsilon}(u)\leq a\Big\}\ , \qquad 
\mathcal N_{\varepsilon}^{a}:= \mathcal N_{\varepsilon}\cap  I_{\varepsilon}^{a}$$\smallskip
$$\mathcal K_{\varepsilon}:=\Big\{u\in W_{\varepsilon}: I'_{\varepsilon}(u)=0\Big\}\ ,\qquad
\mathcal K_{\varepsilon}^{a}:= \mathcal K_{\varepsilon}\cap  I_{\varepsilon}^{a}\ ,\qquad
(\mathcal K_{\varepsilon})_{a}:=\Big\{u\in \mathcal K_{\varepsilon}: I_{\varepsilon}(u)> a\Big\} .
$$
In the remaining part of this section we will follow \cite{BC,claudianorserginhorodrigo}.
Let $\varepsilon^{*}>0$ small as at the end of Section \ref{Bary} 
and let $\varepsilon\in (0,\varepsilon^{*}]$ be fixed.
In particular $I_{\varepsilon}$ satisfies the Palais-Smale condition.
 We are going to prove that $I_{\varepsilon}$ restricted to $\mathcal N_{\varepsilon}$
 has at least $2\mathcal P_{1}(M)-1$ critical points (for small $\varepsilon$).
 Then Theorem \ref{gioGae2} will follow by Corollary \ref{C1}.
 \medskip
 
 We can assume,
 of course,  that there exists a regular value $b^{*}_{\varepsilon}>m(V_{0})$
 for the functional $I_{\varepsilon}$. Moreover, possibly reducing $\varepsilon^{*}$, we can assume that, see \eqref{subnehari},
 $$\Phi_{\varepsilon}: M\to  \mathcal N^{m(V_{0})+h(\varepsilon)}_{\varepsilon}\subset \mathcal N_{\varepsilon}^{b_{\varepsilon}^{*}}.$$

Since $\Phi_{\varepsilon}$ is injective, it induces
injective homomorphisms in the homology groups, then $\dim H_{k}(M)
\leq \dim H_{k}(\mathcal N_{\varepsilon}^{b_{\varepsilon}^{*}})$
and consequently
\begin{equation}\label{obvious1}
\mathcal P_{t}(\mathcal N_{\varepsilon}^{b_{\varepsilon}^{*}})=\mathcal P_{t}(M)+\mathcal Q(t), \qquad \mathcal Q\in \mathbb P,
\end{equation}
where hereafter $\mathbb P$ denotes the set of polynomials with non-negative integer coefficients.
 
 \medskip

The following result is analogous to \cite[Lemma 5.2]{BC}; we omit the proof.
\begin{lem}
Let  $r\in (0,m(V_{0}))$  and $a\in(r,+\infty]$ a regular level for $I_{\varepsilon}$.
Then
 \begin{eqnarray}\label{PtP}
\mathcal P_{t}(I_{\varepsilon}^{a},I_{\varepsilon}^{r})&=&t \mathcal P_{t}(\mathcal N_{\varepsilon}^{a}).
\end{eqnarray}
\end{lem}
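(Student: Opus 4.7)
The formula $\mathcal P_t(I_\varepsilon^a,I_\varepsilon^r)=t\,\mathcal P_t(\mathcal N_\varepsilon^a)$ is the Poincar\'e-polynomial translation of the shifted isomorphism $H_n(I_\varepsilon^a,I_\varepsilon^r)\cong H_{n-1}(\mathcal N_\varepsilon^a)$ in every degree $n$, which is precisely the K\"unneth identity for a pair of product type $(X\times[-1,1],\,X\times\{-1,1\})$. The plan is therefore to realize $(I_\varepsilon^a,I_\varepsilon^r)$, up to homotopy of pairs, as such a product with $X=\mathcal N_\varepsilon^a$, and then conclude by K\"unneth.

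The geometric input is the following. Since $0<r<m(V_{0})\le c_\varepsilon=\inf_{\mathcal N_\varepsilon}I_\varepsilon$, one has $\mathcal N_\varepsilon\cap I_\varepsilon^r=\emptyset$, and for every $v\in\mathcal N_\varepsilon^a$ the scalar function $\phi_v(s):=I_\varepsilon(sv)$ vanishes at $s=0$, attains its unique positive maximum $\phi_v(1)=I_\varepsilon(v)\le a$ at $s=1$, and tends to $-\infty$ as $s\to+\infty$. Consequently the whole ray $\{sv:s\ge 0\}$ lies in $I_\varepsilon^a$ and meets $\{I_\varepsilon=r\}$ transversally in exactly two points $s_-(v)\,v$ and $s_+(v)\,v$ with $0<s_-(v)<1<s_+(v)<\infty$, both depending continuously on $v$ by the implicit function theorem applied to $\phi_v$ at its two crossings. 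I would then define
$$
\Psi\colon \mathcal N_\varepsilon^a\times[-1,1]\longrightarrow I_\varepsilon^a,\qquad \Psi(v,\tau)=\lambda_\tau(v)\,v,
$$
where $\lambda_\tau(v)$ is a continuous monotone interpolation with $\lambda_{-1}(v)=s_-(v)$, $\lambda_0(v)=1$, $\lambda_1(v)=s_+(v)$. Then $\Psi$ is a topological embedding (injectivity follows from the radial diffeomorphism $\mathcal N_\varepsilon\simeq\mathcal S_\varepsilon$) sending $\mathcal N_\varepsilon^a\times\{-1,1\}$ into $I_\varepsilon^r$.

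The heart of the proof is to upgrade $\Psi$ to a homotopy equivalence of pairs onto $(I_\varepsilon^a,I_\varepsilon^r)$. For this I would construct a strong deformation retraction of $I_\varepsilon^a$ onto $\mathrm{Im}\,\Psi\cup I_\varepsilon^r$ by combining the radial Nehari projection with the negative pseudo-gradient flow of $I_\varepsilon$: points $u\in I_\varepsilon^a$ whose Nehari projection $t_\varepsilon(u)\,u$ already lies in $\mathcal N_\varepsilon^a$ belong to $\mathrm{Im}\,\Psi$ along their ray by construction, while the ``Case B'' points (those whose Nehari projection overshoots the level $a$) can be flowed into $I_\varepsilon^r$ in finite time thanks to the regular-value hypothesis on $a$ and to the Palais--Smale condition in the strip $(r,a]$ ensured by Proposition \ref{gio11} (for $\varepsilon\le\varepsilon^*$ small enough, cf.\ the end of Section \ref{Bary}). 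Once this homotopy equivalence of pairs is established, the K\"unneth formula over the field $\mathbb F$ yields
$$
H_n\bigl(\mathcal N_\varepsilon^a\times[-1,1],\,\mathcal N_\varepsilon^a\times\{-1,1\}\bigr)\cong H_{n-1}(\mathcal N_\varepsilon^a)\otimes H_1([-1,1],\{-1,1\})\cong H_{n-1}(\mathcal N_\varepsilon^a),
$$
and summing $\dim H_n$ against $t^n$ produces exactly $\mathcal P_t(I_\varepsilon^a,I_\varepsilon^r)=t\,\mathcal P_t(\mathcal N_\varepsilon^a)$. The main obstacle is precisely the construction of the deformation retraction: one must verify that all Case-B trajectories reach $I_\varepsilon^r$ continuously in the initial data, and that no flow orbit is trapped above $r$ outside $\mathrm{Im}\,\Psi$; this is exactly where the regular-value property of $a$, the Palais--Smale compactness from Proposition \ref{gio11}, and the absence of critical points of $I_\varepsilon$ in the strip $(r,a]$ outside $\mathrm{Im}\,\Psi$ enter the argument.
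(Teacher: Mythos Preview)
The paper gives no proof of its own here---it simply cites \cite[Lemma~5.2]{BC} and omits the argument---and your outline is precisely the Benci--Cerami strategy behind that citation: exploit the radial structure to embed $\mathcal N_\varepsilon^a\times([-1,1],\{-1,1\})$ into $(I_\varepsilon^a,I_\varepsilon^r)$, argue that this embedding is a homotopy equivalence of pairs, and read off the degree shift from K\"unneth.

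Two small points are worth tightening. First, your case analysis omits the cone $\{u\in I_\varepsilon^a:u^+\equiv 0\}$, where no Nehari projection exists; there $I_\varepsilon(u)=\tfrac12\|u\|_{W_\varepsilon}^2$ and a radial contraction sends these points into $I_\varepsilon^r$, but the gluing near the boundary $\{u^+=0\}$ deserves a word. Second, the lemma as stated carries no Palais--Smale hypothesis on the strip $(r,a]$, so your appeal to the pseudo-gradient flow for the ``Case~B'' points is not available at this level of generality. The cleaner route (and the one closer to \cite{BC}) is purely radial: on $I_\varepsilon^a\setminus\mathcal N_\varepsilon^a$ the sign of $J_\varepsilon(u)=I_\varepsilon'(u)[u]$ is well defined and continuous, and sliding each point along its own ray in the direction that decreases $I_\varepsilon$ until the level $r$ is reached gives a deformation retraction of $I_\varepsilon^a\setminus\mathcal N_\varepsilon^a$ onto $I_\varepsilon^r$. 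This yields $(I_\varepsilon^a,I_\varepsilon^r)\simeq(I_\varepsilon^a,\,I_\varepsilon^a\setminus\mathcal N_\varepsilon^a)$, after which excision down to a radial tube around $\mathcal N_\varepsilon^a$ (your $\mathrm{Im}\,\Psi$) and K\"unneth finish the computation without any compactness input.
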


\medskip

In particular we have the following
\begin{coro}\label{t}
Let $r\in (0,m(V_{0}))$. Then
\begin{eqnarray*}
\mathcal P_{t}(I_{\varepsilon}^{b_{\varepsilon}^{*}},I_{\varepsilon}^{r})&=&t\Big(\mathcal P_{t}(M)+\mathcal Q(t)\Big), \qquad \mathcal Q\in \mathbb P, \label{prima}\\
\mathcal P_{t}(W_{\varepsilon}, I_{\varepsilon}^{r})&=& 
t.\label{seconda}
\end{eqnarray*}
\end{coro}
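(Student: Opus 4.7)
The plan is to deduce both identities as direct corollaries of the preceding lemma, each time choosing a convenient regular value $a>r$ and then reading off the relevant homology from structural information we already have about the Nehari manifold.

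For the first identity, I would apply the lemma with $a=b_{\varepsilon}^{*}$, which is regular by construction. This immediately gives
\[
\mathcal P_{t}(I_{\varepsilon}^{b_{\varepsilon}^{*}},I_{\varepsilon}^{r})=t\,\mathcal P_{t}(\mathcal N_{\varepsilon}^{b_{\varepsilon}^{*}}).
\]
Plugging in the decomposition \eqref{obvious1}, namely $\mathcal P_{t}(\mathcal N_{\varepsilon}^{b_{\varepsilon}^{*}})=\mathcal P_{t}(M)+\mathcal Q(t)$ with $\mathcal Q\in\mathbb P$, yields exactly the first formula. This step is purely formal.

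For the second identity, I would take $a=+\infty$ in the lemma. Since $I_{\varepsilon}$ satisfies the Palais--Smale condition on $W_{\varepsilon}$ (Proposition \ref{gio11}) and $+\infty$ is trivially a regular value, the same deformation argument underlying the previous lemma applies, with $I_{\varepsilon}^{\infty}=W_{\varepsilon}$ and $\mathcal N_{\varepsilon}^{\infty}=\mathcal N_{\varepsilon}$; thus
\[
\mathcal P_{t}(W_{\varepsilon},I_{\varepsilon}^{r})=t\,\mathcal P_{t}(\mathcal N_{\varepsilon}).
\]
The problem is therefore reduced to showing $\mathcal P_{t}(\mathcal N_{\varepsilon})=1$, i.e.\ that $\mathcal N_{\varepsilon}$ has the homology of a point.

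For this last step, I would exploit the fact (recorded before Lemma \ref{geometriadamontanha}) that $\mathcal N_{\varepsilon}$ is radially diffeomorphic to the ``positive unit sphere''
\[
\mathcal S_{\varepsilon}=\Big\{u\in W_{\varepsilon}:\|u\|_{W_{\varepsilon}}=1,\ u>0\text{ a.e.}\Big\}.
\]
The set $\mathcal S_{\varepsilon}$ sits inside an infinite-dimensional Hilbert sphere, which is contractible, and moreover $\mathcal S_{\varepsilon}$ is star-shaped with respect to any fixed positive reference function $u_{0}\in\mathcal S_{\varepsilon}$: the straight-line homotopy $H(u,\tau)=((1-\tau)u+\tau u_{0})/\|(1-\tau)u+\tau u_{0}\|_{W_{\varepsilon}}$ is well defined (the convex combination is still positive a.e.\ and non-zero) and continuously deforms $\mathcal S_{\varepsilon}$ to $\{u_{0}\}$. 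Hence $\mathcal S_{\varepsilon}$, and therefore $\mathcal N_{\varepsilon}$, is contractible, so $\mathcal P_{t}(\mathcal N_{\varepsilon})=1$ and the second formula follows.

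I expect the only non-routine point to be the explicit contractibility argument for $\mathcal S_{\varepsilon}$; everything else is a bookkeeping application of the preceding lemma and \eqref{obvious1}.
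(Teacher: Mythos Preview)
Your proof is correct and follows exactly the paper's approach: apply the preceding lemma with $a=b_{\varepsilon}^{*}$ and combine with \eqref{obvious1} for the first identity, then take $a=+\infty$ and use that $\mathcal N_{\varepsilon}$ is contractible for the second. The paper simply asserts the contractibility of $\mathcal N_{\varepsilon}$ without argument, whereas you supply the explicit convex-combination homotopy on $\mathcal S_{\varepsilon}$; this added detail is fine and does not constitute a different route.
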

\begin{proof}
The first identity follows by    \eqref{obvious1} and \eqref{PtP} by choosing  $a=b^{*}_{\varepsilon}$.
The second one follows by \eqref{PtP} with $a=+\infty$ and
noticing that the Nehari manifold $\mathcal N_{\varepsilon}$ is contractible. 
\end{proof}

To deal with critical points above the level $b^{*}_{\varepsilon},$ we need also the following
\begin{lem}\label{quadrato}
It holds
$$\mathcal P_{t}(W_{\varepsilon},I_{\varepsilon}^{b_{\varepsilon}^{*}})=t^{2}\Big(\mathcal P_{t}(M)+\mathcal Q(t)-1\Big), \qquad \mathcal Q\in \mathbb P.$$
\end{lem}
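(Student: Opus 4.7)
The plan is to apply the long exact sequence of the triple $(W_\varepsilon, I_\varepsilon^{b_\varepsilon^*}, I_\varepsilon^r)$, feeding in the information already gathered in Corollary \ref{t}. Two global facts will be used: $W_\varepsilon$ is a Hilbert space and the Nehari manifold $\mathcal N_\varepsilon$ is radially diffeomorphic to its (infinite-dimensional) unit sphere, so both are contractible, and in particular $\mathcal P_t(\mathcal N_\varepsilon)=1$. A slightly less evident ingredient is that the identity $\mathcal P_t(I_\varepsilon^a,I_\varepsilon^r)=t\,\mathcal P_t(\mathcal N_\varepsilon^a)$ of the preceding lemma is actually realized by a natural (in $a$) isomorphism $H_k(I_\varepsilon^a,I_\varepsilon^r)\cong H_{k-1}(\mathcal N_\varepsilon^a)$, since its proof is built on the geometric radial rescaling $u\mapsto t_\varepsilon(u)u$, which does not depend on $a$.

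First I would use this naturality with $a=b_\varepsilon^*$ and with $a=+\infty$ to identify the inclusion-induced map $j_k\colon H_k(I_\varepsilon^{b_\varepsilon^*},I_\varepsilon^r)\to H_k(W_\varepsilon,I_\varepsilon^r)$ with the inclusion-induced map $H_{k-1}(\mathcal N_\varepsilon^{b_\varepsilon^*})\to H_{k-1}(\mathcal N_\varepsilon)$. Since $\mathcal N_\varepsilon$ is contractible and $\mathcal N_\varepsilon^{b_\varepsilon^*}$ is non-empty (it contains $\Phi_\varepsilon(M)$), $j_1$ is the augmentation onto $\mathbb F$, hence surjective, and $j_k=0$ for $k\geq 2$. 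Writing $A_k=H_k(I_\varepsilon^{b_\varepsilon^*},I_\varepsilon^r)$, $X_k=H_k(W_\varepsilon,I_\varepsilon^r)$, $Y_k=H_k(W_\varepsilon,I_\varepsilon^{b_\varepsilon^*})$, Corollary \ref{t} yields $X_k=0$ for $k\neq 1$, $X_1=\mathbb F$, and $A_0=0$. The triple's exact sequence
\[
A_k\xrightarrow{j_k}X_k\longrightarrow Y_k\xrightarrow{\partial}A_{k-1}\xrightarrow{j_{k-1}}X_{k-1}
\]
then forces $Y_0=Y_1=0$, $Y_2=\ker(j_1)$ of dimension $\dim A_1-1$, and $Y_k\cong A_{k-1}$ for $k\geq 3$.

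Summing these contributions and using $\mathcal P_t(A)=t(\mathcal P_t(M)+\mathcal Q(t))$ together with $\dim A_0=0$, I obtain
\[
\mathcal P_t(W_\varepsilon,I_\varepsilon^{b_\varepsilon^*})=t^{2}(\dim A_1-1)+\sum_{k\geq 3}t^{k}\dim A_{k-1}=t\,\mathcal P_t(A)-t^{2}=t^{2}\bigl(\mathcal P_t(M)+\mathcal Q(t)-1\bigr),
\]
and the polynomial $\mathcal P_t(M)+\mathcal Q(t)-1$ lies in $\mathbb P$, since $\dim H_0(M)\geq 1$ and $\mathcal Q\in\mathbb P$.

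The main obstacle is precisely the naturality claim invoked in the first step: the previous lemma only records the equality of Poincar\'e polynomials, so one has to revisit its proof (via \cite{BC}) and check that the isomorphism between relative homology of the sublevel pairs and the Nehari homology is induced by maps compatible with inclusions. Once this is granted, the argument is a purely algebraic bookkeeping in the long exact sequence.
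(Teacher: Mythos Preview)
Your argument is correct and is precisely the ``purely algebraic'' proof the paper defers to \cite[Lemma~5.6]{BC}: one runs the long exact sequence of the triple $(W_\varepsilon, I_\varepsilon^{b_\varepsilon^{*}}, I_\varepsilon^{r})$, uses Corollary~\ref{t}, and the only nontrivial input is the surjectivity of $j_1$, obtained exactly as you do by identifying it (via the natural radial retraction underlying \eqref{PtP}) with the map $H_0(\mathcal N_\varepsilon^{b_\varepsilon^{*}})\to H_0(\mathcal N_\varepsilon)\cong\mathbb F$. Your caveat about naturality is well placed but harmless: in \cite{BC} the isomorphism \eqref{PtP} is realized by explicit deformation retractions that manifestly commute with the inclusions $\mathcal N_\varepsilon^{a}\hookrightarrow\mathcal N_\varepsilon^{a'}$ for $a\le a'$.
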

\begin{proof}
The proof is purely algebraic and goes exactly as in \cite[Lemma 5.6]{BC}, see also \cite[Lemma 2.4]{claudianorserginhorodrigo}.
\end{proof}

As a consequence of these facts we have

\begin{coro}\label{separato}
Suppose that 
the set $\mathcal K_{\varepsilon}$
is discrete. Then
\begin{eqnarray*}\label{1}
\sum_{u\in \mathcal K^{b^{*}}_{\varepsilon}}\mathcal I_{t}(u)=t\Big(\mathcal P_{t}(M)+\mathcal Q(t)\Big)+(1+t)\mathcal Q_{1}(t)
\end{eqnarray*}
and 
\begin{eqnarray*}
\sum_{u\in(\mathcal K_{\varepsilon})_{b^{*}}} \mathcal I_{t}(u)=t^{2}\Big(\mathcal P_{t}(M)+\mathcal Q(t)-1\Big)+(1+t)\mathcal Q_{2}(t),
\end{eqnarray*}
where $\mathcal Q,\mathcal Q_{1}, \mathcal Q_{2}\in \mathbb P.$

\end{coro}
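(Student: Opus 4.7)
The strategy is to invoke the abstract Morse relation twice: once for the pair of sublevels $(I_{\varepsilon}^{b_{\varepsilon}^{*}}, I_{\varepsilon}^{r})$ and once for $(W_{\varepsilon}, I_{\varepsilon}^{b_{\varepsilon}^{*}})$, and then read off the relative Poincar\'e polynomials from Corollary \ref{t} and Lemma \ref{quadrato}. The preliminary verifications needed to apply Morse theory on the Hilbert space $W_{\varepsilon}$ have essentially been carried out at the beginning of this section: $I_{\varepsilon}$ is of class $C^{2}$, the operator $L_{\varepsilon}(u)$ representing $I_{\varepsilon}''(u)$ is a compact perturbation of the Riesz isomorphism, and the Palais-Smale condition holds (by Proposition \ref{gio11} together with the choice of $\varepsilon^{*}$ at the end of Section \ref{Bary}). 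Under the assumption that $\mathcal K_{\varepsilon}$ is discrete, the polynomial Morse indices $\mathcal I_{t}(u)$ are well defined, and the standard Morse relation gives, for any pair of regular values $a<b$,
$$\sum_{u\in \mathcal K_{\varepsilon},\ a\leq I_{\varepsilon}(u)\leq b} \mathcal I_{t}(u) = \mathcal P_{t}\bigl(I_{\varepsilon}^{b}, I_{\varepsilon}^{a}\bigr) + (1+t)\,\mathcal Q(t)$$
for some $\mathcal Q\in \mathbb P$.

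For the first identity, I would fix $r\in (0, m(V_{0}))$ (a regular value, since the mountain pass geometry of Lemma \ref{geometriadamontanha} ensures any nontrivial critical point of $I_{\varepsilon}$ lies at level $\geq c_{\varepsilon}\geq m(V_{0}) > r$, so the only critical point in $I_{\varepsilon}^{r}$ is $u=0$, which is a strict local minimum). Applying the Morse relation to the pair $(I_{\varepsilon}^{b_{\varepsilon}^{*}}, I_{\varepsilon}^{r})$ one obtains
$$\sum_{u\in \mathcal K_{\varepsilon}^{b^{*}}} \mathcal I_{t}(u) \;=\; \mathcal P_{t}\bigl(I_{\varepsilon}^{b_{\varepsilon}^{*}}, I_{\varepsilon}^{r}\bigr) \,+\, (1+t)\mathcal Q_{1}(t),$$
the left-hand side not containing the trivial critical point since $I_{\varepsilon}(0)=0<r$. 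Substituting $\mathcal P_{t}(I_{\varepsilon}^{b_{\varepsilon}^{*}}, I_{\varepsilon}^{r}) = t(\mathcal P_{t}(M)+\mathcal Q(t))$ from Corollary \ref{t} delivers the first claim.

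For the second identity, I would apply the same Morse relation to the pair $(W_{\varepsilon}, I_{\varepsilon}^{b_{\varepsilon}^{*}})$. Since $b_{\varepsilon}^{*}$ is a regular value and $W_{\varepsilon}$ plays the role of $I_{\varepsilon}^{+\infty}$, the relation reads
$$\sum_{u\in (\mathcal K_{\varepsilon})_{b^{*}}} \mathcal I_{t}(u) \;=\; \mathcal P_{t}\bigl(W_{\varepsilon}, I_{\varepsilon}^{b_{\varepsilon}^{*}}\bigr) \,+\, (1+t)\mathcal Q_{2}(t),$$
and substituting the value $t^{2}(\mathcal P_{t}(M)+\mathcal Q(t)-1)$ supplied by Lemma \ref{quadrato} gives the second claim.

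\textbf{Main obstacle.} The substantive analytical input has been isolated in the preceding results; once those are in hand, both identities reduce to one application each of the Morse relation. The only delicate point is to ensure that the Morse relation is legitimately applicable up to the energy levels considered in the second identity. When $V_{\infty}=+\infty$ this is immediate from the full Palais-Smale property; when $V_{\infty}<\infty$, Palais-Smale may fail at $m(V_{\infty})$, so one has to argue that the deformation/excision used in the Morse machinery still applies at arbitrary levels above $b_{\varepsilon}^{*}$ (for instance by truncating at a regular value and letting it tend to $+\infty$, or by exploiting the structure of the $(PS)_{c}$ failure through the splitting argument behind Proposition \ref{gio11}). This is the only place where care beyond routine verification is required.
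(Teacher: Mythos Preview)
Your proposal is correct and follows exactly the paper's approach: apply the Morse relation to the pairs $(I_{\varepsilon}^{b_{\varepsilon}^{*}}, I_{\varepsilon}^{r})$ and $(W_{\varepsilon}, I_{\varepsilon}^{b_{\varepsilon}^{*}})$, then substitute the Poincar\'e polynomials from Corollary~\ref{t} and Lemma~\ref{quadrato}. Your additional care about the validity of Palais--Smale at high levels when $V_{\infty}<\infty$ is a legitimate observation that the paper itself does not address explicitly.
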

\begin{proof}
Indeed the Morse theory 
gives
\begin{eqnarray*}\label{1}
\sum_{u\in \mathcal K^{b_{\varepsilon}^{*}}_{\varepsilon}}\mathcal I_{t}(u)=\mathcal P_{t}(I_{\varepsilon}^{b_{\varepsilon}^{*}}, I_{\varepsilon}^{r})+(1+t)\mathcal Q_{1}(t)\\
\end{eqnarray*}
and 
\begin{eqnarray*}
\sum_{u\in(\mathcal K_{\varepsilon})_{b_{\varepsilon}^{*}}} \mathcal I_{t}(u)=
\mathcal P_{t}(W_{\varepsilon},I_{\varepsilon}^{b_{\varepsilon}^{*}})+(1+t)\mathcal Q_{2}(t)
\end{eqnarray*}
so that, by using  Corollary \ref{t} and Lemma \ref{quadrato}, we easily conclude
\end{proof}

Finally, by Corollary \ref{separato} we get
$$\sum_{u\in \mathcal K_{\varepsilon}}\mathcal I_{t}(u)=t\mathcal P_{t}(M)+t^{2}\Big(\mathcal P_{t}(M)-1\Big)+t(1+t)\mathcal Q(t)$$
for some $\mathcal Q\in \mathbb P.$ 
We easily deduce that, if the critical points of $I_{\varepsilon}$
are non-degenerate, then they are  at least $2\mathcal P_{1}(M)-1$,
if counted with their multiplicity.

The proof of Theorem \ref{gioGae2} is thereby complete.
\medskip



\end{document}